\documentclass[11pt]{article}
\usepackage{amsfonts}
\usepackage{amssymb}
\usepackage{amsthm}
\usepackage{amsmath}
\usepackage{bm}
\usepackage{amscd} 

\usepackage{tikz}
\usetikzlibrary{matrix}

\usepackage{hyperref}

\numberwithin{equation}{section} \DeclareMathSizes{2}{10}{12}{13}
\parindent=0.0in

\makeatletter
\newcommand*{\doublerightleftarrow}[2]{\mathrel{
  \settowidth{\@tempdima}{$\scriptstyle#1$}
  \settowidth{\@tempdimb}{$\scriptstyle#2$}
  \ifdim\@tempdimb>\@tempdima \@tempdima=\@tempdimb\fi
  \mathop{\vcenter{
    \offinterlineskip\ialign{\hbox to\dimexpr\@tempdima+1em{##}\cr
    \rightarrowfill\cr\noalign{\kern.5ex}
    \leftarrowfill\cr}}}\limits^{\!#1}_{\!#2}}}
\makeatother

 \textwidth 6.5in
\textheight 8.5in

\topmargin -0.3in

\oddsidemargin -0.1in
\evensidemargin -0.5in

\newtheorem{thm}{Proposition}[section]

\newtheorem{Thm}[thm]{Theorem}

\newtheorem{lem}[thm]{Lemma}
\newtheorem{defn}[thm]{Definition}

\title{On Noetherian schemes over $(\mathcal C,\otimes,1)$ and the category of quasi-coherent sheaves}
\author{Abhishek Banerjee}

\date{}

\begin{document}

\maketitle

\centerline{\small \emph{Institut des Hautes \'{E}tudes Scientifiques, Le Bois-Marie 35, route de Chartres, }}
\centerline{\small \emph{91440, Bures-sur-Yvette, France. Email: abhishekbanerjee1313@gmail.com.}}

\medskip
\begin{abstract} Let $(\mathcal C,\otimes,1)$ be an abelian symmetric monoidal category satisfying certain conditions
and let $X$ be a  scheme over $(\mathcal C,\otimes,1)$ in the sense of To\"{e}n and Vaqui\'{e}. In this paper we show 
that when $X$ is quasi-compact and semi-separated,  any quasi-coherent sheaf on $X$ may be expressed as a directed
colimit of its finitely generated quasi-coherent submodules. Thereafter, we introduce a notion of ``field objects'' in
$(\mathcal C,\otimes,1)$ that satisfy several properties similar to those of fields in usual commutative algebra. Finally
we show that the points of a Noetherian, quasi-compact and semi-separated scheme $X$ over such a field
object $K$ in $(\mathcal C,\otimes,1)$ can be recovered from  certain kinds of functors between categories
of quasi-coherent sheaves. The latter is a partial generalization of some recent results of Brandenburg and
Chirvasitu. 
\end{abstract}

\medskip
{\bf MSC(2010) Subject classification: 14A15, 19D23.}

\medskip

\section{Introduction}

\medskip

\medskip
Let $(\mathcal C,\otimes,1)$ be an abelian symmetric monoidal category satisfying certain conditions. Then, the idea
of doing algebraic geometry over the category $\mathcal C$ has been developed by several authors; see, for instance,
Deligne \cite{Del}, Hakim \cite{Hak} and the work of To\"{e}n and Vaqui\'{e} \cite{TV}. When $\mathcal C=k-Mod$, the category
of modules over an ordinary commutative ring $k$, we recover the usual algebraic geometry of schemes 
over $Spec(k)$. In general, a more abstract theory of schemes 
(and monoid objects) using 
categories is the entry point to, for instance, the derived algebraic
geometry of Lurie \cite{Lurie} and the homotopical algebraic
geometry of To\"{e}n and Vezzosi (see \cite{TVz1}, \cite{TVz2}). For an 
abstract treatment of monoid objects in abelian model categories, 
see, for instance Hovey \cite{Hoveyp}. Further, the Morita theory
for monoids in symmetric monoidal categories has been developed
by Vitale \cite{Vitale}. The algebraic geometry over symmetric monoidal categories is also
a stepping stone to the study of schemes over ``the field with one element''
$\mathbb F_1$ (for more on the geometry over $\mathbb F_1$, we refer the reader for example to the work of Connes and Consani \cite{CC1}, \cite{CC2}, \cite{ACKC}, Deitmar \cite{Die} and Soul\'{e} \cite{Soule}, \cite{Soule2}). In
the last few years, there has also been a lot of interest in the theory
of monoid schemes (see Corti\~{n}as, Haesemeyer, Walker and Weibel \cite{CHWW},
Flores and Weibel \cite{FW}, Pirashvili \cite{IP} and Vezzani \cite{Vezzani}).

\medskip
In this paper, we will work with the notion of a scheme over $(\mathcal C,\otimes,1)$
due to To\"{e}n and Vaqui\'{e} \cite{TV}. For most of this article, we will also assume that the abelian symmetric
monoidal category $\mathcal C$ is also ``locally finitely generated''.  The theory of   locally finitely generated abelian  categories   has been studied extensively in the literature 
(see, for example,  \cite{Gar}, \cite{Prest}, \cite{Prest3}, \cite{Prest2}, \cite{Sten}).  For a scheme $X$ over
$(\mathcal C,\otimes,1)$, the purpose
of this paper is to consider the following two questions on the category $QCoh(X)$ of quasi-coherent sheaves on $X$:

\medskip
(Q1) For a scheme $X$ over $(\mathcal C,\otimes,1)$, can a quasi-coherent sheaf on $X$ be expressed as a colimit of its finitely generated quasi-coherent submodules? 

\medskip We show that when $X$ 
is quasi-compact and semi-separated (in the sense of Definition \ref{D2.2}),  this is indeed true, i.e., any quasi-coherent sheaf on $X$ can be expressed as a directed colimit of finitely generated
quasi-coherent submodules (see Theorem \ref{Thm3.8}). For ordinary schemes over
a field, similar results have been studied in the classical texts (see EGA I \cite{EGA1}, \cite{EGA1b}). More recently, Rydh \cite{Rydh}, \cite{Rydh2} has tackled
similar questions for algebraic stacks.

\medskip
The second question we consider in this paper concerns the points of a scheme over
a ``field object'' $K$ in $(\mathcal C,\otimes,1)$. In Definition \ref{D4.2}, we have introduced a notion of ``field objects'' in the symmetric monoidal category
$(\mathcal C,\otimes,1)$ that we believe is of independent interest. It is shown that for a field object $K$ 
over $(\mathcal C,\otimes,1)$, the category
$K-Mod$ of $K$-modules   satisfies several properties analogous to the category of vector spaces over a field. Thereafter, we ask the following question: 

\medskip 
(Q2) For a scheme $X$ over $(\mathcal C,\otimes,1)$, under what conditions does
a functor $F:QCoh(X)\longrightarrow QCoh(Spec(K))=K-Mod$ to the category
 of $K$-modules correspond to a pullback $F\cong f^*$ by a morphism
$f:Spec(K)\longrightarrow X$?

\medskip
For a quasi-compact, semi-separated and 
Noetherian scheme $X$, we show that any cocontinuous, symmetric monoidal and normal functor $F:QCoh(X)
\longrightarrow K-Mod$ corresponds to a point of $X$ over the field object $K$ (see Theorems  \ref{P5.13wvo} and
\ref{finres}). The notion of a normal
functor $F:QCoh(X)
\longrightarrow K-Mod$ introduced in 
Definition \ref{NDef}  from the category of quasi-coherent sheaves  is an extension of the notion of normal functors
between categories of modules in \cite{Vitale}.

\medskip
We now describe the structure of the paper in greater detail. We let $Comm(\mathcal C)$ be the category of unital
commutative monoid objects in $\mathcal C$. For any $A\in Comm(\mathcal C)$, we let $A-Mod$ be the category
of $A$-modules in $\mathcal C$. Then, we let $Aff_{\mathcal C}:=Comm(\mathcal C)^{op}$
be the category of affine schemes over $\mathcal C$. In particular, the affine scheme corresponding to a commutative
monoid object $A\in Comm(\mathcal C)$ is denoted by $Spec(A)$.  In Section 2, we briefly recall the notion of a scheme
over $(\mathcal C,\otimes,1)$ due to To\"{e}n and Vaqui\'{e} \cite{TV}. We also recall from \cite{AB1} and \cite{AB2}
the notion of quasi-coherent sheaf for schemes over $(\mathcal C,\otimes,1)$ as well as the corresponding formalism of pullback
and pushforward functors. Thereafter, in Section 3, we assume that every object in $\mathcal C$ can be expressed
as a directed colimit of finitely generated subobjects. We show that this is equivalent to every $A$-module $M$ being
isomorphic to the directed colimit of its finitely generated $A$-submodules.  Then, the main result of Section 3 is the following (see Theorem \ref{Thm3.8}).

\begin{Thm} Let $X$ be a quasi-compact and semi-separated scheme over $(\mathcal C,\otimes,1)$ and let 
$\mathcal M$ be a quasi-coherent sheaf on $X$. Then, $\mathcal M$ can be expressed as a filtered direct limit
of its finitely generated quasi-coherent submodules. 
\end{Thm}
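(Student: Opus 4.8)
The plan is to reduce the global statement to the affine case, where it follows from the hypothesis on $\mathcal C$ recalled at the start of Section 3 (every object is a directed colimit of its finitely generated subobjects, equivalently every $A$-module is the directed colimit of its finitely generated $A$-submodules). Fix a finite affine cover $X=\bigcup_{i=1}^n U_i$ with $U_i=Spec(A_i)$; since $X$ is semi-separated, each intersection $U_i\cap U_j$ is again affine, say $Spec(A_{ij})$, and this is what makes the gluing argument work. Write $\mathcal M_i:=\mathcal M|_{U_i}$, an $A_i$-module, and by the affine case $\mathcal M_i=\mathop{\mathrm{colim}}_{\alpha} N_{i,\alpha}$ as a filtered colimit of finitely generated $A_i$-submodules.

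\textbf{Gluing the local pieces.} The subtlety is that a finitely generated submodule of $\mathcal M_i$ on $U_i$ need not extend to a finitely generated quasi-coherent submodule of $\mathcal M$ on all of $X$; one must enlarge it. The key step is a lemma of the following shape: given a finitely generated quasi-coherent $\mathcal O_U$-submodule $\mathcal N\subseteq \mathcal M|_U$ on a quasi-compact open $U\subseteq X$, there exists a finitely generated quasi-coherent $\mathcal O_X$-submodule $\mathcal N'\subseteq\mathcal M$ with $\mathcal N'|_U\supseteq\mathcal N$. To prove this, I would induct on the number of charts $U_i$ needed to cover $X$ outside $U$. For the one-step extension across a single affine chart $U_j=Spec(A_j)$, one restricts $\mathcal N$ to $U\cap U_j$ (affine, by semi-separatedness), where it is a finitely generated $A_{\cdot j}$-module hence generated by finitely many sections; each such section is a fraction, so after clearing denominators it extends to a finitely generated $A_j$-submodule of $\mathcal M_j$, and one then takes the quasi-coherent submodule of $\mathcal M$ generated by $\mathcal N$ together with this extension. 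Iterating over the finitely many charts produces $\mathcal N'$.

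\textbf{Assembling the colimit.} With this extension lemma in hand, one forms the family $\mathcal F$ of \emph{all} finitely generated quasi-coherent submodules of $\mathcal M$, ordered by inclusion. It is directed: the sum of two finitely generated quasi-coherent submodules is again finitely generated and quasi-coherent. It remains to check that $\mathop{\mathrm{colim}}_{\mathcal N\in\mathcal F}\mathcal N=\mathcal M$. Since filtered colimits in $QCoh(X)$ are computed locally (this uses quasi-compactness of $X$ and that each $U_i$ is affine, so $QCoh(U_i)=A_i\text{-}Mod$ and filtered colimits there are the underlying module colimits), it suffices to verify surjectivity onto each $\mathcal M_i$. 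Given a section $s$ of $\mathcal M$ over $U_i$, it lies in some finitely generated $A_i$-submodule of $\mathcal M_i$ by the affine case; the extension lemma promotes that to a finitely generated quasi-coherent submodule of $\mathcal M$ whose restriction to $U_i$ contains $s$. Hence the colimit surjects onto $\mathcal M_i$ for every $i$, and being a subobject it equals $\mathcal M$.

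\textbf{Main obstacle.} The technical heart is the extension lemma, and within it the bookkeeping of ``clearing denominators'': one must make sure that passing from a finitely generated module over the localization $A_{ij}$ to a finitely generated module over $A_j$ can be done compatibly over all overlaps simultaneously, and that the resulting submodule of $\mathcal M$ is still finitely generated rather than merely a filtered union. In the classical setting this is routine, but over $(\mathcal C,\otimes,1)$ one must invoke the To\"{e}n--Vaqui\'{e} formalism for localizations and the description of quasi-coherent sheaves recalled in Section 2, and check that ``finitely generated'' is preserved under the relevant pushforward along an affine open immersion — which is exactly where semi-separatedness (affineness of the overlaps) is essential.
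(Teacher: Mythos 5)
Your overall architecture coincides with the paper's: reduce to the affine case via condition (C1), prove an extension lemma for finitely generated quasi-coherent submodules from an open $U$ to all of $X$, and then show that the directed family of \emph{all} finitely generated quasi-coherent submodules (directed because finite sums of finitely generated submodules are finitely generated, Proposition \ref{P3.2ct}) has colimit $\mathcal M$ by exhibiting the canonical monomorphism as also being an epimorphism. The final assembly step is essentially identical to the paper's proof of Theorem \ref{Thm3.8}, and your weakening of the extension lemma to $\mathcal N'|_U\supseteq\mathcal N$ would indeed suffice there.

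The genuine gap is in your proof of the extension lemma itself. The one-step extension "restrict $\mathcal N$ to $U\cap U_j$, where it is generated by finitely many sections; each section is a fraction, so after clearing denominators it extends" is an element-theoretic argument that is not available over $(\mathcal C,\otimes,1)$: objects of $A$-$Mod$ have no underlying sets of elements, "finitely generated" is defined purely by a compactness property with respect to filtered systems of monomorphisms, and a Zariski open immersion $Spec(B)\to Spec(A)$ in the To\"{e}n--Vaqui\'{e} sense is merely a flat epimorphism of finite presentation in $Comm(\mathcal C)$ --- it is not a localization at elements, so there are no denominators to clear. The paper replaces this step by a categorical construction (Proposition \ref{P3.3}): for a quasi-compact open immersion $i:U\to Spec(A)$ and a finitely generated $\mathcal N\subseteq i^*\mathcal M$, one forms the quasi-coherent submodule $\overline{\mathcal N}:=lim(\mathcal M\to i_*i^*\mathcal M\leftarrow i_*\mathcal N)$ of $\mathcal M$, uses the isomorphism $i^*i_*\cong 1$ (Proposition \ref{P2.5qp}, which requires quasi-compactness and semi-separatedness of $i$ so that $i_*$ exists and preserves quasi-coherence) to see that $\overline{\mathcal N}|_U=\mathcal N$, writes $\overline{\mathcal N}$ as a filtered union of finitely generated submodules over the affine base via (C1), and then invokes the finite generation of $\mathcal N$ on the quasi-compact $U$ (Lemma \ref{L3.03}) to select a single finitely generated piece restricting to $\mathcal N$. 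The induction over the finitely many charts is then organized via the partial unions $X_m=Y_m/R_m$ of Proposition \ref{P3.5qm}, with Lemma \ref{L3.3xpt} guaranteeing that the glued submodule is still finitely generated. Without some substitute of this kind, your one-step extension --- the step you yourself identify as the technical heart --- does not go through.
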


\medskip
We start considering Noetherian schemes in Section 4. We say that a commutative monoid object $A$ is Noetherian if
every finitely generated $A$-module $M$ is finitely presented, i.e., can be expressed as a colimit
\begin{equation}
M\cong colim(0\longleftarrow A^m\overset{q}{\longrightarrow}A^n)
\end{equation} for some morphism $q:A^m\longrightarrow A^n$ with $m$, $n\geq 1$ (see Definition \ref{Def4.1}). Then, our first
result is that if $A$ is a Noetherian commutative monoid object, $Spec(A)$ is a Noetherian scheme, i.e., if $A\longrightarrow B$
is a morphism in $Comm(\mathcal C)$ inducing a Zariski open immersion $Spec(B)\longrightarrow Spec(A)$ of affine schemes, 
$B\in Comm(\mathcal C)$ must also be Noetherian (see Proposition \ref{P4.2xr}). The key notion in Section 4 is that of a ``field object'': we say that a Noetherian commutative
monoid object $0\ne K\in Comm(\mathcal C)$ is a ``field object'' in $(\mathcal C,\otimes,1)$ if it has no subobjects other than
$0$ and $K$ in $K-Mod$ (see Definition \ref{D4.2}). We believe that this notion
is of independent interest and hope that it would be a first step towards developing Galois theory for schemes over a symmetric monoidal category. 
In order to justify our definition as the correct notion for a field in a symmetric monoidal category, the rest of Section 4 is devoted to showing that the category $K-Mod$ of modules over a field
object $K$ has several properties similar to the category of vector spaces over a field. We prove the following succession of results on $K-Mod$, each property being utilized to prove the next. 

\medskip
\begin{Thm} Let $K$ be a field object in $(\mathcal C,\otimes,1)$ in the sense of 
Definition \ref{D4.2}. Then, we show the following: 

\smallskip
(a) For any $n\geq 1$,  $K^n$ is a projective object of $K-Mod$  and any morphism of finitely presented $K$-modules can be lifted to a morphism
of their presentations. 

\smallskip
(b) Every monomorphism (resp. epimorphism) in $K-Mod$ is a split monomorphism 
(resp. split epimorphism).

\smallskip
(c) Any finitely generated (non-zero) $K$-module is  isomorphic to a direct
sum $K^m$ for some $m\geq 1$. 

\smallskip 
(d) The corresponding
affine scheme $Spec(K)$ behaves in a way similar to a space with a single point, i.e., any non-trivial Zariski open immersion $U\longrightarrow Spec(K)$
must be an isomorphism. 

\end{Thm}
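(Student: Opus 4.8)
The plan is to derive everything from one algebraic consequence of the field-object hypothesis, namely that $k:=\mathrm{End}_{K-Mod}(K)$ is a field, and then to bootstrap through (a)--(d) in the stated order. First I would note that $K$ is the free $K$-module on the unit object $1$, so $\mathrm{Hom}_{K-Mod}(K,M)\cong\mathrm{Hom}_{\mathcal C}(1,M)$ naturally and $k\cong\mathrm{Hom}_{\mathcal C}(1,K)$ is a commutative ring (commutative because $\otimes$ is symmetric and $K$ is a commutative monoid object). The key point: for $0\ne a\colon K\to K$, both $\ker(a)$ and $\mathrm{im}(a)$ are subobjects of $K$ in $K-Mod$, hence each is $0$ or $K$; since $a\ne 0$ and $K\ne 0$ this gives $\ker(a)=0$ and $\mathrm{im}(a)=K$, so $a$ is an isomorphism. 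Thus every nonzero element of $k$ is invertible, $\mathrm{Hom}_{K-Mod}(K^m,K^n)$ is the set of $n\times m$ matrices over the field $k$ with composition given by matrix multiplication, and every monomorphism (resp. epimorphism) between finite free $K$-modules corresponds to a left-invertible (resp. right-invertible) matrix and is therefore a split monomorphism (resp. split epimorphism).

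For (a), since finite direct sums of projectives are projective it suffices to prove $K$ projective, and since the pullback of an epimorphism is again an epimorphism in the abelian category $K-Mod$, it suffices to split every epimorphism $q\colon P\to K$. Using that $P$ is the directed colimit of its finitely generated submodules $P_0$ (the local finite generation hypothesis, in force from Section 3 onward) together with the fact that each image $q(P_0)$ is a subobject of $K$, some $P_0$ must satisfy $q(P_0)=K$ (since $q(P)=K$ is the directed union of the $q(P_0)$, each of which is $0$ or $K$); writing this $P_0$ as a quotient $K^n\twoheadrightarrow P_0$ and composing with $q|_{P_0}$, we reduce to splitting an epimorphism $K^n\to K$, a splitting of which induces one of $q$. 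But such an epimorphism cannot have all $n$ of its components zero---otherwise it is the zero map, which is not epic onto the nonzero object $K$---and a nonzero component is invertible by the above, giving the splitting. The remaining assertion of (a) is the usual diagram chase: for $\phi\colon M\to N$ between finitely presented modules with presentations $K^a\to K^b\to M\to 0$ and $K^c\to K^d\to N\to 0$, projectivity of $K^b$ lifts $K^b\to M\xrightarrow{\phi}N$ to $K^b\to K^d$, and projectivity of $K^a$ lifts $K^a\to K^b\to K^d$ through the epimorphism $K^c\to\mathrm{im}(K^c\to K^d)$.

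For (b) and (c) the engine is the claim, proved by induction on $n$, that every subobject $A\subseteq K^n$ is a direct summand and is isomorphic to $K^j$ for some $j\le n$: in the inductive step one forms $A\cap K^{n-1}$, observes that $A/(A\cap K^{n-1})$ embeds into $K$ hence is $0$ or $K$, splits the extension $0\to A\cap K^{n-1}\to A\to K\to 0$ via (a) in the nontrivial case, and then assembles an explicit retraction $K^n\to A$ out of the inductively given retraction onto $A\cap K^{n-1}$ and the chosen splitting of $K$. Granting this, (c) is immediate: a nonzero finitely generated $M$ is a quotient $K^n\twoheadrightarrow M$ whose kernel, being a direct summand of $K^n$, splits off, so $M$ is a nonzero direct summand of $K^n$ and hence $\cong K^m$ with $m\ge 1$. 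For (b) in full generality I would upgrade this to: every $K$-module $M$ is free. Among the families of subobjects of $M$ each isomorphic to $K$ whose sum in $M$ is direct---a finitary condition, since filtered colimits are exact and so the kernel of $\bigoplus_\alpha K_\alpha\to M$ is the union of the kernels of its finite sub-coproducts---Zorn's lemma yields a maximal one, with direct sum $F\cong K^{(I)}\hookrightarrow M$. If $F\ne M$, then $M/F$ contains a nonzero finitely generated submodule, which by (c) contains a copy of $K$; pulling this back to $M$ gives an extension of $K$ by $F$ which splits by (a), and since the resulting copy of $K$ in $M$ together with $F$ has direct sum a submodule of $M$, the family can be enlarged, contradicting maximality. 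Hence $M=F$ is free; as arbitrary coproducts of projectives are projective in a cocomplete abelian category, every object of $K-Mod$ is projective, every short exact sequence splits, and (b) follows.

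For (d), let $j\colon U\to Spec(K)$ be a Zariski open immersion with $U$ nonempty; in particular $j$ is a monomorphism of schemes. Being a nonempty scheme, $U$ contains a nonempty affine open $Spec(B)$ with $0\ne B\in Comm(\mathcal C)$, and the composite $Spec(B)\hookrightarrow U\xrightarrow{j}Spec(K)$ corresponds to a monoid map $K\to B$. By the structure theory above $B$ is free as a $K$-module, say $B\cong K^{(S)}$ with $S\ne\emptyset$ since $B\ne 0$; hence $B\otimes_K-\cong(-)^{(S)}$ is exact and faithful, so $K\to B$ is faithfully flat and $Spec(B)\to Spec(K)$ is a faithfully flat cover factoring through the monomorphism $j$. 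Base-changing $j$ along this cover gives a monomorphism with a section (supplied by the factorization), hence an isomorphism, and therefore $j$ is an isomorphism---either by faithfully flat descent in the formalism of \cite{TV}, or directly because a faithfully flat morphism is surjective, so that the open $U$ through which it factors is already all of $Spec(K)$. I expect the real work to lie, on one hand, in the inductive construction of the retraction and the finitary Zorn argument behind (b) and (c), and on the other hand, in (d), in fitting the slogan ``faithfully flat plus factoring through a monomorphism forces an isomorphism'' into the scheme language of To\"{e}n and Vaqui\'{e}; once the linear algebra of $K-Mod$ over the field $k$ is established, the field-object input itself is completely elementary.
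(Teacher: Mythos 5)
Your proof is correct, but it follows a genuinely different route from the paper's for most of the statement. The paper's engine is a Yoneda bootstrap built on Lemma \ref{Lem5.4lg} (an epimorphism $L\to K$ induces a surjection on $Hom_{K-Mod}(K,-)$, proved by contradiction through the Noetherian presentations and filtered colimits): projectivity of $K$ comes from factoring $f:K\to T$ through its image, and both the splitting of monomorphisms and the freeness of finitely generated modules are obtained by first producing the desired (iso)morphism at the level of $Hom_{K-Mod}(K,-)$, extending it to finitely presented modules via presentations (using part (a) for independence of the presentation), then to all modules by colimits, and finally invoking Yoneda. You instead prove projectivity of $K$ directly, by locating a finitely generated submodule of $P$ surjecting onto $K$ and splitting the resulting epimorphism $K^n\to K$ via an invertible matrix entry; you then establish a structure theorem (every subobject of $K^n$ is a summand isomorphic to some $K^j$) by induction, and upgrade it by Zorn's lemma to the statement that \emph{every} $K$-module is free, from which (b) and (c) fall out at once. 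This is the classical division-ring argument transported to $(\mathcal C,\otimes,1)$; it uses the same ambient hypotheses the paper has in force (condition (C1), exactness of filtered colimits, and the Noetherian presentation of finitely generated modules), avoids the repeated Yoneda bootstrap, and yields a strictly stronger conclusion (all objects of $K-Mod$ are projective). For (d) the paper stays inside $K-Mod$, tensoring the exact sequence $0\to K\to A\to A/K\to 0$ with the flat epimorphism $A$ and using Lemma \ref{Read1} to kill $A\otimes_K(A/K)$; you instead argue that $B\cong K^{(S)}$ is faithfully flat, so $\{Spec(B)\to Spec(K)\}$ is a single-element Zariski covering in the sense of \cite[D\'efinition 2.10]{TV}, hence an epimorphism in $Sh(Aff_{\mathcal C})$ (the site being subcanonical), which cannot factor through the proper monomorphism $U\to Spec(K)$ unless the latter is an isomorphism. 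That step is the one place where you should make the translation explicit -- ``faithfully flat implies surjective'' has no primitive meaning here and must be cashed out as ``conservative base change makes a one-element family a covering, and coverings are epimorphisms of sheaves'' -- but the justification you gesture at is the right one, and the argument is sound.
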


\medskip Finally, given a Noetherian commutative monoid object $A$ such that $Hom_{A-Mod}(A,A)$ is an integral
domain, we describe in Proposition \ref{Final4} a process of localizing $A$ (under some conditions) to obtain a field object $K(A)$ in $(\mathcal C,\otimes,1)$. This is the analogue of the usual
contruction of the field of fractions of an integral domain. 

\medskip
In Section 5, we consider the points of a Noetherian, quasi-compact and semi-separated scheme $X$ over a field object $K$, i.e., 
morphisms $f:Spec(K)\longrightarrow X$. It is clear that any such morphism $f$ defines a pullback functor
$f^*:QCoh(X)\longrightarrow QCoh(Spec(K))=K-Mod$ that is cocontinuous (i.e., preserves small colimits) and preserves
the symmetric monoidal structure. As such, it is natural to ask if the converse is true, i.e., whether any cocontinuous
symmetric monoidal functor $F:QCoh(X)\longrightarrow K-Mod$ can be described as $F\cong f^*$ for some
morphism $f:Spec(K)\longrightarrow X$. For usual schemes, Brandenburg and Chirvasitu \cite{BC} have shown that this is indeed the case. In fact,
it is shown in \cite{BC} that any cocontinuous symmetric monoidal functor $F:QCoh(X)\longrightarrow QCoh(Y)$ with $X$ being a quasi-compact
and quasi-separated scheme can be described as a pullback   along some morphism $f:Y\longrightarrow X$. Let  $A$, $B$ be commutative
monoid objects of $(\mathcal C,\otimes,1)$ with $A$ Noetherian. Then, if  $F:A-Mod
\longrightarrow B-Mod$ is a functor that is not only cocontinuous and symmetric monoidal but also normal in the sense of 
Vitale \cite[$\S$ 4]{Vitale}, we show in Proposition \ref{TYpcf} that $F$ corresponds to ``extension of scalars'' along some morphism $A\longrightarrow B$
in $Comm(\mathcal C)$. This suggests that we should introduce a suitable notion of normal functor from
$QCoh(X)$ to $K-Mod$. This is done in Definition \ref{NDef}. Then, the main result of 
Section 5 is the following (see Theorems \ref{P5.13wvo} and \ref{finres}).

\begin{Thm} Let $X$ be a quasi-compact, semi-separated and Noetherian scheme over $(\mathcal C,\otimes,1)$. 
Let $K$ be a field object of $(\mathcal C,\otimes,1)$ and $F:QCoh(X)\longrightarrow K-Mod$ be a cocontinuous
symmetric monoidal functor that is also normal. Then, there exists a morphism $f:Spec(K)\longrightarrow X$  such that 
$F\cong f^*$.

\smallskip
Conversely, the pullback functor 
$f^*:QCoh(X)\longrightarrow K-Mod$ corresponding to a morphism $f:Spec(K)
\longrightarrow X$ is not only cocontinuous and symmetric monoidal, but also a normal functor in the sense
of Definition \ref{NDef}.  
\end{Thm}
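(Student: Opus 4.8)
The plan is to reduce the statement about $QCoh(X)$ to the affine case treated in Proposition \ref{TYpcf}, using the quasi-compact semi-separated hypothesis to cover $X$ by finitely many affine opens with affine pairwise intersections, and then glue the resulting morphisms $Spec(K)\to U_i$. First I would fix a finite affine cover $X=\bigcup_{i=1}^r U_i$ with $U_i=Spec(A_i)$ and $U_i\cap U_j$ affine (this is the content of semi-separatedness, Definition \ref{D2.2}). The category $QCoh(X)$ is then built from the $A_i-Mod$ by descent; restricting $F$ along the cocontinuous symmetric monoidal restriction functor $QCoh(X)\to QCoh(U_i)=A_i-Mod$ would not immediately work because $F$ is only defined on the global category, so instead I would analyze how $F$ acts on the structure sheaf and on pushforwards of quasi-coherent sheaves from the $U_i$. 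The essential point is that $F(\mathcal{O}_X)$ is a commutative monoid object in $K-Mod$ which, since $K$ is a field object and $F$ is normal, must be forced to be $K$ itself (or a quotient landing in $K$); normality in the sense of Definition \ref{NDef} is precisely the hypothesis that rigidifies this. Once $F(\mathcal O_X)\cong K$, for each $i$ the composite describing the behaviour of $F$ on sheaves supported on $U_i$ should, by the Noetherian hypothesis on $X$ (every $A_i$ is Noetherian by Proposition \ref{P4.2xr}) together with Proposition \ref{TYpcf}, come from a monoid morphism $A_i\to K$, i.e.\ a point $f_i:Spec(K)\to U_i$.

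The second key step is to show these local points $f_i$ are compatible, so that they glue to a single morphism $f:Spec(K)\to X$. Here I would use that $Spec(K)$ behaves like a one-point space: by part (d) of the preceding theorem, any nontrivial Zariski open immersion into $Spec(K)$ is an isomorphism, so the preimages $f_i^{-1}(U_i\cap U_j)$ are each either empty or all of $Spec(K)$. One then checks, by testing on the quasi-coherent sheaves $j_{ij!}$ or rather $j_{ij*}$ of structure sheaves on the intersections and using cocontinuity of $F$, that exactly one consistent choice survives: the $f_i$ that are "active" all factor through a common $U_{i_0}$ and agree there. Concretely, I would exploit that $F$ sends the covering family $\{U_i\hookrightarrow X\}$, encoded via the Čech/Mayer--Vietoris presentation of $\mathcal O_X$ as a colimit (using quasi-compactness to make it finite and cocontinuity of $F$ to preserve it), to a presentation of $K=F(\mathcal O_X)$; since $K$ has no nontrivial idempotents and no nontrivial subobjects, this pins down which $U_i$ the point lands in. Having produced $f:Spec(K)\to X$, the natural transformation $F\Rightarrow f^*$ is constructed by first building it on each $A_i-Mod$ from Proposition \ref{TYpcf} and checking the glued compatibility, then extending to all of $QCoh(X)$ by the descent/colimit description of quasi-coherent sheaves and Theorem \ref{Thm3.8} (writing an arbitrary quasi-coherent sheaf as a filtered colimit of finitely generated ones, on which one can argue via the affine charts), using cocontinuity of both $F$ and $f^*$ to pass the isomorphism through the colimit. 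That the comparison map is an isomorphism is checked locally on the $U_i$, where it is the content of Proposition \ref{TYpcf}.

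The converse direction is the routine half: given $f:Spec(K)\to X$, the pullback $f^*$ is cocontinuous (it has a right adjoint $f_*$, the pushforward from the formalism recalled from \cite{AB1},\cite{AB2}) and symmetric monoidal (pullback along any morphism of schemes over $(\mathcal C,\otimes,1)$ respects $\otimes$ and the unit, since locally it is extension of scalars $A_i-Mod\to K-Mod$ which is symmetric monoidal). Normality in the sense of Definition \ref{NDef} is then checked by unwinding the definition: one verifies the defining diagrams/conditions of \ref{NDef} locally, where $f^*$ restricts to the extension-of-scalars functor along $A_i\to K$, and extension of scalars between module categories is normal in Vitale's sense \cite[§4]{Vitale} — this is presumably already recorded in, or immediate from, the discussion around Proposition \ref{TYpcf}.

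The main obstacle I expect is the gluing step: turning the collection of affine-local points into a genuine morphism $f:Spec(K)\to X$ and simultaneously upgrading the local natural isomorphisms from Proposition \ref{TYpcf} to a global one on $QCoh(X)$. The subtlety is that $QCoh(X)$ is not simply a limit of the $A_i-Mod$ but is assembled via a descent (equalizer) diagram, and one must verify that $F$, which a priori only "sees" $X$ globally, is compatible with this descent data — equivalently, that $F$ commutes with the pushforwards $j_{i*}:QCoh(U_i)\to QCoh(X)$ well enough to reconstruct it from its affine shadows. This is exactly where the normality hypothesis, quasi-compactness (finiteness of the cover), semi-separatedness (affineness of intersections), and the one-point nature of $Spec(K)$ all have to be combined; isolating the precise normality condition in Definition \ref{NDef} that makes this work is the crux, and the rest is bookkeeping with colimits via Theorem \ref{Thm3.8}.
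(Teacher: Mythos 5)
Your proposal has a genuine gap at its foundation: the step ``for each $i$ the composite describing the behaviour of $F$ on sheaves supported on $U_i$ should \ldots come from a monoid morphism $A_i\to K$'' presupposes that $F$ induces a cocontinuous symmetric monoidal functor $A_i\mathrm{-Mod}\to K\mathrm{-Mod}$ for each chart $U_i$, and there is no such functor in general. The only candidate is $F\circ j_{i*}$, but $j_{i*}$ is neither cocontinuous nor strong monoidal, and $F\circ j_{i*}\circ j_i^*$ has nothing to do with $F$ unless $F$ is $j_i$-local in the sense of Definition \ref{Def4.4} --- which fails for every $U_i$ not containing the (sought-after) point. So the local-points-then-glue strategy cannot get started. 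The actual content of the theorem is precisely to prove that there exists \emph{one} affine $U\in ZarAff(X)$ such that $F$ is $U$-local (Proposition \ref{P5.12wvo}); once that single $U$ is found, Proposition \ref{P4.6fp}(b) identifies $F$ with $G^U\circ i^*$ for a cocontinuous symmetric monoidal $G^U$ on $A\mathrm{-Mod}$, normality supplies the Vitale-normal structure on $G^U$, and Proposition \ref{TYpcf} produces the single morphism $g:A\to K$, with no gluing needed. The locality statement is where all the work lies: one first shows (Proposition \ref{thm4.6c}, via the sum-of-ideals argument you gesture at with ``no nontrivial idempotents,'' plus the monoid-structure trick of Lemma \ref{Lem4.5}) that some $U_{i_0}$ in a finite cover has the property that $F$ inverts every inclusion $\mathcal I\hookrightarrow\mathcal O_X$ with $\mathcal I|_{U_{i_0}}=\mathcal O_X|_{U_{i_0}}$; one then bootstraps to arbitrary monomorphisms that are isomorphisms over $U_{i_0}$ using the transporter ideals $(\mathcal M:\mathcal N_i)$ of Proposition \ref{A380} together with Theorem \ref{Thm3.8}, and finally to arbitrary morphisms by epi--mono factorization. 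None of this machinery appears in your outline, and your Čech remark does not substitute for it: the Čech presentation of $\mathcal O_X$ from the cover is a \emph{limit} diagram, which a cocontinuous $F$ has no reason to preserve.

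Two smaller points. First, you misattribute the role of normality: $F(\mathcal O_X)\cong K$ is automatic from $F$ being symmetric monoidal (units go to units) and needs no rigidification; normality is used only after locality is established, to guarantee that the induced affine functor $G^U$ carries the enriched structure $\tau$ required by Proposition \ref{TYpcf} --- the paper is explicit that cocontinuous symmetric monoidal alone does not suffice to recognize extension of scalars over a general $(\mathcal C,\otimes,1)$. Second, in the converse direction your plan to ``verify the conditions locally where $f^*$ restricts to extension of scalars'' skips the nontrivial half of Theorem \ref{finres}: Definition \ref{NDef} quantifies over \emph{every} affine $U$ for which $f^*$ is $U$-local, so one must show that $U$-locality forces the point to land in $U$, i.e.\ rule out $U\times_X Spec(K)=Spec(0)$. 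The paper does this by computing $f^*(i_*\mathcal O_U)$ over a chart containing the point and deriving a contradiction with $f^*(i_*\mathcal O_U)\cong K$; some such argument is indispensable and is absent from your sketch.
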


\medskip
As indicated above, from Section 3 onwards, we will assume that the   category $(\mathcal C,\otimes,1)$ is ``locally finitely generated''. As such, we present here some natural examples of situations where this condition applies. 

\medskip 
\noindent {\bf Examples:}  (a) If $Y$ is a topological space and $\mathcal A$ is a presheaf of commutative rings on $Y$,
we could take $\mathcal C$ to be the category $\mathcal A-Premod$ of presheaves of $\mathcal A$-modules on $Y$ (see \cite[Corollary 2.15]{Prest3}).  

\medskip
(b) Further, if $Y$ is any topological space with a basis of compact open sets (for example,
any locally  Noetherian space) 
and $\mathcal A$ is any sheaf of commutative rings on $Y$, the category $\mathcal A-Mod$ of sheaves of
$\mathcal A$-modules on $Y$ is also locally finitely generated (see \cite[Theorem 3.5]{Prest}).  In fact, in cases (a) and (b), 
the categories $\mathcal A-Premod$ and $\mathcal A-Mod$ respectively satisfy an even  stronger
condition, i.e., they are actually ``locally finitely presented'' (see \cite{Prest}, \cite{Prest3}). 

\medskip
(c)  If $Y$ is the closed unit interval $[0,1]$ with
the usual topology and $\mathcal A_Y$ is the sheaf of continuous real-valued functions on $Y$, the category 
$\mathcal A_Y-Mod$ of sheaves of $\mathcal A_Y$-modules on $Y$ is also locally finitely generated (see \cite[Proposition 5.5]{Prest}).

\medskip
In this paper, given any symmetric monoidal categories $(C,\otimes,1_C)$ and $(D,\otimes,1_D)$,  a functor
$F:C\longrightarrow D$ will be said to be symmetric monoidal if it preserves the symmetric monoidal structures
up to canonical isomorphism. Additionally, all the symmetric monoidal categories in this paper will
be $\mathbb Z$-linear (i.e., preadditive) and therefore we will only speak of symmetric monoidal functors between them
that are also $\mathbb Z$-linear.  

\medskip

\medskip

\section{Quasi-coherent sheaves on schemes over $(\mathcal C,\otimes,1)$}

\medskip

\medskip
In this section as well as in the rest of this paper, we let $(\mathcal C,\otimes,1)$ denote an abelian symmetric monoidal 
category that contains all small limits and colimits. Further, we will assume that $(\mathcal C,\otimes,1)$ is closed; in other words, given any objects $X$, $Y\in \mathcal C$, there
exists an internal hom object $\underline{Hom}(X,Y)\in \mathcal C$ such that we have natural isomorphisms:
\begin{equation}
Hom(Z\otimes X,Y)\cong Hom(Z,\underline{Hom}(X,Y))\qquad\forall\textrm{ }Z\in \mathcal C
\end{equation} Let $Comm(\mathcal C)$ denote the category of commutative monoid objects in $\mathcal C$. If $A\in Comm(\mathcal C)$ is such
a commutative monoid, we let $A-Mod$ denote the category of $A$-module objects in $\mathcal C$. Then, it follows that
$(A-Mod,\otimes_A,A)$ is also a closed abelian symmetric monoidal category (see Vitale \cite{Vitale}). For any $M$, $N\in A-Mod$, we will denote
by $\underline{Hom}_A(M,N)$ the internal hom object in $A-Mod$. Additionally, we will assume
that filtered colimits commute with finite limits in each $A-Mod$. 

\medskip
Let $Aff_{\mathcal C}:=Comm(\mathcal C)^{op}$ be the category of affine schemes over $\mathcal C$. Given a commutative
monoid object $A\in Comm(\mathcal C)$, we let $Spec(A)$ denote the affine scheme corresponding to $A$. Then, To\"{e}n and 
Vaqui\'{e} \cite[D\'{e}finition 2.10]{TV} have introduced the notion of Zariski coverings in $Aff_{\mathcal C}$ making 
$Aff_{\mathcal C}$ into a subcanonical Grothendieck site. Accordingly, we let $Sh(Aff_{\mathcal C})$ be the category of sheaves
of sets on $Aff_{\mathcal C}$. Further, To\"{e}n and Vaqui\'{e} have also introduced a  notion of Zariski open immersions
in the category $Sh(Aff_{\mathcal C})$ (see \cite[D\'{e}finition 2.12]{TV}) that is stable under composition and base change. We now recall
from \cite[D\'{e}finition 2.15]{TV} the notion of a scheme over the symmetric monoidal category $(\mathcal C,\otimes,1)$. 

\medskip
\begin{defn} Let $X$ be an object of $Sh(Aff_{\mathcal C})$. Then, $X$ is said to be a scheme over $(\mathcal C,\otimes,1)$ if there
exists a family $\{X_i\}_{i\in I}$ of affine schemes over $\mathcal C$ along with a morphism:
\begin{equation}\label{2.2} 
p:\coprod_{i\in I} X_i\longrightarrow X 
\end{equation} satisfying the following two conditions:

(1) The morphism $p$ is an epimorphism in the category $Sh(Aff_{\mathcal C})$.

(2) For each $i\in I$, the morphism $X_i\longrightarrow X$ is a Zariski open immersion in the category $Sh(Aff_{\mathcal C})$. 
\end{defn}

\medskip
A collection of morphisms $\{X_i\longrightarrow X\}_{i\in I}$ as in \eqref{2.2} is said to be an affine cover of the scheme $X$. Given a scheme
$X$ over $\mathcal C$, we denote by $ZarAff(X)$ the category of Zariski open immersions $U\longrightarrow X$ with $U$ affine. By definition (see \cite[D\'{e}finition 2.10]{TV}), a 
collection  $\{U_i=Spec(A_i)\longrightarrow Spec(A)\}_{i\in I}$  of Zariski open immersions is said to be a covering of $Spec(A)$ if there exists a finite
subset $J\subseteq I$ such that a morphism $f:M\longrightarrow N$ in $A-Mod$ is an isomorphism if and only if the induced morphism
$f_j:=f\otimes_AA_j:M\otimes_AA_j\longrightarrow N\otimes_AA_j$ is an isomorphism for each $j\in J$. 

\medskip
\begin{defn}\label{D2.2}  Let $X$ be a scheme over $(\mathcal C,\otimes,1)$. 
 We will say that   $X$  is quasi-compact if every affine cover of $X$ has a finite subcover. 
 
 \medskip Further, we will say that $X$ is semi-separated if the fiber product $U\times_XV\in ZarAff(X)$ for any $U$, $V\in ZarAff(X)$. 
\end{defn} 

\medskip
From the above, it is clear that for any commutative monoid object $A$, the affine scheme $Spec(A)$ is quasi-compact and semi-separated. Further, a scheme
$X$ over $\mathcal C$ determines a functor:
\begin{equation}\label{2.3pje}
\mathcal O_X:ZarAff(X)^{op}\longrightarrow Comm(\mathcal C) \qquad (U=Spec(A)\longrightarrow X)\mapsto A
\end{equation} In \cite{AB1}, we introduced the notion of a quasi-coherent sheaf on a scheme $X$ over $\mathcal C$. For this, we consider the category
$Mod_{\mathcal C}$ whose objects are pairs $(A,M)$ where $A\in Comm(\mathcal C)$ and $M\in A-Mod$. A morphism $(f,f_\sharp):(A,M)
\longrightarrow (B,N)$ in $Mod_{\mathcal C}$ consists of a morphism $f:A\longrightarrow B$ in $Comm(\mathcal C)$ along with
a morphism $f_\sharp:B\otimes_AM\longrightarrow N$ of $B$-modules. 

\medskip
\begin{defn} (see \cite[Definition 2.2]{AB1}) Let $X$ be a scheme over $(\mathcal C,\otimes,1)$. A quasi-coherent sheaf $\mathcal M$ on $X$
is a functor 
\begin{equation}
\mathcal M:ZarAff(X)^{op}\longrightarrow Mod_{\mathcal C}
\end{equation} satisfying the following two conditions: 

(1) If $p$ denotes the obvious projection $p:Mod_{\mathcal C}\longrightarrow Comm(\mathcal C)$, we have $p\circ \mathcal M=\mathcal O_X$. 

(2) For any morphism $u:U'\longrightarrow U$ in $ZarAff(X)$, let $\mathcal M(U)=(\mathcal O_X(U),M)$ and $\mathcal M(U')=
(\mathcal O_X(U'),M')$. Then, the morphism $\mathcal M(u)_\sharp:\mathcal O_X(U')\otimes_{\mathcal O_X(U)}M\longrightarrow 
M'$ is an isomorphism. 

\medskip
When there is no danger of confusion, for any $U\in ZarAff(X)$,
we will often use $\mathcal M(U)$ simply to denote the $\mathcal O_X(U)$-module corresponding to $\mathcal M(U)\in Mod_{\mathcal C}$. 
The category of quasi-coherent sheaves over $X$ will be denoted by $QCoh(X)$. 

\end{defn}

\medskip Given a scheme $X$, it is easy to see that $QCoh(X)$ becomes a symmetric monoidal category by setting:
\begin{equation}
(\mathcal M\otimes_{\mathcal O_X}\mathcal N)(U):=\mathcal M(U)\otimes_{\mathcal O_X(U)}\mathcal N(U) \qquad
\forall\textrm{ }U\in ZarAff(X)
\end{equation} for every $\mathcal M$, $\mathcal N\in QCoh(X)$. Let $f:X\longrightarrow Y$ be a morphism of schemes over
$(\mathcal C,\otimes,1)$. We will say that $f$ is quasi-compact if for each $V\in ZarAff(Y)$ and  $U:=V\times_YX$, every 
affine covering $\{U_j\longrightarrow U\}_{j\in J}$ of $U$ has a finite subcover. Further, we will say that $f:X\longrightarrow Y$
is semi-separated if for each $V\in ZarAff(Y)$, $U:=V\times_YX$ and $U_1$, $U_2\in ZarAff(U)$, the fiber product
$U_1\times_UU_2\in ZarAff(U)$. 

\medskip
\begin{thm} \label{P2.4} Let $f:X\longrightarrow Y$ be a morphism of schemes over $(\mathcal C,\otimes,1)$. Then:

\medskip
(a) There exists a pullback functor $f^*:QCoh(Y)\longrightarrow QCoh(X)$ along with natural isomorphisms $f^*(\mathcal M)\otimes_{\mathcal O_X} f^*(\mathcal N)\cong
f^*(\mathcal M\otimes_{\mathcal O_Y}\mathcal N)$ for $\mathcal M$, $\mathcal N\in QCoh(Y)$. 

\medskip
(b) If $f$ is quasi-compact and semi-separated, then there exists a pushforward $f_*:QCoh(X)\longrightarrow QCoh(Y)$ that is right adjoint
to the pullback $f^*$. 

\medskip
(c) For any $\mathcal M'$, $\mathcal N'\in QCoh(X)$, there are natural morphisms $f_*(\mathcal M')\otimes_{\mathcal O_Y}
f_*(\mathcal N')\longrightarrow f_*(\mathcal M'\otimes_{\mathcal O_X}
\mathcal N')$.

\end{thm}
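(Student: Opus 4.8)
The statement collects the standard functoriality of quasi-coherent sheaves, and the argument runs along the lines of \cite{AB1} and \cite{AB2}; I sketch the plan. For (a), the plan is to first handle a morphism $g:Z\longrightarrow Spec(B)$ with affine target. For such a $g$ and $N\in B-Mod=QCoh(Spec(B))$, every $W=Spec(C)\in ZarAff(Z)$ maps to $Spec(B)$ via some $B\longrightarrow C$, and one sets $(g^*N)(W):=C\otimes_BN$. Functoriality in $W$ is immediate, and for $u:W'=Spec(C')\longrightarrow W=Spec(C)$ in $ZarAff(Z)$ the isomorphism required by condition (2) of the definition of a quasi-coherent sheaf is the associativity isomorphism $C'\otimes_BN\cong C'\otimes_C(C\otimes_BN)$; hence $g^*N\in QCoh(Z)$. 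Since extension of scalars is symmetric monoidal, $g^*N\otimes_{\mathcal O_Z}g^*N'\cong g^*(N\otimes_BN')$ affine-locally, hence globally. For a general $f:X\longrightarrow Y$, fix an affine cover $\{V_j=Spec(B_j)\longrightarrow Y\}$; pulling back along $f$ gives the cover $\{X_j:=V_j\times_YX\}$ of $X$, and each $X_j$ carries a morphism $f_j:X_j\longrightarrow V_j$ with affine target. Applying the previous step to $f_j$ and $\mathcal M(V_j)$ produces $f_j^*(\mathcal M(V_j))\in QCoh(X_j)$, and since quasi-coherent modules satisfy Zariski descent in the site $Aff_{\mathcal C}$ these glue --- along the transition data coming from the quasi-coherence of $\mathcal M$ over the (not necessarily affine) overlaps $V_j\times_YV_k$ --- to a well-defined $f^*\mathcal M\in QCoh(X)$, whose monoidal isomorphism is inherited from the affine-local one.

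For (b), assume $f$ quasi-compact and semi-separated and let $\mathcal M'\in QCoh(X)$. For $V=Spec(B)\in ZarAff(Y)$ put $X_V:=V\times_YX$; quasi-compactness of $f$ gives a finite affine cover $\{U_i=Spec(A_i)\longrightarrow X_V\}_{i\in S}$, and semi-separatedness of $f$ makes each $U_i\times_{X_V}U_{i'}$ an object of $ZarAff(X_V)$, hence affine. Restricting scalars along $B\longrightarrow A_i$, one defines
\begin{equation}
(f_*\mathcal M')(V):=\ker\Bigl(\textstyle\prod_{i\in S}\mathcal M'(U_i)\rightrightarrows\prod_{i,i'\in S}\mathcal M'\bigl(U_i\times_{X_V}U_{i'}\bigr)\Bigr)\in B-Mod,
\end{equation}
the two arrows being the two projections; this limit exists since $B-Mod$ is complete, and one checks it is independent of the chosen finite affine cover up to canonical isomorphism. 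One then verifies functoriality in $V$ and condition (2): for $V'=Spec(B')\longrightarrow V$ in $ZarAff(Y)$ one needs $(f_*\mathcal M')(V')\cong B'\otimes_B(f_*\mathcal M')(V)$. This is the delicate point: pulling the cover $\{U_i\}$ back to $X_{V'}$ and refining it to an affine cover there, one must see that forming the above kernel commutes with the base change $B'\otimes_B(-)$, which is where semi-separatedness together with the flatness of Zariski localizations (so that $B'\otimes_B(-)$ is exact enough on the relevant finite diagrams) enters. Finally, the adjunction $Hom_{QCoh(X)}(f^*\mathcal N,\mathcal M')\cong Hom_{QCoh(Y)}(\mathcal N,f_*\mathcal M')$ is obtained by computing both sides affine-locally over $Y$: a map $f^*\mathcal N\to\mathcal M'$ over $X_V$ amounts, via the cover $\{U_i\}$, to a compatible family of $A_i$-linear maps $A_i\otimes_B\mathcal N(V)\to\mathcal M'(U_i)$ agreeing on the overlaps $U_i\times_{X_V}U_{i'}$, which is precisely a $B$-linear map $\mathcal N(V)\to(f_*\mathcal M')(V)$, naturally in $V$.

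Part (c) is then formal. Since $f^*$ is symmetric monoidal by (a) and left adjoint to $f_*$ by (b), with counit $\epsilon:f^*f_*\longrightarrow\mathrm{id}$, the desired morphism $f_*\mathcal M'\otimes_{\mathcal O_Y}f_*\mathcal N'\longrightarrow f_*(\mathcal M'\otimes_{\mathcal O_X}\mathcal N')$ is defined as the one adjoint to the composite
\begin{equation}
f^*\bigl(f_*\mathcal M'\otimes_{\mathcal O_Y}f_*\mathcal N'\bigr)\cong f^*f_*\mathcal M'\otimes_{\mathcal O_X}f^*f_*\mathcal N'\xrightarrow{\ \epsilon\otimes\epsilon\ }\mathcal M'\otimes_{\mathcal O_X}\mathcal N',
\end{equation}
and naturality in $\mathcal M'$ and $\mathcal N'$ is automatic from that of $\epsilon$ and of the monoidal structure on $f^*$. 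I expect the main obstacle to be the verification in (b) that the \v{C}ech-type kernel defining $f_*$ indeed satisfies the quasi-coherence base-change condition (2); once that is in place, everything else reduces to the affine situation, where the assertions become the familiar behaviour of extension and restriction of scalars, and (c) is pure adjunction formalism.
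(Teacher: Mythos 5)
Your proposal is correct and follows essentially the same route as the paper: the paper defers the constructions of $f^*$ and $f_*$ to \cite{AB2}, where $f^*$ is defined affine-locally by extension of scalars exactly as you describe (cf.\ the formula $f^*(\mathcal M)(U_{ij})=\mathcal M(Y_i)\otimes_{\mathcal O_Y(Y_i)}\mathcal O_X(U_{ij})$) and $f_*$ by the same \v{C}ech-type limit over a finite affine cover (cf.\ the formula \eqref{Eq2.8rt} used in Proposition \ref{P2.5qp}), with flatness of Zariski localizations handling the base-change condition. Your argument for (c) --- applying the monoidal isomorphism of $f^*$ to $f_*\mathcal M'\otimes_{\mathcal O_Y}f_*\mathcal N'$, composing with the counits, and transposing across the adjunction --- is precisely the paper's proof.
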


\begin{proof} (a) Let $\{Y_i\longrightarrow Y\}_{i\in I}$ be an affine covering of $Y$. For each
$i\in I$, set $X_i:=X\times_YY_i$ and let $\{U_{ij}\longrightarrow X_i\}_{j\in J_i}$ be an affine covering of $X_i$. Then, the 
 functor $f^*:QCoh(Y)\longrightarrow QCoh(X)$, as defined in   \cite[Proposition 2.6]{AB2} is determined by setting
 $f^*(\mathcal M)(U_{ij}):=\mathcal M(Y_i)\otimes_{\mathcal O_Y(Y_i)}\mathcal O_X(U_{ij})$ for each $j\in J_i$, $i\in I$.  
 It follows that:
 \begin{equation}
 \begin{array}{ll}
 f^*(\mathcal M\otimes_{\mathcal O_Y}\mathcal N)(U_{ij})& \cong (\mathcal M(Y_i)\otimes_{\mathcal O_Y(Y_i)}\mathcal O_X(U_{ij}))
 \otimes_{\mathcal O_X(U_{ij})}(\mathcal N(Y_i)\otimes_{\mathcal O_Y(Y_i)}\mathcal O_X(U_{ij})) \\ & \cong f^*(\mathcal M)(U_{ij})
 \otimes_{\mathcal O_X(U_{ij})}f^*(\mathcal N)(U_{ij})
 \end{array}
 \end{equation}
 
 \medskip
 Part (b) is already shown in  \cite[Proposition 2.4]{AB2} and \cite[Proposition 2.6]{AB2}. Finally, for (c), we consider the counit
 morphisms $f^*f_*(\mathcal M')\longrightarrow \mathcal M'$, $f^*f_*(\mathcal N')\longrightarrow \mathcal N'$ given by the adjoint pair
 $(f^*,f_*)$. Using (a), we know that $(f^*f_*(\mathcal M'))\otimes_{\mathcal O_X}(f^*f_*(\mathcal N'))\cong f^*(f_*(\mathcal M')
 \otimes_{\mathcal O_Y}f_*(\mathcal N'))$ and hence we have a morphism $f^*(f_*(\mathcal M')
 \otimes_{\mathcal O_Y}f_*(\mathcal N'))
\longrightarrow \mathcal M'\otimes_{\mathcal O_X}\mathcal N'$.  Again, since $f^*$ is left adjoint to $f_*$, this gives us  a natural morphism:
$f_*(\mathcal M')\otimes_{\mathcal O_Y}
f_*(\mathcal N')\longrightarrow f_*(\mathcal M'\otimes_{\mathcal O_X}
\mathcal N')$.

\end{proof}

\medskip
From Proposition \ref{P2.4}, it follows that, in the language of \cite[XI.2]{ML}, $f^*:QCoh(Y)\longrightarrow QCoh(X)$ is a ``strong
tensor functor'' between the symmetric monoidal categories 
$QCoh(Y)$ and $QCoh(X)$ whereas $f_*:QCoh(X)\longrightarrow QCoh(Y)$ is a ``lax tensor functor'' (if $f$ is quasi-compact and
semi-separated). Further, since $f^*$ is a left adjoint, it preserves colimits, in other words, $f^*:QCoh(Y)\longrightarrow QCoh(X)$ is a cocontinuous functor. In particular, if $X$ is a scheme over $(\mathcal C,\otimes,1)$ that is semi-separated in the sense of Definition 
\ref{D2.2}, we note that for any $U\in ZarAff(X)$, the Zariski open immersion $i:U\longrightarrow X$ is quasi-compact and semi-separated. 
We now have the following result. 

\medskip
\begin{thm}\label{P2.5qp} Let $X$ be a semi-separated scheme over $(\mathcal C,\otimes,1)$. Let $i:U\longrightarrow X$ be a Zariski open
immersion that is quasi-compact and semi-separated. Then, the counit natural transformation $i^*i_*\longrightarrow 1$  is an isomorphism of functors $i^*i_*\cong 1:QCoh(U)\longrightarrow QCoh(U)$. In particular, this is true for $(i:U\longrightarrow X)\in ZarAff(X)$ . 
\end{thm}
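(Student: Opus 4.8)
The plan is to verify the statement affine-locally on $U$, the decisive point being that a Zariski open immersion is a monomorphism in $Sh(Aff_{\mathcal C})$. First I would fix an affine covering $\{X_k\longrightarrow X\}_k$ of $X$. For each $k$ the base change $U\times_XX_k\longrightarrow X_k$ is again a Zariski open immersion (stability under base change), so choosing affine coverings $\{W_{k\ell}\longrightarrow U\times_XX_k\}_\ell$ produces an affine covering $\{W_{k\ell}\longrightarrow U\}$ in which every $W_{k\ell}$ lies, via $W_{k\ell}\to U\times_XX_k\to X_k\to X$, in $ZarAff(X)$ and, via $W_{k\ell}\to U\times_XX_k\to U$, in $ZarAff(U)$, with the two resulting maps $W_{k\ell}\to X$ agreeing (the square $U\times_XX_k$ being cartesian). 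Since a morphism in $QCoh(U)$ is an isomorphism as soon as its restriction to each member of an affine cover is, it suffices to show that the counit $\varepsilon_{\mathcal M}\colon i^*i_*\mathcal M\longrightarrow\mathcal M$ is an isomorphism on each $W_{k\ell}$.

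Next I would unwind both functors on $W_{k\ell}$. By the construction of $i^*$ in \cite[Proposition 2.6]{AB2} one has $(i^*\mathcal N)(W_{k\ell})=\mathcal N(X_k)\otimes_{\mathcal O_X(X_k)}\mathcal O_U(W_{k\ell})$, and since $W_{k\ell}\to X_k$ is a Zariski open immersion of affines and $\mathcal N$ is quasi-coherent, this is canonically $\mathcal N(W_{k\ell})$ (note $\mathcal O_X(W_{k\ell})=\mathcal O_U(W_{k\ell})$); hence $(i^*i_*\mathcal M)(W_{k\ell})\cong(i_*\mathcal M)(W_{k\ell})$. On the other hand, the construction of $i_*$ underlying Proposition \ref{P2.4} (see \cite[Propositions 2.4 and 2.6]{AB2}) computes $(i_*\mathcal M)(V)$, for $V\in ZarAff(X)$, as the sections of $\mathcal M$ over the open $U\times_XV$ of $U$. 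Because $i$ is a monomorphism and $W_{k\ell}\to X$ factors through $i$, the projection $U\times_XW_{k\ell}\longrightarrow W_{k\ell}$ is an isomorphism, so these sections are simply $\mathcal M(W_{k\ell})$; chasing the adjunction then identifies $\varepsilon_{\mathcal M}(W_{k\ell})$ with the identity of $\mathcal M(W_{k\ell})$. Thus $\varepsilon_{\mathcal M}$ is an isomorphism in $QCoh(U)$. For the final assertion I would invoke the remark made just before the statement: when $X$ is semi-separated, every $(i\colon U\to X)\in ZarAff(X)$ is automatically quasi-compact and semi-separated, so it is an instance of the case treated above.

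The step I expect to be delicate is the bookkeeping in the second paragraph: the functors $i^*$ and $i_*$ of \cite{AB2} are defined relative to a choice of affine cover, so I will need to check that the identifications $(i^*i_*\mathcal M)(W_{k\ell})\cong\mathcal M(W_{k\ell})$ are natural in $W_{k\ell}$ and genuinely transport the counit to the identity, rather than yielding only some abstract isomorphism $i^*i_*\mathcal M\cong\mathcal M$. Conceptually the whole statement is the base change isomorphism $i^*i_*\cong(p_1)_*\,p_2^*$ for the cartesian square with corners $U\times_XU,\ U,\ U,\ X$, together with the fact that both projections $U\times_XU\to U$ are isomorphisms (again because $i$ is a monomorphism); but carrying this out honestly within the present formalism is where the care lies.
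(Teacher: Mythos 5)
Your argument is correct, and it follows the same overall strategy as the paper --- reduce to an affine-local verification and then unwind the explicit constructions of $i^*$ and $i_*$ from \cite{AB2} --- but the local computation itself is carried out differently. The paper evaluates $(i^*i_*\mathcal M)(W)$ at an \emph{arbitrary} $W\in ZarAff(V\times_XU)$: it writes $(i_*\mathcal M)(V)$ as the finite limit \eqref{Eq2.8rt} over a finite affine cover $\{W_i\}$ of $V\times_XU$, pushes $-\otimes_{\mathcal O_X(V)}\mathcal O_X(W)$ through that limit by flatness, rewrites each term via quasi-coherence as $\mathcal M(W_i\times_VW)$, and finally invokes the descent statement \cite[Corollaire 2.11]{TV} for the induced cover $\{W_i\times_VW\}$ of $W$. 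You instead restrict to test affines $W_{k\ell}$ that lie in $ZarAff(X)$ as well as $ZarAff(U)$, use the quasi-coherence of $i_*\mathcal M$ on $X$ (Proposition \ref{P2.4}(b)) to identify $(i^*i_*\mathcal M)(W_{k\ell})$ with $(i_*\mathcal M)(W_{k\ell})$, and then use the fact that $i$ is a monomorphism to get $U\times_XW_{k\ell}\cong W_{k\ell}$, which collapses the limit defining $(i_*\mathcal M)(W_{k\ell})$ to $\mathcal M(W_{k\ell})$ with no base-change manipulation at all. What your route buys is a cleaner local step (no commuting of tensor products past limits, no appeal to \cite[Corollaire 2.11]{TV} beyond the trivial cover); what it costs is that you only control the counit on the chosen cover $\{W_{k\ell}\}$ of $U$ and must then invoke the descent/gluing principle that isomorphisms of quasi-coherent sheaves are detected on an affine cover --- which, to be airtight, again uses semi-separatedness of $X$ to know that the intersections $W_{k\ell}\times_UW=W_{k\ell}\times_XW$ are affine for a general $W\in ZarAff(U)$. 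The remaining bookkeeping you flag (that the composite identification really is the counit, and that the constructions are independent of the chosen covers) is genuine but is left implicit in the paper's own proof as well, so you are not losing anything relative to the published argument.
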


\begin{proof} We consider some $\mathcal M\in QCoh(U)$. Then, since $i$ is quasi-compact and semi-separated, 
we have $i_*\mathcal M\in QCoh(X)$. Then, as in the proof of Proposition \ref{P2.4}(a), $i^*i_*\mathcal M\in QCoh(U)$ is determined
by the modules:
\begin{equation}\label{Eq2.7}
(i^*i_*\mathcal M)(W):=(i_*\mathcal M)(V)\otimes_{\mathcal O_X(V)}\mathcal O_X(W) \qquad \forall \textrm{ }V\in ZarAff(X), \textrm{ }W\in ZarAff(V\times_XU)
\end{equation}  Since $i$ is quasi-compact, we can choose a finite affine cover $\{W_i\}_{i\in I}$ of $i^{-1}(V)=V\times_XU$. Since $i^{-1}(V)=V\times_XU\longrightarrow V\longrightarrow X$ is a Zariski open immersion and hence a monomorphism,
it follows that $(W_i\times_{i^{-1}(V)} W_{i'})=W_i\times_{V}W_{i'}=W_i\times_XW_{i'}$ for any $i$, $i'\in I$. Then since $X$ is semi-separated,
$(W_i\times_{i^{-1}(V)} W_{i'}) =W_i\times_XW_{i'}$ is affine. Then,
by definition of $i_*$ (see \cite[Proposition 2.4]{AB2}, we know that 
\begin{equation}\label{Eq2.8rt}(i_*\mathcal M)(V)=lim\left(\prod_{i\in I}\mathcal M(W_i) \overset{\longrightarrow}{\underset{\longrightarrow}{ }} \prod_{i,i'\in I}
\mathcal M(W_i\times_{V} W_{i'})\right)
\end{equation}  Combining with \eqref{Eq2.7}, it follows
that 
\begin{equation}\label{Eq2.9qp}
\begin{array}{l}
(i^*i_*\mathcal M)(W)\\ =lim\left(\prod_{i\in I}\mathcal M(W_i)\overset{\longrightarrow}{\underset{\longrightarrow}{ }}\prod_{i,i'\in I}
\mathcal M(W_i\times_{V} W_{i'})\right)
\otimes_{\mathcal O_X(V)}\mathcal O_X(W) \\
= lim\left(\prod_{i\in I}\mathcal M(W_i)\otimes_{\mathcal O_X(V)}\mathcal O_X(W) \overset{\longrightarrow}{\underset{\longrightarrow}{ }}\prod_{i,i'\in I}
\mathcal M(W_i\times_{V} W_{i'})\otimes_{\mathcal O_X(V)}\mathcal O_X(W) \right) \\
\end{array}
\end{equation}  Additionally, for each $i\in I$, we have:
\begin{equation}\label{2.10qp}
\begin{array}{ll}
\mathcal M(W_i)\otimes_{\mathcal O_X(V)}\mathcal O_X(W)&=\mathcal M(W_i)\otimes_{\mathcal O_X(W_i)}(\mathcal O_X(W_i)
\otimes_{\mathcal O_X(V)}\mathcal O_X(W))\\
&=\mathcal M(W_i)\otimes_{\mathcal O_X(W_i)}\mathcal O_X(W_i\times_VW)
=\mathcal M(W_i\times_VW)\\
\end{array}
\end{equation} Similarly, for any $i$, $i'\in I$, we have:
\begin{equation}\label{2.11qp}
\mathcal M(W_i\times_VW_{i'})\otimes_{\mathcal O_X(V)}\mathcal O_X(W)=\mathcal M(W_i\times_VW_{i'}\times_VW)
\end{equation} Combining \eqref{Eq2.9qp}, \eqref{2.10qp} and \eqref{2.11qp}, it follows that:
\begin{equation}\label{2.12qp}
(i^*i_*\mathcal M)(W)=lim\left(\prod_{i\in I}\mathcal M(W_i\times_VW) \overset{\longrightarrow}{\underset{\longrightarrow}{ }}\prod_{i,i'\in I}
\mathcal M(W_i\times_VW_{i'}\times_VW)\right)
\end{equation} Finally, since $\{W_i\times_VW\}_{i\in I}$ is an affine cover of $W$, it follows from \cite[Corollaire 2.11]{TV}
that the limit in \eqref{2.12qp} is isomorphic to $\mathcal M(W)$. Hence, $i^*i_*\mathcal M(W)\cong \mathcal M(W)$ and the result follows.
\end{proof}

\medskip

\medskip

\section{Colimits of finitely generated quasi-coherent submodules}

\medskip

\medskip

In the usual algebraic geometry of schemes over $Spec(\mathbb Z)$, it is a classical fact that a quasi-coherent sheaf on a noetherian
scheme $X$ is the union of its coherent subsheaves (see \cite[Corollaire 9.4.9]{EGA1}). For schemes that are quasi-compact and quasi-separated, every quasi-coherent
sheaf is a filtered direct colimit of  finitely presented $\mathcal O_X$-modules  (see \cite[$\S$ 6.9]{EGA1b}). Similar results are known for certain kinds of algebraic
stacks (see \cite[Proposition 15.4]{LMB} and \cite{Rydh}, \cite{Rydh2}).

\medskip
In  this section, we will develop similar results  for  quasi-coherent sheaves on schemes over $(\mathcal C,\otimes,1)$. Given
a commutative monoid object $A\in Comm(\mathcal C)$, we will say that an $A$-module $M$  is finitely generated if it is finitely generated
as an object of the category $A-Mod$. In other words, given any filtered system of monomorphisms $\{M_i\}_{i\in I}$ in $A-Mod$, we have
an isomorphism:
\begin{equation}
colim_{i\in I}Hom_{A-Mod}(M,M_i)\overset{\cong}{\longrightarrow} Hom_{A-Mod}(M,colim_{i\in I}M_i)
\end{equation} 
Let $X$ be a quasi-compact scheme over $(\mathcal C,\otimes,1)$. We will say that a  quasi-coherent sheaf $\mathcal M$ on $X$ is finitely generated if $\mathcal M(U)$ is finitely generated as an $\mathcal O_X(U)$-module for every $U\in ZarAff(X)$. 

\medskip
\begin{thm} \label{P2.6} (a) Let $A\in Comm(\mathcal C)$ be a commutative monoid object and let $M\in A-Mod$ be an $A$-module. Let $\{M_i\}_{i\in I}$ be
a family (finite or infinite) of submodules of $M$. Then, there exists a submodule $\sum_{i\in I}M_i$ of $M$ that is the ``sum'' of the family
of submodules $\{M_i\}_{i\in I}$. In other words, the submodule $\sum_{i\in I}M_i$ satisfies the following two conditions:

\medskip
(1) For each $i\in I$, $M_i$ is a submodule of $\sum_{i\in I}M_i$.

\medskip
(2) Let $N$ be a submodule of $M$ containing $M_i$ for every $i\in I$. Then, $\sum_{i\in I}M_i$ is a submodule of $N$. 

\medskip
(b) Let $X$ be a scheme over $(\mathcal C,\otimes,1)$ and let $\mathcal M$ be a quasi-coherent sheaf on  $X$. Let $\{\mathcal M_i\}_{i\in I}$
be a family (finite or infinite) of quasi-coherent submodules of $\mathcal M$. Then, there exists a quasi-coherent submodule
$\sum_{i\in I}\mathcal M_i$ of $\mathcal M$  that is the sum of the quasi-coherent submodules $\{\mathcal M_i\}_{i\in I}$. 
\end{thm}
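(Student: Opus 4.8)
The plan is to reduce the global statement (b) to the local statement (a), so I would handle (a) first. For part (a), the natural approach is to construct $\sum_{i\in I} M_i$ as the image of a single morphism. Concretely, for each finite subset $F\subseteq I$ the inclusions $M_i\hookrightarrow M$ ($i\in F$) assemble to a morphism $\bigoplus_{i\in F} M_i\longrightarrow M$ in $A-Mod$; taking the colimit over all finite $F$ (equivalently, using that $A-Mod$ is cocomplete) yields a morphism $\varphi:\bigoplus_{i\in I} M_i\longrightarrow M$. Since $A-Mod$ is abelian, $\varphi$ has an image, and I would define $\sum_{i\in I} M_i:=\mathrm{im}(\varphi)$, which comes with a canonical monomorphism into $M$. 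Condition (1) holds because each $M_i\hookrightarrow M$ factors through $\varphi$ and hence through its image. For condition (2), if $N\subseteq M$ contains every $M_i$, then $\varphi$ factors through $N$ (check on each summand, using that a map into a subobject is detected by composition with the monomorphism $N\hookrightarrow M$, which is where I use that $\mathcal C$, and hence $A-Mod$, is abelian so that images are well behaved); therefore $\mathrm{im}(\varphi)$, being the smallest subobject through which $\varphi$ factors, is contained in $N$. This is all formal abelian-category manipulation and should be routine.

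For part (b), I would define $\sum_{i\in I}\mathcal M_i$ sectionwise: for each $U\in ZarAff(X)$ set $(\sum_{i\in I}\mathcal M_i)(U):=\sum_{i\in I}\mathcal M_i(U)$, the sum inside the $\mathcal O_X(U)$-module $\mathcal M(U)$ constructed in part (a). The content is to check that this presheaf assignment is actually a quasi-coherent sheaf, i.e. that it satisfies condition (2) in the definition of $QCoh(X)$: for a morphism $u:U'\to U$ in $ZarAff(X)$ we need $\mathcal O_X(U')\otimes_{\mathcal O_X(U)}(\sum_i\mathcal M_i(U))\xrightarrow{\cong}\sum_i\mathcal M_i(U')$. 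Since each $\mathcal M_i$ is quasi-coherent, $\mathcal O_X(U')\otimes_{\mathcal O_X(U)}\mathcal M_i(U)\cong\mathcal M_i(U')$ compatibly with the inclusions into $\mathcal M$, so applying $\mathcal O_X(U')\otimes_{\mathcal O_X(U)}-$ to the epimorphism $\bigoplus_i\mathcal M_i(U)\twoheadrightarrow\sum_i\mathcal M_i(U)$ and using right-exactness of the base-change functor identifies the target with the image of $\bigoplus_i\mathcal M_i(U')\to\mathcal M(U')$, which is exactly $\sum_i\mathcal M_i(U')$. The functoriality in $U$ (compatibility of the restriction maps $\mathcal M_i(U)\hookrightarrow\mathcal M(U)$ with those of $\mathcal M_i$ and $\mathcal M$) then upgrades $\sum_i\mathcal M_i$ to a subobject of $\mathcal M$ in $QCoh(X)$, and the universal property (1)–(2) is inherited sectionwise from part (a).

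The main obstacle I anticipate is the base-change step in (b): I need that $\mathcal O_X(U')\otimes_{\mathcal O_X(U)}(-)$ carries the subobject $\sum_i\mathcal M_i(U)\subseteq\mathcal M(U)$ to the subobject $\sum_i\mathcal M_i(U')\subseteq\mathcal M(U')$ and not merely to some quotient of it. This is precisely the assertion that extension of scalars along a Zariski open immersion is exact (not just right exact) on the relevant modules — and this is exactly the flatness built into the To\"en–Vaqui\'e formalism of Zariski open immersions: $\mathcal O_X(U')$ is flat over $\mathcal O_X(U)$, so $\mathcal O_X(U')\otimes_{\mathcal O_X(U)}-$ preserves monomorphisms and hence sends images to images. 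I would invoke this flatness explicitly (it is standard in \cite{TV}), after which the identification of the base change of $\sum_i\mathcal M_i(U)$ with $\sum_i\mathcal M_i(U')$ is immediate from right-exactness applied to the presentation $\bigoplus_i\mathcal M_i(U)\twoheadrightarrow\sum_i\mathcal M_i(U)$ together with the quasi-coherence isomorphisms for the individual $\mathcal M_i$.
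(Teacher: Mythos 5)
Your proposal is correct and follows essentially the same route as the paper: both define $\sum_{i\in I}M_i$ as the image of the canonical map $\bigoplus_{i\in I}M_i\longrightarrow M$ in the abelian category $A-Mod$, and both establish quasi-coherence of the sectionwise sum in (b) by combining right-exactness of base change with the flatness of $\mathcal O_X(U')$ over $\mathcal O_X(U)$ coming from the To\"{e}n--Vaqui\'{e} definition of Zariski open immersions. The only cosmetic difference is that the paper phrases the image as $Coker(Ker(q))\cong Ker(Coker(q))$ and appeals to commutation of flat base change with the colimits and finite limits appearing in that formula, which is exactly the argument you spell out.
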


\begin{proof} (a) Since $A-Mod$ is an abelian category, it is well known that we can take sums of subobjects of any $M\in A-Mod$ (see, for instance,
\cite{PF}).  
Explicitly, the sum $\sum_{i\in I}M_i$ may be described as follows: we consider the induced morphism $q:\bigoplus_{i\in I}M_i\longrightarrow M$. Then, the sum $\sum_{i\in I}M_i$ is defined
to be the image of this morphism in the abelian category $A-Mod$; in other words, we set:
\begin{equation}\label{Eq2.10c}
\begin{array}{ll}
\sum_{i\in I}M_i& := Coker(Ker(q))=Coker(Ker(q:\bigoplus_{i\in I}M_i\longrightarrow M)\longrightarrow \bigoplus_{i\in I}M_i)\\
&\cong Ker(Coker(q))=Ker(M\longrightarrow Coker(q:\bigoplus_{i\in I}M_i\longrightarrow M))\\
\end{array}
\end{equation}  

\medskip
(b) For any $U\in ZarAff(X)$, we set $(\sum_{i\in I}\mathcal M_i)(U):=\sum_{i\in I}\mathcal M_i(U)$. We consider a Zariski open immersion $W \longrightarrow U$ of affines. Then, $\mathcal O_X(W)$ is a flat $\mathcal O_X(U)$-module. Since the sum in \eqref{Eq2.10c}
is defined in terms of colimits and finite limits, 
it now follows that $(\sum_{i\in I}\mathcal M_i)(W)=(\sum_{i\in I}\mathcal M_i)(U)\otimes_{\mathcal O_X(U)}\mathcal O_X(W)$. Hence, 
$\sum_{i\in I}\mathcal M_i$ is a quasi-coherent submodule of $\mathcal M$ that is the sum of the submodules $\mathcal M_i$, $i\in I$. 

\end{proof}

\medskip
Given an $A$-module $M$, we can consider the system of its finitely generated submodules ordered by inclusion. We will now show
that this system is filtered. 

\medskip
\begin{thm}\label{P3.2ct} (a) Let $A$ be a commutative monoid object of $(\mathcal C,\otimes,1)$ and let $M$ be an $A$-module. Then, the system of 
finitely generated submodules of $M$ ordered by inclusion is a filtered direct system. 

\medskip
(b) Let $X$ be a quasi-compact scheme over $(\mathcal C,\otimes,1)$ and let $\mathcal M$ be a quasi-coherent sheaf on $X$. Then, the system of finitely
generated quasi-coherent submodules of $\mathcal M$ is a filtered direct system. 

\end{thm}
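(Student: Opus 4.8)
The plan is to prove part (a) first and then bootstrap part (b) from it. For part (a), I must check the two defining properties of a filtered direct system for the poset of finitely generated submodules of $M$ ordered by inclusion. Nonemptiness is immediate since $0$ is a finitely generated submodule. The essential point is upward-directedness: given two finitely generated submodules $M_1$ and $M_2$ of $M$, I claim that $M_1 + M_2$ (the sum constructed in Proposition \ref{P2.6}(a)) is again finitely generated, and it obviously contains both $M_1$ and $M_2$. To see that a finite sum of finitely generated submodules is finitely generated, recall from \eqref{Eq2.10c} that $M_1 + M_2$ is the image of the canonical map $q:M_1\oplus M_2\longrightarrow M$. A finite direct sum of finitely generated objects is finitely generated (this is a standard fact about finitely generated objects in an abelian, or even just cocomplete, category: $\mathrm{Hom}(M_1\oplus M_2,-) \cong \mathrm{Hom}(M_1,-)\times\mathrm{Hom}(M_2,-)$ and a finite product of functors preserving filtered colimits of monos still does), and a quotient of a finitely generated object is finitely generated. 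Hence $M_1+M_2$, being a quotient of $M_1\oplus M_2$, is finitely generated. This gives upward-directedness, and the poset is trivially directed in the weaker sense needed for a filtered system since it is a poset.

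For part (b), the strategy is to transport the argument of (a) to quasi-coherent sheaves using the fact that the sum $\sum_{i}\mathcal M_i$ of quasi-coherent submodules exists and is quasi-coherent (Proposition \ref{P2.6}(b)), together with the local characterization of finite generation. Again nonemptiness is clear (the zero subsheaf). For upward-directedness, take two finitely generated quasi-coherent submodules $\mathcal M_1$, $\mathcal M_2$ of $\mathcal M$, and form $\mathcal M_1 + \mathcal M_2 \in QCoh(X)$ by Proposition \ref{P2.6}(b); it contains both. It remains to show $\mathcal M_1+\mathcal M_2$ is finitely generated, i.e. that $(\mathcal M_1+\mathcal M_2)(U) = \mathcal M_1(U)+\mathcal M_2(U)$ is a finitely generated $\mathcal O_X(U)$-module for every $U\in ZarAff(X)$. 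But this is exactly an instance of the computation in part (a) applied to the $\mathcal O_X(U)$-modules $\mathcal M_1(U)$ and $\mathcal M_2(U)$, which are finitely generated by hypothesis. Therefore $(\mathcal M_1+\mathcal M_2)(U)$ is finitely generated for each $U$, so $\mathcal M_1+\mathcal M_2$ is a finitely generated quasi-coherent submodule of $\mathcal M$ dominating both $\mathcal M_1$ and $\mathcal M_2$.

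The main obstacle, and the one place the argument needs genuine care rather than formal manipulation, is verifying the categorical lemma that finite direct sums and quotients of finitely generated objects remain finitely generated in $A-Mod$. The quotient statement is routine: if $N \twoheadrightarrow N'$ and $\{P_i\}$ is a filtered system of monomorphisms with colimit $P$, then $\mathrm{Hom}(N',P) \hookrightarrow \mathrm{Hom}(N,P)$ and one chases the colimit isomorphism for $N$ through, using that a map $N\to P_i$ factoring through a map $N\to P'$ and killing the kernel can be descended (this uses that $A-Mod$ is abelian and that filtered colimits are exact, which holds since filtered colimits commute with finite limits in $A-Mod$ by our standing assumptions). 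The finite-sum statement reduces to the case of two summands and uses that $\mathrm{Hom}(N_1\oplus N_2, -)$ is the product of $\mathrm{Hom}(N_1,-)$ and $\mathrm{Hom}(N_2,-)$, and a finite product of functors each commuting with filtered colimits also commutes with filtered colimits. I would state these as a short lemma (or simply cite the standard theory of finitely generated/presented objects, e.g. \cite{Gar}, \cite{Sten}) rather than reprove them in detail, and then the two directedness arguments become one-line consequences.
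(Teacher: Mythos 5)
Your proposal is correct and follows essentially the same route as the paper: both reduce directedness to showing $M_1+M_2$ is finitely generated, both use that $M_1\oplus M_2$ is finitely generated and that $M_1+M_2$ is its image under the map of \eqref{Eq2.10c}, and both descend a map $M_1+M_2\to\operatorname{colim}N_i$ through some $N_{i_0}$ by killing $\operatorname{Ker}(q')$ using that the transition morphisms into the filtered colimit of monomorphisms are themselves monomorphisms. Part (b) is likewise handled identically, by applying (a) sectionwise over each $U\in ZarAff(X)$.
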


\begin{proof} (a) Let $ M_1$, $ M_2$ be two finitely generated submodules of $M$. We will show that $M_1+M_2$ is also finitely generated. For this,
we choose a filtered direct system of monomorphisms $\{N_i\}_{i\in I}$ in $A-Mod$ and set $N=colim_{i\in I}N_i$. Since filtered colimits commute
with finite limits in $A-Mod$, it follows   that the canonical morphism $n_i:N_i\longrightarrow N$   is a monomorphism for each $i\in I$. We now choose a morphism $f:M_1+M_2\longrightarrow N$. 

\medskip
From \eqref{Eq2.10c}, we know that there exists an epimorphism $q':M_1\oplus M_2\longrightarrow M_1+M_2$. It is clear that $M_1\oplus M_2$ 
is finitely generated and hence the composition $f\circ q':M_1\oplus M_2\longrightarrow N$ factors through  $N_{i_0}$ for some $i_0\in I$. Further,
we see that the composition
\begin{equation}\label{3.3rvp}
Ker(q')\longrightarrow M_1\oplus M_2\longrightarrow N_{i_0}\overset{n_{i_0}}{\longrightarrow} N
\end{equation} is $0$. Then, since $n_{i_0}:N_{i_0}\longrightarrow N$ is a monomorphism, the composition $Ker(q')\longrightarrow M_1\oplus
M_2\longrightarrow N_{i_0}$ must be $0$. Finally, since $M_1+M_2=Coker(Ker(q')\longrightarrow M_1\oplus M_2)$, it follows that
the morphism $f:M_1+M_2\longrightarrow N$ factors through $N_{i_0}$. 

\medskip
(b) For finitely generated quasi-coherent submodules $\mathcal N$, $\mathcal P$ of $\mathcal M$, we consider the submodule 
$\mathcal N+\mathcal P$. From part (a) it follows that for each $U\in ZarAff(X)$, $(\mathcal N+\mathcal P)(U)
=\mathcal N(U)+\mathcal P(U)$ is a finitely generated submodule of $\mathcal M(U)$. This proves the result. 
\end{proof}

\medskip

\begin{lem}\label{L3.3xptA} (a) Let $f:A\longrightarrow B$ be a morphism in $Comm(\mathcal C)$. Let $M$ be a finitely generated
$A$-module. Then, $M\otimes_AB$ is finitely generated as a $B$-module. 

\medskip
(b) Suppose that every object of $\mathcal C$ can be expressed as a directed colimit of its finitely generated subobjects in $\mathcal C$. Then, 
for any $A\in Comm(\mathcal C)$, every $A$-module can be expressed as a directed colimit of its finitely generated $A$-submodules. 
\end{lem}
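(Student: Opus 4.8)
The plan is to establish (a) from the extension-of-scalars/restriction-of-scalars adjunction along $f$, and then to deduce (b) from (a) applied to the unit morphism $1\to A$.

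For part (a), recall that $-\otimes_A B:A-Mod\to B-Mod$ is left adjoint to the restriction of scalars functor along $f$, which I denote $N\mapsto N|_A:B-Mod\to A-Mod$, so that $Hom_{B-Mod}(M\otimes_A B,N)\cong Hom_{A-Mod}(M,N|_A)$ naturally in $N$. Restriction of scalars preserves monomorphisms (being a right adjoint it preserves kernels) and preserves filtered colimits (filtered colimits in $A-Mod$ and in $B-Mod$ are computed in the underlying category $\mathcal C$, because $A\otimes-$ and $B\otimes-$ are left adjoints for the closed structure on $\mathcal C$, and $(-)|_A$ commutes with the forgetful functors to $\mathcal C$). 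Hence for any filtered system of monomorphisms $\{N_j\}_{j\in J}$ in $B-Mod$, the family $\{N_j|_A\}_{j\in J}$ is a filtered system of monomorphisms in $A-Mod$ with $colim_j(N_j|_A)\cong(colim_j N_j)|_A$, and then the finite generation of $M$ as an $A$-module yields
\begin{align*}
colim_j Hom_{B-Mod}(M\otimes_A B,N_j)&\cong colim_j Hom_{A-Mod}(M,N_j|_A)\\
&\cong Hom_{A-Mod}(M,(colim_j N_j)|_A)\\
&\cong Hom_{B-Mod}(M\otimes_A B,colim_j N_j),
\end{align*}
and one checks that the resulting isomorphism is the canonical comparison map, so $M\otimes_A B$ is finitely generated as a $B$-module.

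For part (b), fix $A\in Comm(\mathcal C)$ and $M\in A-Mod$. The functor $A\otimes-:\mathcal C\to A-Mod$ (sending $X$ to the free $A$-module $A\otimes X$) is left adjoint to the forgetful functor $A-Mod\to\mathcal C$, hence preserves all colimits; and by part (a) applied to the unit morphism $\eta:1\to A$ (using $\mathcal C=1-Mod$) it sends finitely generated objects of $\mathcal C$ to finitely generated $A$-modules. Using the hypothesis on the underlying object of $M$, write $M\cong colim_k M_k$ as a directed colimit of its finitely generated subobjects $M_k\hookrightarrow M$ in $\mathcal C$; then $A\otimes M\cong colim_k(A\otimes M_k)$ with every $A\otimes M_k$ a finitely generated $A$-module. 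The unit axiom for the $A$-module $M$ gives $a\circ(\eta\otimes\mathrm{id}_M)=\mathrm{id}_M$ for the action $a:A\otimes M\to M$, so $a$ is a split epimorphism. Set $A\cdot M_k:=Im(A\otimes M_k\to M)$, where $A\otimes M_k\to A\otimes M\xrightarrow{a}M$ is $A$-linear; as a quotient of $A\otimes M_k$ it is a finitely generated $A$-submodule of $M$, and $\{A\cdot M_k\}_k$ is a directed system of submodules of $M$. Since $\bigoplus_k(A\otimes M_k)\twoheadrightarrow colim_k(A\otimes M_k)=A\otimes M\xrightarrow{a}M$ is an epimorphism which factors through the canonical map $\bigoplus_k(A\cdot M_k)\to M$ of Proposition \ref{P2.6}(a), whose image is $\sum_k(A\cdot M_k)$, we obtain $\sum_k(A\cdot M_k)=M$; as the family is directed, this sum coincides with $colim_k(A\cdot M_k)$ (filtered colimits of monomorphisms are monomorphisms, filtered colimits commuting with finite limits in $A-Mod$), so $M\cong colim_k(A\cdot M_k)$. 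Finally, any finitely generated $A$-submodule $N$ of $M$ maps into this filtered colimit of monomorphisms and therefore, being finitely generated, factors through some $A\cdot M_k$ as a submodule; thus $\{A\cdot M_k\}_k$ is cofinal in the filtered system of all finitely generated $A$-submodules of $M$ (which is filtered by Proposition \ref{P3.2ct}(a)), and $M$ is the directed colimit of its finitely generated $A$-submodules.

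The step I expect to demand the most care is part (a): verifying, in this abstract monoidal setting, that restriction of scalars along $f$ preserves both monomorphisms and filtered colimits, and that the adjunction isomorphism is compatible with passing to the filtered colimit. In part (b) the one non-formal point is the exhaustion claim $\sum_k(A\cdot M_k)=M$, which hinges on the split epimorphism supplied by the unit axiom together with part (a); the remaining assertions are standard manipulations with sums of subobjects and cofinality.
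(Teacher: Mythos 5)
Your proof is correct and follows essentially the same route as the paper: part (a) via the extension/restriction-of-scalars adjunction combined with the fact that restriction preserves filtered colimits of monomorphisms, and part (b) by writing $M$ as a colimit of finitely generated subobjects $M_k$ in $\mathcal C$, using the unit axiom to see that the action $A\otimes M\to M$ is a (split) epimorphism, and taking the images of $A\otimes M_k\to M$ as the exhausting finitely generated $A$-submodules. The only difference is cosmetic: you add a cofinality argument to realize $M$ as the colimit of the full system of finitely generated submodules, whereas the paper stops at exhibiting one directed system, which already suffices for the statement.
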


\begin{proof} (a) We consider a filtered system of monomorphisms $\{N_i\}_{i\in I}$ in $B-Mod$. Then, we have:
\begin{equation}
\begin{array}{l}
\underset{i\in I}{colim}\textrm{ }Hom_{B-Mod}(M\otimes_AB,N_i)\cong \underset{i\in I}{colim}\textrm{ }Hom_{A-Mod}(M,N_i)\\
\cong Hom_{A-Mod}(M,\underset{i\in I}{colim}\textrm{ }
N_i)\cong Hom_{B-Mod}(M\otimes_AB,\underset{i\in I}{colim}\textrm{ }N_i)\\
\end{array}
\end{equation} This proves the result. 

\medskip
(b) Let $M$ be an $A$-module. Then, we can express $M$ as a directed colimit $M\cong \underset{i\in I}{\varinjlim}\textrm{ }{M_i}$
of its finitely generated subobjects $\{M_i\}_{i\in I}$ in $\mathcal C$. Then, $\underset{i\in I}{\varinjlim}\textrm{ }{M_i\otimes A}\overset{\cong}
{\longrightarrow}M\otimes A$. Let $m:M\otimes A\longrightarrow M$ be the morphism that makes $M$ into an $A$-module. Now since the composition $M\otimes 1\longrightarrow M\otimes A\overset{m}{\longrightarrow} M$ is an isomorphism in $A-Mod$,
$m:M\otimes A\longrightarrow M$ is an epimorphism and hence so is the composition
$f:\underset{i\in I}{\varinjlim}\textrm{ }{M_i\otimes A}\overset{\cong}
{\longrightarrow}M\otimes A\overset{m}{\longrightarrow}M$. We now consider the canonical morphisms:
\begin{equation}
f_i:M_i\otimes A\longrightarrow \underset{i\in I}{\varinjlim}\textrm{ }{M_i\otimes A}\overset{\cong}
{\longrightarrow}M\otimes A\overset{m}{\longrightarrow}M
\end{equation} for each $i\in I$. For each $i\in I$, we also set $M_i'$ to be the $A$-submodule
$Im(f_i)$ of $M$. Since the colimit $f$ of the morphisms $\{f_i\}_{i\in I}$ is an epimorphism, 
it is clear that $M\cong \underset{i\in I}{\varinjlim}\textrm{ }M_i'$. From part (a), we know that
since $M_i$ is finitely generated in $\mathcal C$ (i.e, as a $1$-module), $M_i\otimes A$ is finitely generated
as an $A$-module. Further, since
each $M_i'$ is the image of the finitely generated $A$-module $M_i\otimes A$,
it follows as in the proof of Proposition \ref{P3.2ct}(a) that $M_i'$ is finitely generated in $A-Mod$. Thus, the result follows.

\end{proof}

\medskip
Throughout this section
and in the rest of this paper, we will make the following assumption on commutative monoid objects in $\mathcal C$:

\medskip
(C1) Any object of $\mathcal C$ can be expressed as a directed colimit of its finitely generated subobjects in $\mathcal C$. 
Equivalently, for any commutative monoid $A\in Comm(\mathcal C)$, any module $M\in A-Mod$ can be expressed as  the directed
colimit of its finitely generated submodules. 

\medskip
In other words, the condition (C1) says that the category $\mathcal C$ is ``locally finitely generated''. The theory of
locally finitely generated abelian categories and indeed the theory of locally finitely generated Grothendieck
categories is fairly well developed in the literature. For more on this, the reader may see, for example,
\cite{Gar}, \cite{Prest}, \cite{Prest3}, \cite{Prest2} or \cite{Sten}. 

\medskip

\begin{lem}\label{L3.3xpt} Let $X$ be a quasi-compact and semi-separated scheme over $(\mathcal C,\otimes,1)$ and let 
$\mathcal M$ be a quasi-coherent sheaf on $X$. Then, if $\{U_j\}_{j\in J}$ 
is an affine Zariski cover of $X$ such that $\mathcal M|_{U_j}$ is finitely generated over $U_j$ for each $j\in J$, $\mathcal M$ is a finitely
generated quasi-coherent sheaf on $X$.
\end{lem}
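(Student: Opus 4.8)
The aim is to verify that $\mathcal{M}(U)$ is finitely generated over $\mathcal{O}_X(U)$ for \emph{every} $U\in ZarAff(X)$. The plan is to reduce this to a purely affine assertion and then settle the affine case using hypothesis (C1) together with flatness of Zariski open immersions and the isomorphism-detecting property built into the definition of a covering.

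For the reduction, I would first use quasi-compactness of $X$ to replace $\{U_j\}_{j\in J}$ by a finite subcover, which I write $\{U_1,\dots,U_n\}$. Fix $U\in ZarAff(X)$. Since $X$ is semi-separated, each fibre product $U\times_X U_m$ lies in $ZarAff(X)$, and the projection $U\times_X U_m\to U_m$, being a base change of $U\to X$, is a Zariski open immersion, so $U\times_X U_m\in ZarAff(U_m)$; hence $\mathcal{M}(U\times_X U_m)$ is finitely generated over $\mathcal{O}_X(U\times_X U_m)$ by the hypothesis that $\mathcal{M}|_{U_m}$ is finitely generated. Base-changing the epimorphism $\coprod_m U_m\to X$ along $U\to X$ shows that $\{U\times_X U_m\to U\}_{m=1}^{n}$ is a finite affine cover of $U=Spec(\mathcal{O}_X(U))$, and by quasi-coherence of $\mathcal{M}$ one has $\mathcal{M}(U\times_X U_m)\cong \mathcal{M}(U)\otimes_{\mathcal{O}_X(U)}\mathcal{O}_X(U\times_X U_m)$. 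So it is enough to prove the following affine statement: for $A\in Comm(\mathcal{C})$, $M\in A-Mod$, and a finite covering $\{Spec(A_l)\to Spec(A)\}_{l=1}^{k}$ with each $M\otimes_A A_l$ finitely generated over $A_l$, the module $M$ is finitely generated over $A$.

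To prove this affine statement, use (C1) (equivalently Lemma \ref{L3.3xptA}(b)) to write $M$ as the directed colimit $M\cong\varinjlim_{i\in I}M_i$ of its family of finitely generated $A$-submodules. Fix $l$. Since $A_l$ is flat over $A$, the functor $-\otimes_A A_l$ preserves monomorphisms and colimits, so $M\otimes_A A_l\cong\varinjlim_i (M_i\otimes_A A_l)$ is a directed colimit of submodules, each structure map $M_i\otimes_A A_l\to M\otimes_A A_l$ being a monomorphism because filtered colimits commute with finite limits in $A_l-Mod$ (exactly as in the proof of Proposition \ref{P3.2ct}(a)). As $M\otimes_A A_l$ is finitely generated, the identity of $M\otimes_A A_l$ factors through some $M_{i_l}\otimes_A A_l$, making that monomorphism a split epimorphism, hence an isomorphism. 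Picking $i_0\in I$ dominating $i_1,\dots,i_k$, the factorization $M_{i_l}\otimes_A A_l\hookrightarrow M_{i_0}\otimes_A A_l\hookrightarrow M\otimes_A A_l$ with isomorphic composite forces $M_{i_0}\otimes_A A_l\to M\otimes_A A_l$ to be an isomorphism, for every $l$. By the definition of a covering, the inclusion $M_{i_0}\hookrightarrow M$, which becomes an isomorphism after $-\otimes_A A_l$ for each $l$ in the finite covering, is itself an isomorphism, so $M\cong M_{i_0}$ is finitely generated.

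The point that requires care is the affine statement: it hinges on combining the categorical description of finitely generated objects (the identity map factoring through one stage of a directed union of subobjects) with flatness of the $A_l$ and the isomorphism-detecting property of coverings. The passage from $X$ to the affine case in the first step is routine bookkeeping with fibre products, relying on semi-separatedness to keep intersections affine and on quasi-compactness to keep the cover finite.
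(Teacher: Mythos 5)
Your argument is correct, and the reduction to the affine case (finite subcover by quasi-compactness, intersections $U\times_XU_m$ affine by semi-separatedness, quasi-coherence giving $\mathcal M(U\times_XU_m)\cong\mathcal M(U)\otimes_{\mathcal O_X(U)}\mathcal O_X(U\times_XU_m)$) is exactly the one the paper uses. Where you diverge is in the affine step itself. The paper verifies the defining property of finite generation head-on: it takes an arbitrary morphism $g:\mathcal M(U)\longrightarrow \mathrm{colim}_{i}N_i$ into a filtered colimit of monomorphisms, produces local factorizations $g\otimes_BB_j$ through $N_{i_0}\otimes_BB_j$ (unique because the maps to the colimit are monomorphisms), and then glues them into a factorization of $g$ using the limit description of $\mathcal M(U)$ and $N_{i_0}$ over the covering from \cite[Corollaire 2.11]{TV}. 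You instead invoke the standing hypothesis (C1) to write $M$ as the directed colimit of its finitely generated submodules, show via flat base change that some single stage $M_{i_0}$ becomes an isomorphism onto $M$ after $-\otimes_AA_l$ for every $l$, and conclude $M=M_{i_0}$ from the isomorphism-detecting property built into the definition of a Zariski covering. Both routes are sound. Yours replaces the descent/gluing computation by the more elementary isomorphism-detection axiom and yields the slightly stronger conclusion that $M$ literally coincides with one of its finitely generated submodules; the cost is that it leans on (C1), which the paper's version of this particular lemma does not need (though (C1) is assumed throughout the section, so this is harmless here). One small economy in the paper worth noting: it deduces finite generation of $\mathcal M(U\times_XU_j)$ from finite generation of $\mathcal M(U_j)$ alone via Lemma \ref{L3.3xptA}(a), whereas you read it off directly from the definition of $\mathcal M|_{U_j}$ being finitely generated; both are legitimate given how the paper defines finitely generated quasi-coherent sheaves.
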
 

\begin{proof}
Since $X$ is quasi-compact, we may suppose that $\{U_j=Spec(A_j)\}_{j\in J}$ is a finite affine cover. Then,
$\mathcal M(U_j)$ is finitely generated as an $A_j$-module for each $j\in J$. We choose
any $U=Spec(B)\in ZarAff(X)$. Further, since $X$ is semi-separated, each $U\times_XU_j=Spec(B_j)$ is affine
and the $Spec(B_j)$, $j\in J$ form a finite affine cover of $Spec(B)$. From Lemma \ref{L3.3xptA}(a), it follows that $\mathcal M_j:=\mathcal M(U\times_XU_j)=
\mathcal M(U_j)\otimes_{A_j}B_j$ is finitely generated as a $B_j$-module. 

\medskip
We now consider a filtered system of monomorphisms $\{N_i\}_{i\in I}$ in $B-Mod$ and a morphism
$g:\mathcal M(U)\longrightarrow colim_{i\in I}\textrm{ }N_i$ of $B$-modules. Since each $B_j$ is flat as a $B$-module,
$\{N_i\otimes_BB_j\}_{i\in I}$ is a filtered system of monomorphisms in $B_j-Mod$ for each $j\in J$.  Since $\mathcal M_j$ is a finitely
generated $B_j$-module, $J$ is finite and $I$ is filtered, we can choose $i_0\in I$ such that
$g\otimes_BB_j:\mathcal M_j=\mathcal M(U\times_XU_j)=\mathcal M(U)\otimes_BB_j\longrightarrow colim_{i\in I}N_i\otimes_BB_j$ 
factors through $N_{i_0}\otimes_BB_j$ for each $j\in J$. Further, since $N_{i_0}\otimes_BB_j\longrightarrow 
colim_{i\in I}N_i\otimes_BB_j$  is a monomorphism, the morphism $g\otimes_BB_j:\mathcal M_j=\mathcal M(U\times_XU_j)=\mathcal M(U)\otimes_BB_j\longrightarrow colim_{i\in I}N_i\otimes_BB_j$ 
factors uniquely through $N_{i_0}\otimes_BB_j$. Finally, since $\{Spec(B_j)\longrightarrow Spec(B)=U\}_{j\in J}$ forms a finite affine cover,
it follows from \cite[Corollaire 2.11]{TV} that:
\begin{equation}
\begin{array}{c}
\mathcal M(U)=lim\left(\prod_{j\in J}\mathcal M(U)\otimes_{B}B_j \overset{\longrightarrow}{\underset{\longrightarrow}{ }}\prod_{j,j'\in J}
\mathcal M(U)\otimes_BB_j\otimes_BB_{j'}\right) \\
N_{i_0}=lim\left(\prod_{j\in J}N_{i_0}\otimes_{B}B_j \overset{\longrightarrow}{\underset{\longrightarrow}{ }}\prod_{j,j'\in J}
N_{i_0}\otimes_BB_j\otimes_BB_{j'}\right) \\ 
\end{array}
\end{equation} and hence the morphisms $\mathcal M(U)\otimes_BB_j\longrightarrow N_{i_0}\otimes_BB_j$ can be glued
together to give a morphism $\mathcal M(U)\longrightarrow N_{i_0}$. 
 
\end{proof}

\medskip
\begin{lem}\label{L3.03} Let $U$ be a scheme over $(\mathcal C,\otimes,1)$ that is also quasi-compact. Let $\mathcal N$ be a finitely
generated quasi-coherent sheaf on $U$. Suppose that $\mathcal N$ can be expressed as a filtered colimit $\mathcal N
=\underset{\lambda\in \Lambda}{\varinjlim}\textrm{ }\mathcal N_\lambda$ of quasi-coherent submodules $\mathcal N_\lambda$. 
Then, there exists $\lambda_0\in \Lambda$ such that $\mathcal N=\mathcal N_{\lambda_0}$. 
\end{lem}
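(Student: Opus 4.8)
The plan is to certify the finite generation of $\mathcal N$ against the given filtered system by working on a finite affine cover and then descending. Since $U$ is quasi-compact, I would first fix a finite affine cover $\{U_j=Spec(A_j)\}_{j\in J}$ of $U$. Because $-\otimes_{\mathcal O_X(V)}\mathcal O_X(W)$ preserves colimits for every Zariski open immersion $W\longrightarrow V$ in $ZarAff(U)$, the presheaf $V\mapsto\varinjlim_{\lambda}\mathcal N_\lambda(V)$ already satisfies the quasi-coherence condition; hence filtered colimits in $QCoh(U)$ are computed sectionwise on $ZarAff(U)$, and in particular $\mathcal N(U_j)=\varinjlim_{\lambda\in\Lambda}\mathcal N_\lambda(U_j)$ for each $j\in J$. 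Here each cocone map $\mathcal N_\lambda(U_j)\longrightarrow\mathcal N(U_j)$ is the inclusion of an $A_j$-submodule, and for $\lambda\le\mu$ the transition map $\mathcal N_\lambda(U_j)\longrightarrow\mathcal N_\mu(U_j)$ is a monomorphism, as one sees by composing it with the monomorphism into $\mathcal N(U_j)$.

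Next, fixing $j\in J$, I would apply the hypothesis that $\mathcal N(U_j)$ is finitely generated as an $A_j$-module to the filtered system of monomorphisms $\{\mathcal N_\lambda(U_j)\}_{\lambda\in\Lambda}$: the canonical isomorphism $\varinjlim_{\lambda}Hom_{A_j-Mod}(\mathcal N(U_j),\mathcal N_\lambda(U_j))\overset{\cong}{\longrightarrow}Hom_{A_j-Mod}(\mathcal N(U_j),\mathcal N(U_j))$ forces $\mathrm{id}_{\mathcal N(U_j)}$ to factor as $\mathcal N(U_j)\overset{s_j}{\longrightarrow}\mathcal N_{\lambda_j}(U_j)\hookrightarrow\mathcal N(U_j)$ for some $\lambda_j\in\Lambda$. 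Thus the inclusion $\mathcal N_{\lambda_j}(U_j)\hookrightarrow\mathcal N(U_j)$ is a split epimorphism as well as a monomorphism, hence an isomorphism, i.e.\ $\mathcal N_{\lambda_j}(U_j)=\mathcal N(U_j)$. Since $\Lambda$ is filtered and $J$ is finite, I would then choose $\lambda_0\in\Lambda$ with $\lambda_0\ge\lambda_j$ for every $j\in J$; the chain $\mathcal N(U_j)=\mathcal N_{\lambda_j}(U_j)\subseteq\mathcal N_{\lambda_0}(U_j)\subseteq\mathcal N(U_j)$ then gives $\mathcal N_{\lambda_0}(U_j)=\mathcal N(U_j)$ for all $j\in J$.

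Finally I would promote this to the equality $\mathcal N=\mathcal N_{\lambda_0}$ of quasi-coherent submodules of $\mathcal N$. The inclusion $\iota:\mathcal N_{\lambda_0}\hookrightarrow\mathcal N$ is a monomorphism in $QCoh(U)$; its cokernel $\mathcal Q$ is also computed sectionwise (base change $-\otimes_{\mathcal O_X(V)}\mathcal O_X(W)$ being right exact), so $\mathcal Q$ is quasi-coherent with $\mathcal Q(U_j)=Coker(\mathcal N_{\lambda_0}(U_j)\hookrightarrow\mathcal N(U_j))=0$ for all $j\in J$. For an arbitrary $V=Spec(B)\in ZarAff(U)$ the Zariski open immersions $\{V\times_UU_j\longrightarrow V\}_{j\in J}$ cover $V$; refining them to an affine cover of $V$ and using that $V=Spec(B)$ is quasi-compact, I would pass to a finite affine cover $\{V_k'\}_{k}$ of $V$ in which every $V_k'$ admits Zariski open immersions $V_k'\longrightarrow V$ and $V_k'\longrightarrow U_{j(k)}$ for some $j(k)\in J$. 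Quasi-coherence of $\mathcal Q$ gives $\mathcal Q(V_k')=\mathcal Q(U_{j(k)})\otimes_{A_{j(k)}}\mathcal O_X(V_k')=0$, and then \cite[Corollaire 2.11]{TV} presents $\mathcal Q(V)$ as the limit of a finite diagram built from the vanishing objects $\mathcal Q(V_k')$, whence $\mathcal Q(V)=0$. Therefore $\mathcal Q=0$, $\iota$ is an isomorphism, and $\mathcal N=\mathcal N_{\lambda_0}$.

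I expect the only delicate point to be this last descent step: it uses that base change along a Zariski open immersion is (right) exact, so that the cokernel of a morphism of quasi-coherent sheaves is again quasi-coherent and sectionwise, together with the sheaf condition \cite[Corollaire 2.11]{TV}. Note that no semi-separatedness of $U$ is needed, since the relevant fibre products of affines are all taken over the affine base $Spec(B)$ and hence remain affine.
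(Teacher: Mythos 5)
Your proof is correct and follows essentially the same route as the paper: restrict to a finite affine cover, use finite generation of each $\mathcal N(U_j)$ against the filtered system of monomorphic inclusions $\{\mathcal N_\lambda(U_j)\}$ to find $\lambda_j$ with $\mathcal N_{\lambda_j}(U_j)=\mathcal N(U_j)$, pass to a common $\lambda_0$ by filteredness, and glue. The only difference is that you spell out the final gluing step (via the vanishing of the sectionwise cokernel and the sheaf condition of \cite[Corollaire 2.11]{TV}), which the paper simply asserts; your elaboration is sound.
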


\begin{proof} Since $U$ is quasi-compact, we can choose a finite affine cover $U_i=Spec(A_i)$, $1\leq i\leq m$ of $U$. Then, for each
$1\leq i\leq m$, we can express $\mathcal N(U_i)$ as a filtered colimit $\mathcal N(U_i)\cong \underset{\lambda\in\Lambda}{\varinjlim}
\textrm{ }\mathcal N_\lambda(U_i)$ of submodules. Since 
$\mathcal N$ is finitely generated,  there exists $\lambda_i\in \Lambda$ such that the identity map $\mathcal N(U)\longrightarrow \mathcal N(U)$ 
 factors through the monomorphism $\mathcal N_{\lambda_i}(U_i)\longrightarrow \mathcal N(U_i)$. 
Hence, $\mathcal N_{\lambda_i}(U_i)\longrightarrow \mathcal N(U_i)$ is also an epimorphism. It follows that $\mathcal N_{\lambda_i}(U_i)
\cong \mathcal N(U_i)$ (since it is both an epimorphism and a monomorphism and $A_i-Mod$ is an abelian category). 
Since $U_i$ is affine, we now know that $\mathcal N_{\lambda_i}|U_i=\mathcal N|U_i$. 

\medskip
Finally, since $\Lambda$ is filtered and we have only finitely many $U_i$, it follows that we can choose $\lambda_0\in \Lambda$ such that $\mathcal N_{\lambda_0}|U_i=\mathcal N|U_i$ for
all $1\leq i\leq m$. Since the $U_i$ form an affine cover of $U$, it now follows that $\mathcal N_{\lambda_0}=\mathcal N$. 

\end{proof}

\medskip
\begin{thm}\label{P3.3} Let $A\in Comm(\mathcal C)$ be a commutative monoid object and let $X=Spec(A)$ be the affine
scheme corresponding to $A$. Let $U$ be a quasi-compact scheme over $(\mathcal C,\otimes,1)$ along with a Zariski open immersion $i:U\longrightarrow X$  that is also quasi-compact. 
Then, for any quasi-coherent sheaf $\mathcal M$ on $X$ and a finitely generated quasi-coherent submodule $\mathcal N$ of
the restriction $i^*\mathcal M=\mathcal M|_U$, there exists a finitely generated quasi-coherent submodule $\mathcal N'$ of
$\mathcal M$ such that $\mathcal N'|_U=\mathcal N$. 
\end{thm}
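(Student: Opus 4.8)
The plan is to first produce a (possibly very large) quasi-coherent submodule $\mathcal N'_0\subseteq\mathcal M$ whose restriction to $U$ is exactly $\mathcal N$, and then, using hypothesis (C1) together with Lemmas \ref{L3.3xpt} and \ref{L3.03}, to carve a finitely generated quasi-coherent submodule out of $\mathcal N'_0$ that still restricts to all of $\mathcal N$.

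\textbf{Preliminaries.} Since $X=Spec(A)$ is affine it is quasi-compact and semi-separated, $QCoh(Spec(A))=A-Mod$ via $\mathcal F\mapsto\mathcal F(X)$, and we write $\widetilde N$ for the quasi-coherent sheaf on $X$ attached to an $A$-module $N$, so that $\widetilde N(Spec(B))=N\otimes_AB$. The morphism $i:U\longrightarrow X$ is a Zariski open immersion, hence a monomorphism; combined with $X$ being affine this forces $i$ to be semi-separated, and $i$ is quasi-compact by hypothesis. Hence by Proposition \ref{P2.4}(b) the pushforward $i_*:QCoh(U)\longrightarrow QCoh(X)$ exists and is right adjoint to $i^*$, and by Proposition \ref{P2.5qp} the counit $\varepsilon:i^*i_*\longrightarrow 1$ is an isomorphism of functors on $QCoh(U)$. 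Choosing a finite affine cover $\{W_k=Spec(C_k)\}_{k=1}^{r}$ of $U$ (possible since $U$ is quasi-compact), each composite $W_k\longrightarrow U\longrightarrow X$ is a Zariski open immersion of affine schemes, so $C_k$ is flat over $A$; since $i^*$ is given over $W_k$ by $N\mapsto N\otimes_AC_k$, it follows that $i^*:QCoh(X)\longrightarrow QCoh(U)$ is exact. The same monomorphism-plus-affine argument shows that $U$ is itself quasi-compact and semi-separated, so Lemmas \ref{L3.3xpt} and \ref{L3.03} are available for $U$.

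\textbf{The maximal extension.} Let $\iota:\mathcal N\hookrightarrow i^*\mathcal M$ be the given inclusion and $\eta:\mathcal M\longrightarrow i_*i^*\mathcal M$ the unit. As $i_*$ is left exact, $i_*\iota:i_*\mathcal N\longrightarrow i_*i^*\mathcal M$ is a monomorphism, so we may form the pullback $\mathcal N'_0:=\mathcal M\times_{i_*i^*\mathcal M}i_*\mathcal N$, a quasi-coherent submodule of $\mathcal M$ (morally, the largest one whose restriction to $U$ lies inside $\mathcal N$). Applying the exact functor $i^*$ to this Cartesian square and using the triangle identity $\varepsilon_{i^*\mathcal M}\circ i^*\eta=1_{i^*\mathcal M}$, the map $i^*\eta$ is an isomorphism, whence so is $i^*$ of the projection $\mathcal N'_0\longrightarrow i_*\mathcal N$; composing it with $\varepsilon_{\mathcal N}$ and invoking naturality of $\varepsilon$ at $\iota$, one checks that the resulting isomorphism $i^*\mathcal N'_0\overset{\cong}{\longrightarrow}\mathcal N$ is compatible with the embeddings into $i^*\mathcal M$, i.e.\ $\mathcal N'_0|_U=\mathcal N$ as subsheaves of $\mathcal M|_U$. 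This reconciliation of the unit/counit data with the exactness of $i^*$ and Proposition \ref{P2.5qp} is the one genuinely delicate point; everything around it is bookkeeping, and the exactness of $i^*$ used here rests on the flatness of Zariski open immersions of affines.

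\textbf{Passing to finite generation.} Put $N'_0:=\mathcal N'_0(X)$. By (C1) (equivalently Lemma \ref{L3.3xptA}(b)), $N'_0$ is the directed colimit of its finitely generated $A$-submodules $P$; for each such $P$ the sheaf $\widetilde P$ is a finitely generated quasi-coherent submodule of $\mathcal N'_0$ (finitely generated because $P\otimes_AB$ is a finitely generated $B$-module for every $Spec(B)\in ZarAff(X)$ by Lemma \ref{L3.3xptA}(a)), and $\mathcal N'_0=\varinjlim_P\widetilde P$ since $QCoh(Spec(A))=A-Mod$. Applying $i^*$, which preserves this filtered colimit (being a left adjoint) and the inclusions (being exact), gives $\mathcal N=i^*\mathcal N'_0=\varinjlim_P i^*\widetilde P$, a filtered colimit of quasi-coherent submodules of $\mathcal N$; each $i^*\widetilde P$ is finitely generated over $U$ by Lemma \ref{L3.3xpt}, since over each $W_k$ it equals $P\otimes_AC_k$, which is finitely generated by Lemma \ref{L3.3xptA}(a). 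As $\mathcal N$ is finitely generated, Lemma \ref{L3.03} furnishes some $P_0$ with $i^*\widetilde{P_0}=\mathcal N$. Then $\mathcal N':=\widetilde{P_0}$ is a finitely generated quasi-coherent submodule of $\mathcal M$ — it sits inside $\mathcal N'_0\subseteq\mathcal M$ — and $\mathcal N'|_U=i^*\widetilde{P_0}=\mathcal N$, as required.
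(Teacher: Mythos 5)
Your proposal is correct and follows essentially the same route as the paper: the paper's $\overline{\mathcal N}$ is defined by exactly the pullback $\mathcal M\times_{i_*i^*\mathcal M}i_*\mathcal N$ you call $\mathcal N'_0$ (see \eqref{3.5}), its restriction to $U$ is identified with $\mathcal N$ via Proposition \ref{P2.5qp}, and the finitely generated submodule is then extracted by writing the corresponding $A$-module as a directed colimit of finitely generated submodules (condition (C1)) and invoking Lemma \ref{L3.03}. Your only deviations are cosmetic — you verify $\mathcal N'_0|_U=\mathcal N$ through the triangle identity and exactness of $i^*$ where the paper computes the limit sectionwise, and you add the (harmless, not needed for Lemma \ref{L3.03}) check that each $i^*\widetilde P$ is finitely generated.
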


\begin{proof} Since $X$ is affine and hence semi-separated, it is clear that the open immersion $i$ is semi-separated. Now, since 
$i_*$ is a right adjoint, it follows that $i_*\mathcal N$ is a quasi-coherent submodule of $i_*i^*\mathcal M=i_*(\mathcal M|_U)$. 
We now set:
\begin{equation}\label{3.5}
\overline{\mathcal N}(V):=lim(\mathcal M(V)\longrightarrow (i_*i^*\mathcal M)(V)\longleftarrow i_*\mathcal N(V))\qquad
\forall\textrm{ }V\in ZarAff(X)
\end{equation} Since $\overline{\mathcal N}$ is defined in terms of a finite limit in \eqref{3.5}, it follows that $\overline{\mathcal N}
\in QCoh(X)$. Further, since $i_*\mathcal N\longrightarrow i_*i^*\mathcal M$ is a monomorphism, it follows from the limit
in \eqref{3.5} that $\overline{\mathcal N}(V)\longrightarrow \mathcal M(V)$ is a monomorphism, i.e., 
$\overline{\mathcal N}$ is a quasi-coherent submodule of $\mathcal M$.  We now consider the restriction
$\overline{\mathcal N}|_U$. For any $W\in ZarAff(U)$, it follows from \eqref{3.5} that:
\begin{equation}\label{3.6ct}
(\overline{\mathcal N}|_U)(W)=\overline{\mathcal N}(W)=lim(\mathcal M(W)\longrightarrow (i_*i^*\mathcal M)(W)\longleftarrow i_*\mathcal N(W))
\end{equation} Since $W\in ZarAff(U)$, we notice that  $\mathcal M(W)=i^*\mathcal M(W)$, $(i_*i^*\mathcal M)(W)=((i^*i_*)(i^*\mathcal M))(W)$and $ i_*\mathcal N(W)=(i^*i_*\mathcal N)(W)$. 
Then, it follows from Proposition \ref{P2.5qp} that
$(i_*i^*\mathcal M)(W)=i^*\mathcal M(W)$ and $i_*\mathcal N(W)=\mathcal N(W)$. Combining this with \eqref{3.6ct}, it follows that
\begin{equation}\label{3.7ct}
(\overline{\mathcal N}|_U)(W)=lim(i^*\mathcal M(W)\longrightarrow i^*\mathcal M(W)\longleftarrow \mathcal N(W))
=\mathcal N(W)\qquad\forall\textrm{ }W\in ZarAff(U)
\end{equation} Thus, $\overline{\mathcal N}$ is a quasi-coherent submodule of $\mathcal M$ such that $\overline{\mathcal N}|_U=\mathcal N$. Further,
since $X=Spec(A)$ is affine, we know that $\overline{\mathcal N}\in QCoh(X)$ corresponds to an $A$-module $\overline{N}$. Using Proposition \ref{P3.2ct}(a) and  condition (C1) towards the beginning of this section, it follows
that the $A$-module $\overline{N}$ may be expressed as a filtered colimit of its finitely generated submodules.   Accordingly, we can express $\overline{\mathcal N}$ as a filtered colimt $\overline{\mathcal N}=\underset{\lambda\in \Lambda}{\varinjlim}\textrm{ }
\mathcal N_\lambda$, where $\mathcal N_\lambda$ is the quasi-coherent submodule of $\overline{\mathcal N}$ corresponding to a finitely
generated submodule $N_\lambda$ of $\overline{N}$. From Lemma \ref{L3.3xptA}(a), it follows that the quasi-coherent module
$\mathcal N_\lambda$ on $Spec(A)$ corresponding to a finitely generated submodule module $N_\lambda$ of
$\overline{N}$ is also finitely generated as a quasi-coherent sheaf on $X=Spec(A)$. Since the restriction $i^*$ is a left adjoint, it now follows that:
\begin{equation}\label{3.8tv}
\mathcal N=\overline{\mathcal N}|_U=\underset{\lambda\in \Lambda}{\varinjlim}\textrm{ }\mathcal N_\lambda|_U
\end{equation} As noted before, the system $\Lambda$ is filtered. Since $\mathcal N$ is finitely generated, it now follows
from Lemma \ref{L3.03} that  there exists some $\lambda_0\in \Lambda$ such that $\mathcal N=\mathcal N_{\lambda_0}|U$. This proves
the result. 

\end{proof}

\medskip

Let $X$ be a quasi-compact scheme over $(\mathcal C,\otimes,1)$ and let $\{U_i\}_{1\leq i\leq n}$ be a finite affine cover of $X$. Then, we know that the scheme $X$ can actually be written as a quotient $Y/R$, where
$Y=\coprod_{i=1}^nU_i$ is the disjoint union of the affine schemes $U_i$ and $R\subseteq Y\times Y$ is an equivalence relation
in $Sh(Aff_{\mathcal C})$ satisfying certain conditions described in \cite[Proposition 2.18]{TV}. We now recall the following construction from
\cite[$\S$ 4]{AB2}: For any $1\leq m\leq n$, we set $Y_m:=\coprod_{i=1}^mU_i$ and consider the equivalence relation $R_m$ on
$Y_m$ defined as follows:
\begin{equation}\label{QE3.9}
\begin{CD}
R_m:=R\times_{(Y\times Y)}(Y_m\times Y_m) @>>> Y_m\times Y_m\\
@VVV @VVV \\
R @>>> Y\times Y \\
\end{CD}
\end{equation} Then, we have shown in \cite[Lemme 4.3]{AB2} the following result:

\medskip
\begin{thm}\label{P3.5qm} For any given $1\leq m\leq n$, consider the disjoint union $Y_m=\coprod_{i=1}^mU_i$ as well as the equivalence relation $R_m$ as defined in \eqref{QE3.9} above. Then, the induced
morphism $p_m:X_m\longrightarrow X$ from the scheme $X_m:=Y_m/R_m$ is a Zariski open immersion. 
\end{thm}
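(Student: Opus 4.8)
\emph{Proof idea.} The plan is to identify $X_m$ with the ``open union'' $\bigcup_{i=1}^m\mathrm{Im}(U_i\to X)$ inside $X$, exploiting that $Sh(Aff_{\mathcal C})$ is a Grothendieck topos (hence, in particular, an exact category) together with the stability properties of Zariski open immersions recorded in \cite[D\'{e}finition 2.12]{TV}. First, since $X=Y/R$ with $R$ an equivalence relation in the topos $Sh(Aff_{\mathcal C})$, this equivalence relation is effective: the canonical map $R\to Y\times_XY$ is an isomorphism and $Y\to X$ is a (regular) epimorphism. The coproduct inclusion $\iota_m\colon Y_m=\coprod_{i=1}^mU_i\hookrightarrow\coprod_{i=1}^nU_i=Y$ is a Zariski open immersion, being a direct summand inclusion (with ``complement'' $\coprod_{i=m+1}^nU_i$). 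Substituting $R\cong Y\times_XY$ into the defining pullback square \eqref{QE3.9} and manipulating fibre products, one obtains $R_m=R\times_{Y\times Y}(Y_m\times Y_m)\cong Y_m\times_XY_m$; in other words $R_m$ is exactly the kernel pair of the composite $g_m\colon Y_m\overset{\iota_m}{\hookrightarrow}Y\to X$. Since the hypotheses on the two projections $R\to Y$ are local and stable under the base change along $\iota_m\times\iota_m$, they descend to $(Y_m,R_m)$, which is why $X_m=Y_m/R_m$ is again a scheme with affine cover $\{U_i\to X_m\}_{1\le i\le m}$ (as in \cite[Proposition 2.18]{TV}).

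Now comes the heart of the argument. Because $R_m$ is the kernel pair of $g_m$ and $Sh(Aff_{\mathcal C})$ is exact, the coequalizer $X_m=Y_m/R_m$ coincides with the coimage of $g_m$, which in an exact category equals the image of $g_m$; hence the morphism $p_m\colon X_m\to X$ induced by $g_m$ is, up to isomorphism, the monomorphism $\mathrm{Im}(g_m)\hookrightarrow X$. In particular $p_m$ is a monomorphism. Moreover, in a topos images commute with coproducts, so
\[
X_m\;\cong\;\mathrm{Im}\!\Big(\coprod_{i=1}^mU_i\longrightarrow X\Big)\;=\;\bigcup_{i=1}^m\mathrm{Im}(U_i\to X)
\]
as subobjects of $X$. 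For each $i$, the hypothesis that $U_i\to X$ is a Zariski open immersion means it factors as an isomorphism $U_i\xrightarrow{\ \sim\ }\mathrm{Im}(U_i\to X)$ followed by a Zariski open immersion $\mathrm{Im}(U_i\to X)\hookrightarrow X$. Thus $X_m$ is a finite union of Zariski open subobjects of $X$; since Zariski open immersions into $X$ are stable under finite unions (as well as under composition and base change), we conclude that $p_m\colon X_m\hookrightarrow X$ is a Zariski open immersion. For $m=n$ this recovers $X_n\cong X$, as it should.

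I expect the main obstacle to be bookkeeping rather than any single hard step: one must (i) derive $R_m\cong Y_m\times_XY_m$ cleanly from effectivity of equivalence relations in a topos; (ii) invoke the correct exactness statements for $Sh(Aff_{\mathcal C})$, namely coimage $=$ image and compatibility of images with coproducts; and (iii) use the closure of the class of Zariski open immersions under finite unions, which is where the local nature of \cite[D\'{e}finition 2.12]{TV} is needed, together with a cancellation property (if $g\circ f$ and $g$ are Zariski open immersions, then so is $f$) to see that the affine cover $\{U_i\}_{i\le m}$ is compatible with the inclusion $X_m\hookrightarrow X$. An alternative, slightly more hands-on route is induction on $m$: write $X_{m+1}$ as the pushout $X_m\sqcup_WU_{m+1}$ along the Zariski open subscheme $W:=X_m\times_XU_{m+1}$ of both $X_m$ and $U_{m+1}$, and check that this pushout maps to $X$ by a Zariski open immersion; this uses essentially the same stability inputs.
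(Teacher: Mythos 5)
The paper does not actually prove this proposition here: it is recalled verbatim from \cite[Lemme 4.3]{AB2}, so there is no in-text argument to compare yours against. Judged on its own terms, your argument is correct and is the natural topos-theoretic route. The chain of reductions is sound: since $Sh(Aff_{\mathcal C})$ is a Grothendieck topos, the equivalence relation $R$ is effective, so $R\cong Y\times_XY$; substituting this into \eqref{QE3.9} gives $R_m\cong(Y_m\times Y_m)\times_{Y\times Y}(Y\times_XY)\cong Y_m\times_XY_m$, the kernel pair of $g_m$; exactness then identifies $X_m=Y_m/R_m$ with $\mathrm{Im}(g_m)$, so $p_m$ is a monomorphism and $X_m=\bigcup_{i\le m}U_i$ as a subobject of $X$ (images of coproducts being unions of images). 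The one step you rightly flag as load-bearing is closure of Zariski open subobjects under (finite) unions; this does hold for \cite[D\'efinition 2.12]{TV}, precisely because that definition only asks that, after base change to any affine $W\to X$, the subsheaf be the union of a family of affine Zariski opens of $W$ --- a condition visibly stable under unions since pullback preserves unions of subobjects in a topos. Two minor points worth writing out if you formalize this: (i) your claim that $\iota_m:Y_m\hookrightarrow Y$ is a Zariski open immersion uses disjointness of coproducts in the topos (a map from an affine into $\coprod_iU_i$ only factors through the summands locally, so $Y_m\times_YW$ is a union of affine opens of $W$, not a single summand); and (ii) the descent of the hypotheses of \cite[Proposition 2.18]{TV} to $(Y_m,R_m)$, which you assert in one line, follows by writing $\mathrm{pr}_1:R_m\to Y_m$ as a base change of $\iota_m$ followed by a base change of $\mathrm{pr}_1:R\to Y$. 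Neither point is a gap, just bookkeeping, and your suggested induction-on-$m$ variant (gluing $X_{m-1}$ and $U_m$ along $X_{m-1}\times_XU_m$) is closer in spirit to how the companion paper \cite{AB2} organizes the construction and to how Proposition \ref{P3.6cp} later consumes this lemma.
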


\medskip
We note that the result of Proposition \ref{P3.3} applies in particular to open immersions $U=Spec(B)\longrightarrow X=Spec(A)$ of affine schemes. We will now extend this to the case where $X$ is any    quasi-compact and semi-separated scheme over $(\mathcal C,\otimes,1)$ and $U\in ZarAff(X)$. 

\medskip
\begin{thm}\label{P3.6cp} Let $X$ be a quasi-compact and semi-separated scheme over $(\mathcal C,\otimes,1)$ and let $U\in ZarAff(X)$. Then, for any quasi-coherent sheaf $\mathcal M$ on $X$ and any finitely generated quasi-coherent submodule $\mathcal N$ of
the restriction $\mathcal M|_U$, there exists a finitely generated quasi-coherent submodule $\mathcal N'$ of
$\mathcal M$ such that $\mathcal N'|_U=\mathcal N$. 
\end{thm}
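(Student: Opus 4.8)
The plan is to reduce the general quasi-compact semi-separated case to the affine case already handled in Proposition \ref{P3.3}, using the filtration of $X$ by the open subschemes $X_m = Y_m/R_m$ of Proposition \ref{P3.5qm}. First I would fix a finite affine cover $\{U_i = Spec(A_i)\}_{1\leq i\leq n}$ of $X$ with $U = U_1$ (after reindexing, we may assume $U$ is one of the cover elements, or rather work with the cover $\{U\} \cup \{U_i\}$ and use that $X$ is semi-separated so that the various intersections stay in $ZarAff(X)$). I would then run an induction on $m$, proving the following statement: for each $1\leq m\leq n$ with $U \subseteq X_m$, and for any quasi-coherent sheaf $\mathcal M$ on $X_m$ and any finitely generated quasi-coherent submodule $\mathcal N$ of $\mathcal M|_U$, there is a finitely generated quasi-coherent submodule of $\mathcal M$ on $X_m$ restricting to $\mathcal N$ on $U$. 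For $m$ with $X_m$ affine this is Proposition \ref{P3.3} (note $X_1 = U_1$ is affine); but more carefully, the base case is really when $U = X_m$, which is trivial.

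The inductive step is where the real work lies. Suppose the statement holds for $X_{m-1}$, and consider $X_m = X_{m-1} \cup U_m$ (meaning: $X_{m-1}\hookrightarrow X_m$ and $U_m \hookrightarrow X_m$ are Zariski open immersions whose images cover $X_m$, by Proposition \ref{P3.5qm} and the construction). Given $\mathcal M \in QCoh(X_m)$ and a finitely generated submodule $\mathcal N \subseteq \mathcal M|_U$: by the induction hypothesis applied to $\mathcal M|_{X_{m-1}}$ (using that $U \subseteq X_{m-1}$ — here one must be a little careful about whether $U$ lands in $X_{m-1}$, which is why I would choose the ordering of the cover so that $U = U_1$ sits in every $X_m$, $m\geq 1$), there is a finitely generated $\mathcal N_1 \subseteq \mathcal M|_{X_{m-1}}$ with $\mathcal N_1|_U = \mathcal N$. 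Separately, $U_m = Spec(A_m)$ is affine and $U \times_{X_m} U_m$ is affine since $X_m$ (being open in $X$) is semi-separated; the restriction $\mathcal N|_{U\times_{X_m}U_m}$ is a finitely generated submodule of $(\mathcal M|_{U_m})|_{U\times_{X_m}U_m}$, so by Proposition \ref{P3.3} applied to the affine $U_m$ with its quasi-compact open $U\times_{X_m}U_m$, there is a finitely generated $\mathcal N_2 \subseteq \mathcal M|_{U_m}$ with $\mathcal N_2|_{U\times_{X_m}U_m} = \mathcal N|_{U\times_{X_m}U_m}$. The two submodules $\mathcal N_1$ and $\mathcal N_2$ need not agree on the overlap $X_{m-1}\cap U_m$, so I would next replace them by their "sum" and "intersection" in a suitable sense: set $\mathcal N_1' := \mathcal N_1 + j_* \mathcal N_2'$-type corrections won't quite work directly, so instead I would form, on $X_m$, the submodule $\mathcal N'$ defined locally by gluing — concretely, use that $\mathcal N_1|_{X_{m-1}\cap U_m}$ and $\mathcal N_2|_{X_{m-1}\cap U_m}$ are both finitely generated submodules of $\mathcal M|_{X_{m-1}\cap U_m}$ that agree on the further open $U\times_{X_m}U_m$, and enlarge $\mathcal N_1$ on $X_{m-1}$ (via Proposition \ref{P3.3}/\ref{P3.6cp} inductively on the affine pieces of $X_{m-1}$) and $\mathcal N_2$ on $U_m$ so that they become equal on $X_{m-1}\cap U_m$ while staying finitely generated and still restricting to $\mathcal N$ over $U$. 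Once they agree on the overlap, since $\{X_{m-1}, U_m\}$ is an open cover of $X_m$ and $QCoh$ satisfies descent, they glue to $\mathcal N' \in QCoh(X_m)$; it is a submodule of $\mathcal M$ because being a monomorphism is local, and it is finitely generated by Lemma \ref{L3.3xpt} since its restrictions to the affine cover $\{U_1,\dots,U_m\}$ of $X_m$ are finitely generated; and $\mathcal N'|_U = \mathcal N$ by construction.

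The main obstacle is precisely the gluing/matching step: arranging that the two finitely generated submodules obtained on $X_{m-1}$ and on $U_m$ can be modified to agree on the (quasi-compact, semi-separated but not necessarily affine) overlap $X_{m-1}\cap U_m$ while preserving finite generation and the prescribed restriction over $U$. The clean way to organize this, which I would adopt, is to first prove the statement of Proposition \ref{P3.6cp} in the stronger "two-open" form — given a quasi-compact semi-separated $X$ covered by two opens $V, W$ with $W$ affine, and a finitely generated submodule over an affine $U \subseteq V$, extend it first over $V$ by induction on the number of affines covering $V$, then over all of $X$ by a single application of Proposition \ref{P3.3} on $W$ together with Lemma \ref{L3.03} (to see that after passing to a cofinal finitely generated piece the two extensions actually coincide on $V\cap W$). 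Lemma \ref{L3.03} is the tool that lets one promote "agree up to a filtered colimit" to "agree on the nose" on the quasi-compact overlap, and Lemma \ref{L3.3xpt} is what certifies finite generation of the glued sheaf; these two, plus the affine case Proposition \ref{P3.3}, are exactly the ingredients the induction needs.
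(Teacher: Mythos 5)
Your overall skeleton is the paper's: induct along the opens $X_m=Y_m/R_m$ of Proposition \ref{P3.5qm}, extend over one new affine $U_m$ at a time via Proposition \ref{P3.3}, glue, and certify finite generation with Lemma \ref{L3.3xpt}. The one place where you go astray is the middle of your inductive step, where you build \emph{two independent} extensions --- $\mathcal N_1$ on $X_{m-1}$ from $\mathcal N$, and $\mathcal N_2$ on $U_m$ from $\mathcal N|_{U\times_{X_m}U_m}$ --- and then try to "enlarge" them until they agree on $X_{m-1}\cap U_m$. That matching step is not just awkward, it is unnecessary, and neither Lemma \ref{L3.03} nor taking sums gives you a mechanism for forcing two unrelated finitely generated extensions to coincide on the overlap while preserving the restriction to $U$. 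The paper's resolution, which your final paragraph essentially rediscovers, is to apply Proposition \ref{P3.3} on the affine $U_m$ with input the \emph{full overlap}: take the finitely generated submodule $\mathcal N_{m-1}|_{X_{m-1}\times_X U_m}$ of $\mathcal M|_{X_{m-1}\times_X U_m}$ (the overlap is quasi-compact because, by semi-separatedness, it is covered by the finitely many affines $U_i\times_X U_m$, $i<m$) and extend it to a finitely generated $\mathcal N_m'\subseteq \mathcal M|_{U_m}$. Then $\mathcal N_m'$ agrees with $\mathcal N_{m-1}$ on $X_{m-1}\times_X U_m$ \emph{by construction}, so the gluing is immediate and no further appeal to Lemma \ref{L3.03} is needed at this level (it is already consumed inside the proof of Proposition \ref{P3.3}). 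With that substitution your argument is the paper's proof.
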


\begin{proof} Since $X$ is quasi-compact, we can choose a finite affine cover $\{U_i\}$, $1\leq i\leq n$ of $X$ with $U_1=U$. For any $1\leq m\leq n$,
we consider the schemes $X_m$ and the open immersions $p_m:X_m\longrightarrow X$ as described above. We set $\mathcal N_1=\mathcal N$. For every such $m$, we want to define
a finitely generated quasi-coherent submodule $\mathcal N_m$ of $\mathcal M|_{X_m}$ such that $\mathcal N_m|_{X_i}=\mathcal N_i$ for all
$1\leq i\leq m$.  Suppose that we have successfully defined such sheaves $\mathcal N_i$ 
for every $i< m$. We will now describe how they can be used to define $\mathcal N_{m}$ on $X_{m}$. 

\medskip
We now choose some $W\in ZarAff(U_m)$ and consider the following fiber squares:
\begin{equation}\label{Qe3.10}
\begin{CD}
U_i\times_XW@>>> (X_{m-1}\times_XU_m)\times_XW@>>> W \\
@VVV @VVV @VVV \\
U_i\times_XU_m@>>> X_{m-1}\times_XU_m@>>> U_m\\
@VVV @VVV @VVV \\
U_i@>>> X_{m-1} @>p_{m-1}>> X \\
\end{CD} \qquad \forall\textrm{ }1\leq i<m
\end{equation} From Proposition \ref{P3.5qm}, we know that 
$p_{m-1}:X_{m-1}\longrightarrow X$ is a Zariski open immersion and hence so is $(X_{m-1}\times_XU_m)\longrightarrow U_m$. Since $X$ is semi-separated, $U_i\times_XW$ is affine for each $1\leq i<m$. From \eqref{Qe3.10}, it follows
that for any $W\in ZarAff(U_m)$, the fiber product $(X_{m-1}\times_XU_m)\times_XW$ has a finite affine 
covering $\{U_i\times_XW\}_{1\leq i<m}$ and is therefore quasi-compact. It follows that the Zariski open immersion
$(X_{m-1}\times_XU_m)\longrightarrow U_m$ is quasi-compact.  In particular, since $U_m$ is affine, i.e., 
$U_m\in ZarAff(U_m)$, it also follows that
$(X_{m-1}\times_XU_m)$ is quasi-compact.  We can now apply Proposition \ref{P3.3} to the   immersion
$(X_{m-1}\times_XU_m)\longrightarrow U_m$. 

\medskip
Accordingly, it follows that there exists a finitely generated quasi-coherent submodule $\mathcal N_m'$ of $\mathcal M|_{U_m}$ such that:
\begin{equation}\label{Qe3.11}
\mathcal N_m'|_{(X_{m-1}\times_XU_m)}=\mathcal N_{m-1}|_{(X_{m-1}\times_XU_m)}
\end{equation} From \eqref{Qe3.11}, it follows that the quasi-coherent sheaves $\mathcal N_{m-1}$  and $\mathcal N_m'$ on $X_{m-1}$ and $U_m$ respectively
agree on $X_{m-1}\times_XU_m$. Hence, we can consider the  quasi-coherent submodule $\mathcal N_m$ of  $\mathcal M|_{X_m}$ such that 
$\mathcal N_m|_{X_{m-1}}=\mathcal N_{m-1}$ and $\mathcal N_m|_{U_m}=\mathcal N_m'$. Further, the restriction of $\mathcal N_m$ to
$X_{m-1}$ and $U_m$ being finitely generated, it follows from Lemma \ref{L3.3xpt} that $\mathcal N_m$ is a finitely generated
quasi-coherent sheaf on $X_m$. 

\medskip It follows that $\mathcal N':=\mathcal N_n$ is a finitely generated
quasi-coherent submodule of $\mathcal M$ such that $\mathcal N'|_{U}=\mathcal N$. 

\end{proof}

\medskip
We are now ready to show that any quasi-coherent sheaf on a quasi-compact and semi-separated scheme is a filtered direct limit of
its finitely generated quasi-coherent submodules.

\medskip
\begin{Thm}\label{Thm3.8} Let $X$ be a quasi-compact and semi-separated scheme over $(\mathcal C,\otimes,1)$ and let 
$\mathcal M$ be a quasi-coherent sheaf on $X$. Then, $\mathcal M$ can be expressed as a filtered direct limit
of its finitely generated quasi-coherent submodules. 
\end{Thm}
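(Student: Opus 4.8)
The plan is to realise $\mathcal{M}$ as the \emph{sum} of all of its finitely generated quasi-coherent submodules and then to upgrade ``sum'' to ``filtered colimit'' using Proposition~\ref{P3.2ct}(b). Write $\{\mathcal{N}_\lambda\}_{\lambda\in\Lambda}$ for the system of finitely generated quasi-coherent submodules of $\mathcal{M}$, ordered by inclusion. By Proposition~\ref{P2.6}(b) the sum $\mathcal{S}:=\sum_{\lambda\in\Lambda}\mathcal{N}_\lambda$ exists as a quasi-coherent submodule of $\mathcal{M}$, and by construction $\mathcal{S}(U)=\sum_{\lambda\in\Lambda}\mathcal{N}_\lambda(U)$ for every $U\in ZarAff(X)$. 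Hence it suffices to prove $\mathcal{S}(U)=\mathcal{M}(U)$ for each $U\in ZarAff(X)$.

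First I would fix $U=Spec(B)\in ZarAff(X)$ and set $M:=\mathcal{M}(U)\in B-Mod$. By assumption (C1), or equivalently by Lemma~\ref{L3.3xptA}(b), $M$ is the directed union of its finitely generated $B$-submodules; in particular $M=\sum_N N$, the sum running over all finitely generated $B$-submodules $N\subseteq M$. Because $U$ is affine, $QCoh(U)=B-Mod$, so each such $N$ is a finitely generated quasi-coherent submodule of $\mathcal{M}|_U$, and Proposition~\ref{P3.6cp} applies: it produces a finitely generated quasi-coherent submodule $\mathcal{N}'$ of $\mathcal{M}$ with $\mathcal{N}'|_U=\mathcal{N}'(U)=N$. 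Thus $\mathcal{N}'$ is one of the $\mathcal{N}_\lambda$ and $N\subseteq\mathcal{S}(U)$. Summing over all such $N$ gives $M=\sum_N N\subseteq\mathcal{S}(U)\subseteq M$, so $\mathcal{S}(U)=M$; as $U$ was arbitrary, $\mathcal{S}=\mathcal{M}$.

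It remains to identify the sum with the filtered colimit. Since $\Lambda$ is filtered by Proposition~\ref{P3.2ct}(b), the canonical cocone $\{\mathcal{N}_\lambda\hookrightarrow\mathcal{S}=\mathcal{M}\}$ exhibits $\mathcal{M}$ as $\varinjlim_{\lambda}\mathcal{N}_\lambda$: affine-locally on $U=Spec(B)$ it becomes $\{\mathcal{N}_\lambda(U)\hookrightarrow M\}$, which exhibits $M$ as the filtered colimit of the submodules $\mathcal{N}_\lambda(U)$ in the abelian category $B-Mod$ (the filtered colimit of a filtered system of submodules being their sum), and a cocone in $QCoh(X)$ that is colimiting over each affine is colimiting, because filtered colimits commute with the finite limits and flat base changes that compute the sections of a quasi-coherent sheaf over a finite affine cover. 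Therefore $\mathcal{M}\cong\varinjlim_{\lambda}\mathcal{N}_\lambda$, a filtered direct limit of finitely generated quasi-coherent submodules.

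The substantive work has already been absorbed into Propositions~\ref{P3.3} and~\ref{P3.6cp} — extending a finitely generated quasi-coherent submodule from an affine open to all of $X$ — together with Lemma~\ref{L3.03}, so the argument above is mostly bookkeeping. The one place that needs care is the reduction to a single affine $U$: one must invoke $U\in ZarAff(X)$ so that Proposition~\ref{P3.6cp} is applicable verbatim, and use that over an affine $Spec(B)$ a finitely generated $B$-submodule of $\mathcal{M}(U)$ is literally the same datum as a finitely generated quasi-coherent submodule of $\mathcal{M}|_U$.
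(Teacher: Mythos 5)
Your proposal is correct and follows essentially the same route as the paper: both arguments rest on Proposition \ref{P3.6cp} to extend finitely generated submodules of $\mathcal M|_U$ to finitely generated quasi-coherent submodules of $\mathcal M$, and on Proposition \ref{P3.2ct}(b) for filteredness of the indexing system. The only difference is presentational — you verify affine-locally that the sum of all finitely generated quasi-coherent submodules is all of $\mathcal M(U)$, whereas the paper exhibits the canonical map from the filtered colimit as a monomorphism that is also an epimorphism in the abelian category $QCoh(X)$; these amount to the same computation.
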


\begin{proof} We consider the  system $\{\mathcal M_\lambda\}_{\lambda\in\Lambda_M}$ of finitely generated quasi-coherent submodules of $\mathcal M$. From Proposition \ref{P3.2ct}(b), we know
that $\Lambda_M$ is filtered. We have a natural morphism:
\begin{equation}
f:\mathcal M':=\underset{\lambda\in \Lambda_M}{\varinjlim}\textrm{ }\mathcal M_{\lambda}
\longrightarrow \mathcal M\qquad \mathcal M'(U):=\underset{\lambda\in \Lambda_M}{\varinjlim}\textrm{ }\mathcal M_{\lambda}(U)\qquad\forall \textrm{ }U\in ZarAff(X)
\end{equation} Since each $\mathcal M_\lambda(U)\longrightarrow \mathcal M(U)$ is a monomorphism,   $f(U):\mathcal M'(U)\longrightarrow 
\mathcal M(U)$ is also a monomorphism (because filtered colimits commute with finite limits in $\mathcal O_X(U)-Mod$). Hence, $f:\mathcal M'\longrightarrow 
\mathcal M$ is a monomorphism in $QCoh(X)$. 

\medskip We now consider the restriction $\mathcal M|_U$ of $\mathcal M$ to some $U\in ZarAff(X)$.  Since $U$ is affine, we can express $\mathcal M|_U$ as a filtered
colimit of its finitely generated submodules $\mathcal M'_\lambda$, $\lambda\in \Lambda_U$. From Proposition \ref{P3.6cp}, it follows that
we can choose finitely generated quasi-coherent submodules $\mathcal M''_{\lambda}$, $\lambda\in \Lambda_U$  of $\mathcal M$ such that
$\mathcal M''_\lambda |_U=\mathcal M'_\lambda$.  We let $\Lambda'_M$ be the filtered subsystem of $\Lambda_M$ consisting
of all finite sums of the finitely generated submodules $\{\mathcal M''_{\lambda}\}_{\lambda\in \Lambda_U, U\in ZarAff(X)}$. Then, it is clear
that the natural morphism:
\begin{equation}
g:\mathcal M'':=\underset{\lambda\in \Lambda_M'}{\varinjlim}\textrm{ }\mathcal M''_{\lambda}
\longrightarrow \mathcal M
\end{equation} is an isomorphism. Further, the isomorphism $g:\mathcal M''\overset{\cong}{\longrightarrow} \mathcal M$ factors through $f:
\mathcal M'\longrightarrow \mathcal M$ from which it follows that $f$ is also an epimorphism in $QCoh(X)$. Since
$QCoh(X)$ is an abelian category (see \cite[Proposition 2.9]{AB2}), it follows that $f$ is an isomorphism. 

\end{proof} 

\medskip

\medskip
\section{Noetherian commutative monoids and field objects over $(\mathcal C,\otimes,1)$} 

\medskip

\medskip

In this section, we will  introduce and study Noetherian schemes and field objects over $(\mathcal C,\otimes,1)$. We will define
Noetherian commutative monoids to be those for which every finitely generated module is also ``finitely presented'' (see Definition 
\ref{Def4.1}). Our notion of a ``field object'' in $(\mathcal C,\otimes,1)$ is presented in Definition \ref{D4.2}. We will see that with the notion
of field object as in Definition \ref{D4.2},  we can recover several of the usual properties of a field.  Thereafter, in Section 5,
we will show how the points of a Noetherian, quasi-compact and semi-separated scheme $X$ over a field object $K$ 
can be recovered from certain kinds of cocontinuous symmetric monoidal functors between categories of quasi-coherent sheaves.

\medskip
\begin{defn}\label{Def4.1} Let $A\in Comm(\mathcal C)$ be a commutative monoid object of $(\mathcal C,\otimes,1)$. For any
$n\geq 1$, we let $A^n$ denote the direct sum of $n$-copies of $A$. 
 We will say that $A$ is Noetherian
if for any finitely generated $A$-module $M$, there exists a morphism $q:A^m\longrightarrow A^n$ for some $m$, $n\geq 1$ such that $M$  can be  expressed as a colimit: 
\begin{equation}\label{4.1}
colim(0\longleftarrow A^m \overset{q}{\longrightarrow}A^n)\overset{\cong}{\longrightarrow}M
\end{equation} 

\medskip
We will say that a scheme $X$ over $(\mathcal C,\otimes,1)$ is Noetherian if for any affine $U=Spec(A)\in ZarAff(X)$, $A$ is a Noetherian commutative monoid object 
in the above sense. 
\end{defn}

\medskip 
We note here that in previous work in \cite{ABp}, we have explored
other notions of ``Noetherian'' for monoids objects and schemes over
symmetric monoidal categories. 
Before we proceed further, we must show that if $A\in Comm(\mathcal C)$ is Noetherian, the corresponding affine scheme 
$Spec(A)$ is a Noetherian scheme in the sense of Definition \ref{Def4.1}. In order to prove this, we mention here the following Lemma that
is well known in the case of ordinary commutative rings (see, for example, \cite[Chapitre IV]{Lazard}). 

\medskip
\begin{lem}\label{Read1} Let $f:A\longrightarrow B$ be an epimorphism of commutative monoid objects in $(\mathcal C,\otimes,1)$. 
Then, we have an isomorphism $B\cong B\otimes_AB$. 
\end{lem}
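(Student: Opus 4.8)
The plan is to exploit the universal property of an epimorphism in $Comm(\mathcal C)$ together with the universal property of the pushout $B\otimes_AB$ in the category of commutative monoid objects. First I would recall that for any morphism $f:A\longrightarrow B$ in $Comm(\mathcal C)$, the object $B\otimes_AB$ together with the two coprojections $i_1,i_2:B\longrightarrow B\otimes_AB$ (sending $b\mapsto b\otimes 1$ and $b\mapsto 1\otimes b$ respectively) is the pushout of $f$ along itself in $Comm(\mathcal C)$; this is standard since $\otimes_A$ is the coproduct in $A-Mod$ adapted to the monoidal structure, and the relative tensor product of commutative monoids is their coproduct over $A$. In particular $i_1\circ f=i_2\circ f$.

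Next, the key observation is that when $f$ is an epimorphism, the two coprojections $i_1,i_2:B\longrightarrow B\otimes_AB$ must coincide. Indeed, $i_1\circ f=i_2\circ f$ and $f$ being an epimorphism forces $i_1=i_2$. Write $j:=i_1=i_2:B\longrightarrow B\otimes_AB$ for this common map. I then claim $j$ is an isomorphism, with inverse the multiplication map $\mu:B\otimes_AB\longrightarrow B$, which exists because $B$ is a commutative $A$-algebra (it is the structure map exhibiting $B\in Comm(\mathcal C)$ over $A$, or equivalently the codiagonal of the pushout). One composite is immediate: $\mu\circ j=\mu\circ i_1=\mathrm{id}_B$ since $\mu(b\otimes 1)=b$, i.e. $\mu$ is a retraction of each coprojection by unitality of the monoid $B$. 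For the other composite, $j\circ\mu:B\otimes_AB\longrightarrow B\otimes_AB$, I would use the universal property of the pushout $B\otimes_AB$: a morphism out of $B\otimes_AB$ in $Comm(\mathcal C)$ is determined by its two restrictions along $i_1$ and $i_2$. The restriction of $j\circ\mu$ along $i_1$ is $j\circ\mu\circ i_1=j\circ\mathrm{id}_B=j=i_1$, and similarly along $i_2$ it is $j=i_2$. Since the identity $\mathrm{id}_{B\otimes_AB}$ has exactly the same two restrictions $i_1$ and $i_2$, the universal property gives $j\circ\mu=\mathrm{id}_{B\otimes_AB}$. Hence $j$ and $\mu$ are mutually inverse, so $B\cong B\otimes_AB$.

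The main obstacle I anticipate is making precise, in the abstract symmetric monoidal setting, that $B\otimes_AB$ really is the pushout of $A\longrightarrow B$ along itself in $Comm(\mathcal C)$ and that morphisms of $A$-algebras out of it are determined by the two coprojections — in the classical case this is the fact that $\mathrm{Hom}_{A\text{-alg}}(B\otimes_AB,C)\cong\mathrm{Hom}_{A\text{-alg}}(B,C)\times\mathrm{Hom}_{A\text{-alg}}(B,C)$, and here one needs the analogous statement for commutative monoid objects in $\mathcal C$. This is, however, a formal consequence of the closed symmetric monoidal structure and the construction of $A-Mod$ recalled in Section 2 (coequalizers and tensor products behaving as expected), so once that bookkeeping is in place the argument is purely formal and short. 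I would state the pushout property as the single technical ingredient and then give the three-line diagram chase above.
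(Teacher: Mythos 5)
Your proof is correct and rests on the same two ingredients as the paper's: that $B\otimes_AB$ is the pushout of $f$ along itself in $Comm(\mathcal C)$, and that epimorphy of $f$ collapses the two legs of any cocone. The paper concludes slightly more abstractly (observing that $B$ itself, with identity legs, already satisfies the universal property of that pushout, so uniqueness of pushouts gives the isomorphism), whereas you exhibit the mutually inverse maps $j$ and $\mu$ explicitly; this is only a presentational difference.
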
 

\begin{proof} Let $f_1$, $f_2:B\longrightarrow C$ be morphisms in $Comm(\mathcal C)$ satisfying $f_1\circ f=f_2\circ f$. Then, $f$ being
an epimorphism in $Comm(\mathcal C)$, we must have $f_1=f_2$. In other words, $B$ is the pushout in $Comm(\mathcal C)$ of the diagram
$B\overset{f}{\longleftarrow} A \overset{f}{\longrightarrow}B$. On the other hand, we know that the pushout of 
$B\overset{f}{\longleftarrow} A \overset{f}{\longrightarrow}B$ is $B\otimes_AB$. It follows that $B\cong B\otimes_AB$. 

\end{proof}

\medskip
\begin{thm}\label{P4.2xr} Let $f:A\longrightarrow B$ be an epimorphism of commutative monoids in $(\mathcal C,\otimes,1)$ such that
$B$ is a flat $A$-module.  Then, 
if $A\in Comm(\mathcal C)$ is Noetherian, so is $B$. In particular, if $A\in Comm(\mathcal C)$ is a Noetherian commutative monoid object,
$Spec(A)$ is a Noetherian scheme. 
\end{thm}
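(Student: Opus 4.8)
The plan is to show that an arbitrary finitely generated $B$-module $N$ is finitely presented in the sense of \eqref{4.1}, and then to deduce the statement about schemes from the fact that a Zariski open immersion $Spec(B) \longrightarrow Spec(A)$ is given by a flat epimorphism $A \longrightarrow B$ in $Comm(\mathcal C)$. First I would observe that the restriction-of-scalars functor $B\textrm{-}Mod \longrightarrow A\textrm{-}Mod$ along the epimorphism $f$ is \emph{fully faithful}: this is the module-theoretic counterpart of Lemma \ref{Read1}, because for $B$-modules $M$, $N$ one has $Hom_{A}(M,N) \cong Hom_{B}(B\otimes_A M, N)$ and the natural map $B \otimes_A M \longrightarrow M$ (induced by the $B$-action) is an isomorphism whenever $M$ is already a $B$-module — this uses $B \cong B \otimes_A B$ from Lemma \ref{Read1} together with flatness. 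Consequently the essential image of $B\textrm{-}Mod$ in $A\textrm{-}Mod$ is closed under the relevant limits and colimits, and — crucially — the extension-of-scalars functor $- \otimes_A B$ applied to an $A$-module that happens to carry a compatible $B$-structure returns that same module.

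The second step is to realize $N$ as an extension of scalars. Since $B$ is flat over $A$ and $f$ is an epimorphism, choose (viewing $N$ as an $A$-module via $f$) a finitely generated $A$-submodule $N_0 \subseteq N$ whose image generates $N$ as a $B$-module; more carefully, by condition (C1) write $N \cong \varinjlim_\lambda N_\lambda$ as a directed colimit of finitely generated $A$-submodules (Lemma \ref{L3.3xptA}(b)), apply $- \otimes_A B$ to get $N \cong N \otimes_A B \cong \varinjlim_\lambda (N_\lambda \otimes_A B)$ (here I use that $N$, being a $B$-module, satisfies $N \otimes_A B \cong N$, by the first step), and then use that $N$ is finitely generated over $B$ to conclude from the filtered-colimit argument of Proposition \ref{P3.2ct}(a) that $N$ is already the image of some $N_\lambda \otimes_A B$. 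Thus $N \cong (N_\lambda \otimes_A B)/K$ for a $B$-submodule $K$; writing $N_\lambda$ as a cokernel $colim(0 \longleftarrow A^m \overset{q}{\longrightarrow} A^n)$ using Noetherianity of $A$, and tensoring with the flat module $B$, gives $N_\lambda \otimes_A B \cong colim(0 \longleftarrow B^m \overset{q\otimes B}{\longrightarrow} B^n)$, so $N_\lambda \otimes_A B$ is finitely presented over $B$. It then remains to argue that the further quotient $N = (N_\lambda \otimes_A B)/K$ is again finitely presented: $K$ is the kernel of a surjection from a finitely generated $B$-module onto $N$, and the kernel of the composite $B^n \twoheadrightarrow N_\lambda \otimes_A B \twoheadrightarrow N$ is finitely generated over $B$ because $B$ is Noetherian-after-$A$ — wait, that is circular; instead I would present $N$ directly as $colim(0 \longleftarrow B^{m'} \longrightarrow B^n)$ by choosing generators for the kernel of $B^n \twoheadrightarrow N$, which is finitely generated since it is a quotient-compatible extension of $K$ by the image of $B^m$, both finitely generated, hence finitely generated by Proposition \ref{P3.2ct}(a) applied in $B\textrm{-}Mod$.

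For the scheme statement, let $A \longrightarrow B$ induce a Zariski open immersion $Spec(B) \longrightarrow Spec(A)$. By To\"en–Vaqui\'e \cite[D\'efinition 2.12]{TV}, such a morphism is a flat monomorphism in $Aff_{\mathcal C}$, i.e. $A \longrightarrow B$ is an epimorphism in $Comm(\mathcal C)$ with $B$ flat over $A$; so the first part applies to show $B$ is Noetherian. Since every $U = Spec(B) \in ZarAff(Spec(A))$ arises this way, $Spec(A)$ is Noetherian by Definition \ref{Def4.1}. The main obstacle I anticipate is the bookkeeping in the second step: making precise that a finitely generated $B$-module is a filtered colimit of extensions of scalars of finitely generated $A$-modules, and then that finite presentation descends through the final quotient — this needs the fully-faithfulness of restriction of scalars (so that $B$-submodules and $A$-submodules of a $B$-module coincide, letting us transport (C1) and the finite-presentation colimit \eqref{4.1} between the two module categories) combined carefully with flatness of $B$ to commute $- \otimes_A B$ past the finite limits defining these presentations.
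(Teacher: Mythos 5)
Your overall strategy coincides with the paper's: restrict $N$ to an $A$-module, write it as a filtered colimit of finitely generated $A$-submodules $N_\lambda$ via (C1), use $B\otimes_AB\cong B$ (Lemma \ref{Read1}) to identify $N\otimes_AB\cong N$, and then exploit finite generation of $N$ over $B$ to land on a single $N_{\lambda_0}\otimes_AB$. But there is a genuine gap at the landing step. You conclude only that $N$ is the \emph{image} of some $N_{\lambda_0}\otimes_AB$, i.e.\ $N\cong (N_{\lambda_0}\otimes_AB)/K$, and you then need $K$ (equivalently, the kernel of $B^n\twoheadrightarrow N$) to be finitely generated. You notice yourself that invoking Noetherianity of $B$ here is circular, and the replacement you offer --- that the kernel of $B^n\twoheadrightarrow N$ is ``a quotient-compatible extension of $K$ by the image of $B^m$, both finitely generated'' --- assumes exactly the unproven claim that $K$ is finitely generated. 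Nothing in the hypotheses gives you that directly; finite generation of such kernels is precisely a Noetherian-type property of $B$ and cannot be taken for granted.

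The gap is avoidable, and the paper's proof shows how: since $B$ is flat over $A$ and the $N_\lambda$ form a filtered system of monomorphisms, the induced maps $N_\lambda\otimes_AB\longrightarrow N\otimes_AB\cong N$ are themselves monomorphisms. Hence $\{N_\lambda\otimes_AB\}$ is a filtered system of monomorphisms with colimit $N$, and the definition of finite generation applied to the identity of $N$ produces a factorization of $id_N$ through some $N_{\lambda_0}\otimes_AB\longrightarrow N$; that map is then simultaneously a (split) epimorphism and a monomorphism, hence an isomorphism. So $N\cong N_{\lambda_0}\otimes_AB$ on the nose, there is no quotient $K$ to deal with, and tensoring the presentation $colim(0\longleftarrow A^m\longrightarrow A^n)\cong N_{\lambda_0}$ with the flat module $B$ finishes the argument. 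Your treatment of the scheme statement, and of the identification $N\otimes_AB\cong N$ via full faithfulness of restriction of scalars (equivalently $N\otimes_B(B\otimes_AB)\cong N\otimes_AB$), agrees with the paper and is fine.
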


\begin{proof} Let $A$ be Noetherian and let $N$ be a finitely generated $B$-module. Then, $N$ is an $A$-module by ``restriction of scalars''. 
From condition (C1) in Section 3, we know that as an $A$-module, $N$ may be expressed
as the filtered colimit of its finitely generated $A$-submodules $\{N_i\}_{i\in I}$. It follows that:
\begin{equation}\label{4.2dc9}
N\otimes_AB\cong \underset{i\in I}{\varinjlim}\textrm{ }N_i\otimes_AB
\end{equation} On the other hand, since $f:A\longrightarrow B$ is an epimorphism in $Comm(\mathcal C)$, 
we know from Lemma \ref{Read1} that $B\otimes_AB\cong B$. Combining this with \eqref{4.2dc9}, 
we have:
\begin{equation}\label{4.3dc9}
N\cong N\otimes_BB\cong N\otimes_B(B\otimes_AB)\cong N\otimes_AB\cong \underset{i\in I}{\varinjlim}\textrm{ }N_i\otimes_AB
\end{equation} Since $B$ is a flat $A$-module, $\{N_i\otimes_AB\}_{i\in I}$ is a filtered system of monomorphisms in
$B-Mod$. Then, $N$ being a finitely generated $B$-module, there exists $i_0\in I$ such that
$N\cong N_{i_0}\otimes_AB$. 
 
 \medskip
 Finally, since $A$ is Noetherian and $N_{i_0}$ is finitely generated as an $A$-module, there exists a 
 morphism $q:A^m\longrightarrow A^n$ for some $m,n\geq 1$ such that $N_{i_0}$ can be expressed as a colimit: 
\begin{equation}\label{4.4b747}
colim(0\longleftarrow A^m \overset{q}{\longrightarrow}A^n)\overset{\cong}{\longrightarrow}N_{i_0}
\end{equation} From \eqref{4.4b747}, it follows that:
\begin{equation}
N\cong N_{i_0}\otimes_AB\cong colim(0\longleftarrow A^m \overset{q}{\longrightarrow}A^n)\otimes_AB\cong colim\left(
\begin{CD}0 @<<< B^m @>q\otimes_AB>>B^n\end{CD}\right)
\end{equation} Hence, $B$ is Noetherian. 

\medskip
In particular, if $f:A\longrightarrow B$ induces a Zariski open immersion of affine schemes, we know that
$f:A\longrightarrow B$ must be an epimorphism in $Comm(\mathcal C)$ (see \cite[D\'{e}finition 2.9]{TV}) and $B$ must be a flat 
$A$-module. Hence, $Spec(A)$ is Noetherian. 

\end{proof}

\medskip
\begin{defn}\label{D4.2} Let $K\in Comm(\mathcal C)$ be a commutative monoid object of $(\mathcal C,\otimes,1)$ such that $K\ne 0$. We will say that $K$ is a field object of $(\mathcal C,\otimes,1)$ if 
$K$ is Noetherian and  any monomorphism $I\longrightarrow K$ in $K-Mod$ is either an isomorphism or zero. 
\end{defn}

\medskip
Since $K$ is a commutative monoid object, it follows from the well known ``Eckmann-Hilton argument'' (see \cite{EH}) that $\mathcal E(K):=Hom_{K-Mod}(K,K)$
is a commutative ring. In particular, since the field object $K$ has no subobjects in $K-Mod$ (other than $0$ and $K$), 
every non-zero morphism in $Hom_{K-Mod}(K,K)$ is an isomorphism. It follows that for a field object $K$ in $(\mathcal C,\otimes,1)$, $\mathcal E(K)
$ is a field. Further, for any $K$-module $M$, $Hom_{K-Mod}(K,M)$ becomes a vector space
over $\mathcal E(K)$. In the rest of this section, we will further study field objects in $(\mathcal C,\otimes,1)$.

\medskip
\begin{lem}\label{Lem5.4lg} Let $K$ be a field object in $(\mathcal C,\otimes,1)$ and let $f:L\longrightarrow K$ be an epimorphism
in $K-Mod$. Then, the induced map $Hom_{K-Mod}(K,f):Hom_{K-Mod}(K,L)\longrightarrow Hom_{K-Mod}(K,K)$ is a surjection. 
\end{lem}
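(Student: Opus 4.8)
The plan is to use the defining property of the field object $K$ that it has no nontrivial subobjects in $K-Mod$, applied to the image of a suitably chosen element. Let $f:L\longrightarrow K$ be an epimorphism in $K-Mod$ and let $\phi:K\longrightarrow K$ be an arbitrary morphism; I want to lift $\phi$ through $f$. First I would form the pullback (fiber product) $P:=L\times_K K$ of $f$ along $\phi$ in the abelian category $K-Mod$. The projection $P\longrightarrow K$ (second coordinate) is the pullback of the epimorphism $f$, hence is itself an epimorphism since $K-Mod$ is abelian. So it suffices to show that any epimorphism $g:P\longrightarrow K$ in $K-Mod$ admits a section in $\mathrm{Hom}_{K-Mod}(K,-)$, i.e. that $\mathrm{Hom}_{K-Mod}(K,g)$ is surjective — because if $s:K\longrightarrow P$ satisfies $gs=\mathrm{id}_K$, then composing $s$ with the first projection $P\longrightarrow L$ gives a morphism $K\longrightarrow L$ whose composite with $f$ is $\phi$, by the pullback property.

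So the reduction is: it is enough to prove that for every epimorphism $g:P\twoheadrightarrow K$ in $K-Mod$, the identity $\mathrm{id}_K$ is in the image of $\mathrm{Hom}_{K-Mod}(K,g)$. Here I would argue as follows. Consider the $\mathcal E(K)$-vector space $V:=\mathrm{Hom}_{K-Mod}(K,P)$ and the $\mathcal E(K)$-linear map $\mathrm{Hom}_{K-Mod}(K,g):V\longrightarrow \mathrm{Hom}_{K-Mod}(K,K)=\mathcal E(K)$. I must produce an element of $V$ mapping to a nonzero scalar; scaling by its inverse in the field $\mathcal E(K)$ then yields a preimage of $\mathrm{id}_K$. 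To find such an element, pick any nonzero morphism; the natural candidate is to consider the image subobject of $g$ composed with test maps, or better, to use the structure of $P$ as an extension. Since $g$ is an epimorphism, write $K':=\ker(g)\hookrightarrow P$, so we have a short exact sequence $0\to K'\to P\xrightarrow{g} K\to 0$. If this sequence splits, we are immediately done. The obstruction to splitting lies in $\mathrm{Ext}^1_{K-Mod}(K,K')$, so the heart of the matter is to show this Ext group vanishes — equivalently, that $K$ is a projective object of $K-Mod$.

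Here is where I expect the main obstacle, and I would instead route around it using only what Definition \ref{D4.2} literally gives. Rather than proving projectivity of $K$ abstractly (which is essentially part (a) of the second displayed Theorem, proved later), I would: take any nonzero $\psi\in\mathrm{Hom}_{K-Mod}(K,K)$ — e.g. the identity — and consider whether it factors through $g$. Pull back the epimorphism $g$ along $\psi=\mathrm{id}$ (trivial) — that does not help. The clean route: consider the subobject $\mathrm{Im}(g\circ -)$ is all of $\mathcal E(K)$ or $0$; since $g$ is epi and $K\ne 0$, and since every nonzero endomorphism of $K$ is an isomorphism, it suffices to exhibit one nonzero morphism $K\to P$ composing nontrivially with $g$. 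Equivalently, I would show $\mathrm{Hom}_{K-Mod}(K,g)\ne 0$. Suppose for contradiction it is zero: then every $K\to P$ lands in $\ker g=K'$, i.e. $\mathrm{Hom}_{K-Mod}(K,K')=\mathrm{Hom}_{K-Mod}(K,P)$. I would then derive a contradiction with $g$ being an epimorphism by testing against $K$: since $K$ is a monoid, the unit gives a distinguished map, and an epimorphism out of $P$ that kills every $K$-point forces $P$ to factor appropriately — concretely, applying $g\otimes_K(-)$ or using that $g$ surjective means $K$ is a quotient, so there is some $K\to P$ hitting it once we know $K-Mod$ is generated by $K$ (which holds since $K$ is the unit of $\otimes_K$). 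This last point — that $\mathrm{Hom}_{K-Mod}(K,-)$ is faithful enough on epimorphisms, i.e. that a nonzero quotient of $P$ by $\ker g$ must receive a $K$-point — is the step I would need to nail down carefully, most likely by noting $\mathrm{Hom}_{K-Mod}(K,M)\cong M$ as sets/objects via the unit isomorphism $K\otimes_K M\cong M$, so that $\mathrm{Hom}_{K-Mod}(K,g)$ is essentially $g$ itself on underlying objects and is therefore surjective because $g$ is.
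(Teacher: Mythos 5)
Your reductions at the start are fine (the pullback trick correctly reduces surjectivity of $Hom_{K-Mod}(K,f)$ to showing that $\mathrm{id}_K$ lifts through any epimorphism onto $K$, and you correctly recognize that invoking projectivity of $K$ would be circular, since that is Proposition \ref{P4.6vad}(a), which is proved \emph{from} this lemma). But the step you yourself flag as the one to ``nail down carefully'' is exactly where the argument breaks, and the justification you offer for it is false. You claim that $Hom_{K-Mod}(K,M)\cong M$ via the unit isomorphism $K\otimes_KM\cong M$, so that $Hom_{K-Mod}(K,g)$ ``is essentially $g$ itself'' and hence surjective. This conflates the \emph{internal} hom $\underline{Hom}_K(K,M)\cong M$ with the \emph{external} hom-set $Hom_{K-Mod}(K,M)\cong Hom_{\mathcal C}(1,M)$. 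In a general symmetric monoidal abelian category the unit object is not projective and $Hom_{\mathcal C}(1,-)$ does not preserve epimorphisms (think of $\mathcal O_Y$-modules on a space $Y$, where $Hom(\mathcal O_Y,\mathcal F)=\Gamma(Y,\mathcal F)$ and surjections of sheaves need not be surjective on global sections). So ``$g$ epi $\Rightarrow Hom_{K-Mod}(K,g)$ surjective'' is precisely the statement of the lemma, not something you may read off from the unit isomorphism.

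The paper's proof routes around this with a contrapositive Yoneda argument that genuinely uses the Noetherian hypothesis on $K$ and condition (C1): if $f_K:=Hom_{K-Mod}(K,f)$ is not surjective, then since its image is an $\mathcal E(K)$-subspace of the one-dimensional $\mathcal E(K)$-vector space $\mathcal E(K)$, it must be $0$; hence $f_{K^n}=0$ for all $n$; hence, writing any finitely generated $M$ as $colim(0\longleftarrow K^m\longrightarrow K^n)$ and using the corresponding limit description of $Hom(M,-)$, one gets $f_M=0$ for all finitely generated $M$; hence, by (C1), $f_M=0$ for \emph{all} $M$; and then the Yoneda lemma forces $f=0$, contradicting that $f$ is an epimorphism onto $K\ne 0$. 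Your proposal never uses the Noetherian hypothesis or local finite generation, which is a strong hint that something essential is missing: without them there is no way to propagate the vanishing of $f_K$ to the vanishing of $f$ itself, and no way to conclude from ``$g$ is epi'' that $K$ admits a point hitting it.
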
 

\begin{proof} For any $M\in K-Mod$, let $f_M:Hom_{K-Mod}(M,L)\longrightarrow Hom_{K-Mod}(M,K)$ denote the morphism induced by $f$. 
Now, since $\mathcal E(K)=Hom_{K-Mod}(K,K)$ is a field, if  we assume that $f_K:Hom_{K-Mod}(K,L)\longrightarrow Hom_{K-Mod}(K,K)$ 
is not a surjection of vector spaces, it must be $0$. Then, for any $n\geq 1$, the morphism
$f_{K^n}:Hom_{K-Mod}(K^n,L)\longrightarrow Hom_{K-Mod}(K^n,K)$ induced by $f$ is $0$. Further, since $K$ is Noetherian, any finitely
generated $K$-module $M$ can be expressed as a colimit of the form:
$
M\cong colim(0\longleftarrow K^m \overset{q}{\longrightarrow}K^n)
$. We now have the following commutative diagram:
\begin{equation}
\begin{CD}
lim(0\longrightarrow Hom_{K-Mod}(K^m,L)\longleftarrow Hom_{K-Mod}(K^n,L)) @>\cong>> Hom_{K-Mod}(M,L)\\
@V0VV @Vf_MVV \\
lim(0\longrightarrow Hom_{K-Mod}(K^m,K)\longleftarrow Hom_{K-Mod}(K^n,K)) @>\cong>> Hom_{K-Mod}(M,K)\\
\end{CD}
\end{equation} We see that $f_M=0:Hom_{K-Mod}(M,L)\longrightarrow Hom_{K-Mod}(M,K)$ for any finitely generated
$K$-module $M$. Finally, since every $K$-module can be expressed as a filtered colimit of its finitely generated submodules,
it follows that $f_M=0$ for each $M\in K-Mod$.  From Yoneda Lemma, it now follows that $f=0:L\longrightarrow K$. Since
$K\ne 0$, this contradicts the fact that $f$ is an epimorphism. 
\end{proof}

\medskip
\begin{thm} \label{P4.6vad} Let $K$ be a field object in $(\mathcal C,\otimes,1)$. Then:

\medskip
(a) For any $n\geq 1$, $K^n$ is a projective object of $K-Mod$.

\medskip
(b) Let $f:M\longrightarrow N$ be a morphism of finitely generated $K$-modules. Choose any morphisms $q_M:K^{m'}\longrightarrow K^m$
and $q_N:K^{n'}\longrightarrow K^n$ such that $M$ and $N$ may be expressed as 
\begin{equation}
M\cong colim(0\longleftarrow K^{m'} \overset{q_M}{\longrightarrow}K^m)
\qquad N\cong colim(0\longleftarrow K^{n'} \overset{q_N}{\longrightarrow}K^n)
\end{equation}
 respectively. Then, we have morphisms $g:K^m\longrightarrow K^n$ and
 $h:K^{m'}\longrightarrow K^{n'}$ such that the following diagram is commutative:
 \begin{equation}\label{4.8bur}
 \begin{CD}
 K^{m'} @>q_M>> K^m @>>> M @>>> 0\\
 @VhVV @VgVV @VfVV @. \\
 K^{n'} @>q_N>> K^n @>>> N @>>> 0\\
 \end{CD}
 \end{equation}
\end{thm}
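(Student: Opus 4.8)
For part (a), the plan is to show that $\mathrm{Hom}_{K\text{-}Mod}(K^n,-)$ sends epimorphisms to epimorphisms. Since $K^n$ is a finite direct sum, it suffices to treat $n=1$, because $\mathrm{Hom}_{K\text{-}Mod}(K^n,-)\cong\mathrm{Hom}_{K\text{-}Mod}(K,-)^{\oplus n}$ and a finite product of surjections of abelian groups is a surjection. So let $g:M\twoheadrightarrow N$ be an epimorphism in $K\text{-}Mod$; I want $\mathrm{Hom}_{K\text{-}Mod}(K,g)$ to be surjective. First reduce to the case where $M,N$ are finitely generated: given $\varphi:K\to N$, using condition (C1) write $N\cong\varinjlim N_j$ as a filtered colimit of finitely generated submodules; since $K$ is finitely generated, $\varphi$ factors through some $N_{j_0}$, and pulling back $M\to N$ along $N_{j_0}\hookrightarrow N$ gives a finitely generated submodule $M_{j_0}$ of $M$ (finitely generated because it surjects onto $N_{j_0}$, arguing as in Proposition~\ref{P3.2ct}(a)) mapping onto $N_{j_0}$. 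Then it suffices to lift $\varphi$ along $M_{j_0}\to N_{j_0}$. Now for $M,N$ finitely generated, the key step is Lemma~\ref{Lem5.4lg}: that lemma handles the case $N=K$, and the Noetherian presentations together with the commutative diagram of $\mathrm{Hom}$'s (exactly as in the proof of Lemma~\ref{Lem5.4lg}) propagate surjectivity to $N=K^n$ and then to all finitely generated $N$. Actually the cleanest route is: Lemma~\ref{Lem5.4lg} shows $f_K$ is surjective whenever $f:L\to K$ is epi; feeding this through the presentation diagram for an arbitrary finitely generated $N$ shows $\mathrm{Hom}_{K\text{-}Mod}(K,-)$ preserves epimorphisms onto finitely generated targets, and the colimit argument above extends this to all targets. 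Hence $K$, and therefore $K^n$, is projective.

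For part (b), the strategy is to build the vertical maps by lifting against the projectivity just established. Denote by $K^m\to M$ and $K^n\to N$ the canonical epimorphisms coming from the presentations (these are epimorphisms since they are cokernel maps in the colimit description). The composite $K^m\to M\xrightarrow{f}N$ is a morphism from the projective object $K^m$ to $N$; since $K^n\to N$ is an epimorphism, part (a) lets us lift it to $g:K^m\to K^n$ making the right square of \eqref{4.8bur} commute. Next, $g\circ q_M:K^{m'}\to K^n$ composed with $K^n\to N$ equals $f\circ(K^m\to M)\circ q_M$, and $(K^m\to M)\circ q_M=0$ by the colimit relation defining $M$ (the image of $q_M$ is the kernel of $K^m\to M$); so $g\circ q_M$ maps into $\ker(K^n\to N)=\mathrm{image}(q_N)$. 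Writing $q_N$ as $K^{n'}\twoheadrightarrow\ker(K^n\to N)\hookrightarrow K^n$, and using that $K^{m'}$ is projective, I can lift $g\circ q_M$ through the epimorphism $K^{n'}\to\ker(K^n\to N)$ to obtain $h:K^{m'}\to K^{n'}$ with $q_N\circ h=g\circ q_M$, i.e. the left square commutes.

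The main obstacle is part (a): everything in (b) is a formal diagram chase using projectivity, but (a) requires genuinely pushing Lemma~\ref{Lem5.4lg} from the single target $K$ to arbitrary targets. The delicate points there are (i) verifying that the pullback submodule $M_{j_0}\subseteq M$ is finitely generated, which needs the image-of-a-finitely-generated argument from Proposition~\ref{P3.2ct}(a) applied in the right way, and (ii) checking that the presentation-diagram argument of Lemma~\ref{Lem5.4lg} really does upgrade "surjective on $\mathrm{Hom}(K,-)$" to "surjective on $\mathrm{Hom}(K^n,-)$" and then to "surjective on $\mathrm{Hom}(M,-)$" for finitely presented $M$ — one must be careful that the $\mathrm{Hom}(K,-)$ applied to the finite limit defining the presentation still computes $\mathrm{Hom}(M,-)$, which is where $K$ being finitely generated (hence $\mathrm{Hom}(K,-)$ commuting with the relevant filtered colimits) and the finite-limit form of the presentation are both used. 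Once (a) is secured, (b) follows immediately.
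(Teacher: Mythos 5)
Your part (b) is the standard projective-lifting diagram chase and is correct once (a) is in hand (the paper likewise dispatches it as the usual argument for finitely presented modules). The problem is part (a). You propose to show that $Hom_{K-Mod}(K,-)$ carries epimorphisms to surjections by first reducing to finitely generated targets and then ``propagating'' Lemma \ref{Lem5.4lg} from the target $K$ to targets $K^n$ and then to arbitrary finitely presented $N$ via the presentation diagram. That propagation step does not work: the diagram in the proof of Lemma \ref{Lem5.4lg} computes $Hom_{K-Mod}(M,L)$ as a \emph{limit} over a presentation of the \emph{source} $M$, using only left exactness of $Hom(-,L)$. To run the analogous argument with the \emph{target} $N=colim(0\longleftarrow K^{n'}\longrightarrow K^n)$ you would need $Hom_{K-Mod}(K,N)$ to be the cokernel of $Hom_{K-Mod}(K,K^{n'})\longrightarrow Hom_{K-Mod}(K,K^n)$, i.e.\ right exactness of $Hom_{K-Mod}(K,-)$ --- which is precisely the projectivity of $K$ you are trying to prove, so the argument is circular. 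Even the intermediate case $N=K^n$ fails formally: Lemma \ref{Lem5.4lg} gives surjectivity of $Hom_{K-Mod}(K,M)\longrightarrow Hom_{K-Mod}(K,K)$ onto each factor separately, and surjectivity onto each factor of a finite product does not imply surjectivity onto the product. (Your opening reduction also quietly assumes that $K$ is a finitely generated object of $K-Mod$; this does in fact follow from (C1) together with the field axiom, since some nonzero finitely generated subobject of $K$ must be all of $K$, but it needs saying --- the paper treats ``$A$ is a finitely generated $A$-module'' as a genuine extra hypothesis in Proposition \ref{Final4}.)

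The paper's proof sidesteps all of this with one move you are missing: given $f:K\longrightarrow T$ and an epimorphism $p:S\longrightarrow T$, replace $T$ by $T'=Im(f)$. Since $Im(f)\cong K/Ker(f)$ and $Ker(f)$ is a subobject of $K$, the field axiom forces $T'=0$ (trivial case) or $T'\cong K$; pulling $p$ back along $T'\hookrightarrow T$ yields an epimorphism $S'\longrightarrow T'\cong K$, to which Lemma \ref{Lem5.4lg} applies directly and produces the required lift of $f$ through $S'\hookrightarrow S$. (Equivalently, one may pull back along $f$ itself.) No reduction to finitely generated targets and no propagation through presentations is needed. If you insert this image/pullback step, the rest of your write-up, including all of (b), goes through.
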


\begin{proof} (a) It suffices to show that $K$ is projective. Let $p:S\longrightarrow T$ be an epimorphism in $K-Mod$ and take any
$f\in Hom_{K-Mod}(K,T)$. Set $T':=Im(f)$. We now define a pullback square:
\begin{equation}\label{4.9but}
\begin{CD}
S'@>p'>> T'=Im(f) \\
@VVV @VVV \\
S @>p>> T \\
\end{CD}
\end{equation} Since $K-Mod$ is an abelian category and $p:S\longrightarrow T$ is an epimorphism, its 
pullback $p':S'\longrightarrow T'$ is also an epimorphism (see, for example, \cite[$\S$ 2.54]{PF}). Now, if $T'=0$, the morphism $f=0:K\longrightarrow T$ lifts to $S$. Otherwise, since $K$ has no   subobjects other
than $0$ and $K$, we must have $T'\cong K$. It now follows from Lemma \ref{Lem5.4lg} that the morphism $Hom_{K-Mod}(K,S')
\longrightarrow Hom_{K-Mod}(K,T')\cong Hom_{K-Mod}(K,K)$ induced by $p':S'\longrightarrow T'\cong K$ must be an epimorphism. Therefore,
the morphism $f:K\longrightarrow T'=Im(f)$ can be lifted to a morphism from $K$ to $S'$. Composing with the inclusion $S'\hookrightarrow
S$, we see that the morphism $f:K\longrightarrow Im(f)=T'\hookrightarrow T$ can be lifted to a morphism from $K$ to $S$. Hence $K$
is projective. 

\medskip
(b) The horizontal rows of the diagram \eqref{4.8bur} are exact  in $K-Mod$. Since $K$ is projective, the proof of part (b)
follows exactly as in the usual case of finitely presented modules over a commutative ring. 

\end{proof}

\medskip
\begin{lem}\label{Lem4.4qg} Every monomorphism (resp. epimorphism) in $K-Mod$ is a split monomorphism (resp. a split epimorphism). 
\end{lem}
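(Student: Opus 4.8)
The plan is to reduce everything to the statement, already essentially available from Lemma \ref{Lem5.4lg} and Proposition \ref{P4.6vad}(a), that $K$ itself ``absorbs'' epimorphisms. First I would handle epimorphisms. Let $p:S\longrightarrow T$ be an epimorphism in $K-Mod$; I want a section $s:T\longrightarrow S$ with $p\circ s=1_T$. Since $K^n$ is projective for every $n\geq 1$ by Proposition \ref{P4.6vad}(a), and since by condition (C1) together with Noetherianness every $K$-module is a filtered colimit of finitely generated submodules, each of which is a cokernel of a map $K^m\longrightarrow K^n$, I would first argue that every $K$-module is projective: a finitely generated module $M\cong \mathrm{coker}(q:K^m\longrightarrow K^n)$ is a quotient of the projective $K^n$, and by Proposition \ref{P4.6vad}(b) the identity of $M$ lifts along $K^n\twoheadrightarrow M$ through the lifting of presentations, so $M$ is a retract of $K^n$, hence projective; then a filtered colimit of projectives need not be projective in general, so instead I would observe that $T$ itself, being a filtered colimit of its finitely generated submodules each of which is projective, is a \emph{direct summand} argument is not immediate — so the cleaner route is: apply projectivity of $K$ pointwise exactly as in Lemma \ref{Lem5.4lg}'s proof. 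Concretely, for the epimorphism $p:S\longrightarrow T$, I would show the induced map $\mathrm{Hom}_{K-Mod}(M,S)\longrightarrow \mathrm{Hom}_{K-Mod}(M,T)$ is surjective for every $M$: for $M=K^n$ this follows from Proposition \ref{P4.6vad}(a); for finitely generated $M\cong\mathrm{coker}(q:K^m\to K^n)$ one uses the left-exactness of $\mathrm{Hom}(-,S)$ and $\mathrm{Hom}(-,T)$ in the first variable together with the five-lemma / diagram chase on the presentation (just as in Lemma \ref{Lem5.4lg}); and for arbitrary $M$ one passes to the filtered colimit of finitely generated submodules, using that filtered colimits are exact. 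Taking $M=T$ then produces the desired section $s$ of $p$, so every epimorphism splits.

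Next I would handle monomorphisms by dualizing through the structure already in place. Let $j:M\hookrightarrow N$ be a monomorphism in $K-Mod$; I want a retraction $r:N\longrightarrow M$ with $r\circ j=1_M$. The quotient map $\pi:N\longrightarrow N/M$ is an epimorphism, and $M=\ker(\pi)$ since $K-Mod$ is abelian. By the epimorphism case just proved, $\pi$ splits: there is $\sigma:N/M\longrightarrow N$ with $\pi\sigma=1_{N/M}$. Then the standard abelian-category argument shows $N\cong M\oplus N/M$ via the map $(1_N-\sigma\pi)$ landing in $\ker\pi=M$ and furnishing the retraction $r=$ the map induced by $1_N-\sigma\pi$ followed by the canonical isomorphism $\ker\pi\cong M$; one checks $r\circ j=1_M$. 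Hence every monomorphism splits as well. I would present the monomorphism case second precisely because it is a formal consequence of the epimorphism case in any abelian category, so no new input about $K$ is needed there.

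The main obstacle, and the step to which I would devote the most care, is establishing surjectivity of $\mathrm{Hom}_{K-Mod}(M,-)$ applied to an epimorphism for \emph{arbitrary} (not just finitely generated) $M$ — equivalently, showing that projectivity of the finitely generated $K$-modules propagates to all $K$-modules in the way needed to split a general epimorphism $p:S\to T$. The finitely generated case is a diagram chase identical in shape to the one in Lemma \ref{Lem5.4lg}, using left-exactness of $\mathrm{Hom}$ in the first slot and Proposition \ref{P4.6vad}(a),(b); the passage to the colimit needs the running hypothesis that filtered colimits commute with finite limits in $K-Mod$ and the observation (already used repeatedly in Section 3, e.g.\ in the proof of Theorem \ref{Thm3.8}) that a filtered colimit of monomorphisms is a monomorphism, so that $T=\varinjlim_\lambda T_\lambda$ with each $T_\lambda\hookrightarrow T$ finitely generated, and then a lift of $T_\lambda\hookrightarrow T$ along $p$ is built compatibly. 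I would be careful to phrase the final gluing so that the section $s:T\to S$ is well-defined on the colimit; this is the one place where something could go subtly wrong, and it is worth spelling out rather than waving at, but given the machinery of Section 3 it goes through cleanly.
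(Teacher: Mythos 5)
Your overall architecture (prove one of the two cases directly, deduce the other formally from short exact sequences in the abelian category $K-Mod$) matches the paper's, but you chose the harder case to do directly and the direct argument has a genuine gap. You try to prove that $\mathrm{Hom}_{K-Mod}(M,S)\to \mathrm{Hom}_{K-Mod}(M,T)$ is surjective for every $M$ by propagating \emph{surjectivity} from $M=K^n$ to finitely generated $M$ and then to arbitrary $M$; neither propagation step works. For $M\cong\mathrm{coker}(q:K^m\to K^n)$, applying $\mathrm{Hom}(-,S)$ and $\mathrm{Hom}(-,T)$ to the presentation gives only two four-term left-exact sequences with the middle vertical maps surjective. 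A chase produces, for a given $\phi:M\to T$, a lift $\psi:K^n\to S$ of $\phi$ precomposed with $K^n\twoheadrightarrow M$, with $\psi\circ q$ landing in $\ker(p)$; to correct $\psi$ so that $\psi\circ q=0$ you need $\psi\circ q$ to lie in the image of $\mathrm{Hom}(K^n,\ker p)\to\mathrm{Hom}(K^m,\ker p)$, which is exactly the $\mathrm{Ext}^1(M,\ker p)$ obstruction. Killing it amounts to knowing $M$ is projective, which at this point is known only for $M=K^n$ (Proposition \ref{P4.5xpb}, that finitely generated modules are free, is deduced \emph{from} the present lemma). Your earlier attempt to get projectivity of $M$ from Proposition \ref{P4.6vad}(b) is also circular: lifting presentations gives a commutative square over $1_M$, not a section $M\to K^n$. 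The colimit step fails for a different reason: $\mathrm{Hom}(\varinjlim M_\lambda,-)=\varprojlim \mathrm{Hom}(M_\lambda,-)$ is an \emph{inverse} limit, and an inverse limit of surjections need not be surjective; equivalently, the individual lifts of the maps $T_\lambda\hookrightarrow T$ differ by maps into $\ker p$ and there is no reason they can be chosen coherently. (``Filtered colimits are exact'' is not relevant, since $\mathrm{Hom}$ turns colimits in the first variable into limits.)

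The paper's proof avoids both problems by treating \emph{monomorphisms} first and by propagating an actual chosen map rather than a surjectivity statement. A monomorphism $i:M\to N$ induces a monomorphism of vector spaces over the field $\mathcal E(K)=\mathrm{Hom}_{K-Mod}(K,K)$, which splits by linear algebra; the single splitting $p_K$ then induces, via the presentations $F\cong\mathrm{colim}(0\leftarrow K^m\to K^n)$ and the independence statement of Proposition \ref{P4.6vad}(b), a \emph{coherent} family of maps $p_F$ with $p_F\circ i_F=1$ for all $F$, so the inverse-limit issue disappears and Yoneda yields $p:N\to M$ with $p\circ i=1$. The epimorphism case is then formal. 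If you wish to keep your order of the two cases, the repair is the same idea: $\mathrm{Hom}_{K-Mod}(K,S)\to\mathrm{Hom}_{K-Mod}(K,T)$ is a surjection of $\mathcal E(K)$-vector spaces by Proposition \ref{P4.6vad}(a); choose an $\mathcal E(K)$-linear section $\sigma_K$ of it and propagate $\sigma_K$ --- not surjectivity --- through presentations and filtered colimits exactly as the paper does for $p_K$.
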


\begin{proof} Let $i:M\longrightarrow N$ be a monomorphism in $K-Mod$. Then,   $i_K:=Hom_{K-Mod}(K,i):Hom_{K-Mod}(K,M)\longrightarrow Hom_{K-Mod}(K,N)$
is a monomorphism of vector spaces. Hence, there exists a morphism $p_K:Hom_{K-Mod}(K,N)\longrightarrow Hom_{K-Mod}(K,M)$ such that
$p_K\circ i_K=1$. Since $K$ is Noetherian, any finitely generated module $F$ in $K-Mod$ can be expressed as a colimit
$F\cong colim(0\longleftarrow K^m\longrightarrow K^n)$. Then, since 
\begin{equation}
\begin{array}{c}
lim(0\longrightarrow Hom_{K-Mod}(K^m,M)\longleftarrow Hom_{K-Mod}(K^n,M))  \cong  Hom_{K-Mod}(F,M)\\
lim(0\longrightarrow Hom_{K-Mod}(K^m,N)\longleftarrow Hom_{K-Mod}(K^n,N))  \cong  Hom_{K-Mod}(F,N)\\
\end{array}
\end{equation} the morphisms $i_K$ and $p_K$ induce:
\begin{equation}\label{4.6vcr}
\begin{CD}
Hom_{K-Mod}(F,M)@>i_F>> Hom_{K-Mod}(F,N)@>p_F>> Hom_{K-Mod}(F,M) 
\end{CD} \qquad p_F\circ i_F=1
\end{equation} From Proposition \ref{P4.6vad}(b), we see that the morphism
$p_F$ does not depend on the choice of the presentation $F\cong  colim(0\longleftarrow K^m\longrightarrow K^n)$.   Further, since any module  in $K-Mod$ can be expressed as a colimit of its finitely generated submodules, 
we have morphisms $p_F$ and $i_F$ as in \eqref{4.6vcr} for any $F\in K-Mod$. From Yoneda lemma, it follows that the morphisms
$p_F$ are induced by 
 a morphim $p:N\longrightarrow M$ with $p\circ i=1$. Hence, every monomorphism in $K-Mod$ splits. 
Further, since every epimorphism in $K-Mod$ can be fitted into a short exact sequence, it follows that
every epimorphism in $K-Mod$ also splits. 
\end{proof}

\medskip
\begin{thm}\label{P4.5xpb} Let $K$ be a field object in $(\mathcal C,\otimes,1)$. Then, every finitely generated $K$-module is isomorphic
to  a direct sum $K^m$ for some $m\geq 1$. 
\end{thm}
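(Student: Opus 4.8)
The plan is to prove this by induction on the number of generators, using the structural results already established—in particular, that $K$ is projective (Proposition \ref{P4.6vad}(a)) and that every monomorphism and epimorphism in $K-Mod$ splits (Lemma \ref{Lem4.4qg}). Let $M$ be a finitely generated $K$-module; since $K$ is Noetherian, $M$ can be written as $colim(0\longleftarrow K^{m'}\overset{q}{\longrightarrow}K^n)$, which means there is an epimorphism $\pi:K^n\longrightarrow M$. First I would choose $n$ minimal among all such presentations by finitely generated free modules $K^n$ of which $M$ is a quotient; the existence of such a minimal $n$ follows because any surjection from $K^n$ onto $M$ gives a well-defined positive integer.

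Next I would analyze the kernel $R:=\mathrm{Ker}(\pi:K^n\longrightarrow M)$. Since $\pi$ is an epimorphism and $K^n$ is projective (Proposition \ref{P4.6vad}(a)), and since every epimorphism in $K-Mod$ splits (Lemma \ref{Lem4.4qg}), the short exact sequence $0\longrightarrow R\longrightarrow K^n\longrightarrow M\longrightarrow 0$ splits, giving $K^n\cong M\oplus R$. The crux is then to show $R=0$, which is where minimality of $n$ comes in. If $R\neq 0$, I would exhibit a nonzero morphism $R\longrightarrow K$: because every object of $\mathcal C$ is a colimit of finitely generated subobjects, $R$ has a nonzero finitely generated submodule, and one can map $K^n$ onto its components; more directly, since $R$ is a nonzero submodule of $K^n$, at least one of the coordinate projections $K^n\longrightarrow K$ is nonzero on $R$, yielding a nonzero map $R\longrightarrow K$. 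By Lemma \ref{Lem4.4qg} applied to the image (which is a nonzero subobject of $K$, hence all of $K$ since $K$ is a field object), $R$ has $K$ as a direct summand, so $R\cong K\oplus R'$, whence $M\oplus K\oplus R'\cong K^n$, i.e. $M$ is a quotient of $K^{n-1}$, contradicting minimality.

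The main obstacle I anticipate is making rigorous the step that extracts a nonzero morphism $R\longrightarrow K$ and then splits off a copy of $K$ from $R$. One has to be careful that the image of such a morphism, being a nonzero subobject of $K$, is forced to equal $K$ by the defining property of a field object (Definition \ref{D4.2}), and then that the resulting epimorphism $R\longrightarrow K$ splits (Lemma \ref{Lem4.4qg})—but Lemma \ref{Lem4.4qg} is stated for all of $K-Mod$, so this is fine. A subtler point is whether $R$ itself is finitely generated so that the inductive machinery applies cleanly: since $R$ is a direct summand of the finitely generated module $K^n$, it is a quotient of $K^n$ and hence finitely generated (a quotient of a finitely generated module is finitely generated, which follows formally from the colimit-preservation definition). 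With these pieces in place the induction closes: the base case $M\cong 0$ or $M$ a quotient of $K^1$ being a nonzero subobject-free situation forces $M\cong K$ (or $M=0$, excluded), and in general $M\cong K^m$ for the minimal $m=n$.
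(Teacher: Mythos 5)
Your route is genuinely different from the paper's: the paper uses the split epimorphism $K^n\longrightarrow M$ to embed $M$ into $K^n$, observes that $Hom_{K-Mod}(K,M)$ is then a finite-dimensional $\mathcal E(K)$-vector space, hence isomorphic to $Hom_{K-Mod}(K,K^m)$ for some $m$, and transports this back to an isomorphism $M\cong K^m$ by the presentation-plus-Yoneda argument of Proposition \ref{P4.6vad}(b). Your minimal-presentation argument avoids that machinery, and most of its steps are sound: the splitting $K^n\cong M\oplus R$, the extraction of a nonzero morphism $R\longrightarrow K$ from the coordinate projections (if all of them vanished on $R$ the inclusion $R\hookrightarrow K^n$ would be zero), the identification of its image with $K$ via Definition \ref{D4.2}, and the further splitting $R\cong K\oplus R'$ are all correct, and the worry about $R$ being finitely generated is immaterial since that is never used.

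There is, however, a genuine gap at the punchline. From $K^n\cong M\oplus K\oplus R'$ you conclude that ``$M$ is a quotient of $K^{n-1}$,'' but this is precisely a cancellation statement ($K^n\cong K\oplus W$ implies $W$ is a quotient of, indeed isomorphic to, $K^{n-1}$) and it does not follow formally from anything established so far: the copy of $K$ split off inside $K^n$ need not be a coordinate summand, and in the absence of a rank theory --- which is what the whole proposition is meant to supply --- one cannot simply ``subtract'' it. The step can be repaired: let $q:K^n\longrightarrow K$ be the projection onto the middle summand of $M\oplus K\oplus R'$; its restrictions $q_1,\dots,q_n\in\mathcal E(K)$ to the $n$ coordinate copies of $K$ cannot all vanish (else $q=0$, contradicting that $q$ is an epimorphism onto $K\neq 0$), so some $q_j$ is invertible in the field $\mathcal E(K)$, and composing $q$ with the automorphism of $K^n$ given by the corresponding elementary column operations over $\mathcal E(K)$ turns $q$ into the $j$-th coordinate projection. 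Hence $Ker(q)\cong K^{n-1}$, and since $M$ is a direct summand of $Ker(q)=M\oplus R'$ it is a quotient of $K^{n-1}$, contradicting minimality. Without this (or an equivalent cancellation argument) the induction does not close, so as written the proof is incomplete at its decisive step.
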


\begin{proof} Since $K$ is Noetherian, for every finitely generated module $M$, there exists an epimorphism 
$K^n\longrightarrow M$ for some $n\geq 1$ as in \eqref{4.1}. By Lemma \ref{Lem4.4qg}, the epimorphism
splits and hence there is an inclusion $M\hookrightarrow K^n$. Accordingly,  $Hom_{K-Mod}(K,M)$
is a subspace of the finite dimensional vector space $Hom_{K-Mod}(K,K^n)=\mathcal E(K)^n$. Hence, we have an isomorphism
$i_K:Hom_{K-Mod}(K,M)\overset{\cong}{\longrightarrow} \mathcal E(K)^m=Hom_{K-Mod}(K,K^m)$ for some $m\leq n$. 

\medskip
Then, by expressing any finitely generated $K$-module $F$ as a colimit $F\cong colim(0\longleftarrow K^p\longrightarrow K^q)$ ,  we see that  $i_K$ induces a morphism $i_F:Hom_{K-Mod}(F,M) {\longrightarrow}  Hom_{K-Mod}(F,K^m)$
for each finitely generated $F$ in $K-Mod$. Again, from Proposition \ref{P4.6vad}(b), it follows that $i_F$ is independent of the choice
of the presentation $F\cong colim(0\longleftarrow K^p\longrightarrow K^q)$. Further, since any module  in $K-Mod$ can be expressed as a colimit of its finitely generated submodules, 
we have a morphism $i_F: Hom_{K-Mod}(F,M) {\longrightarrow}  Hom_{K-Mod}(F,K^m)$  for any $F\in K-Mod$.  Since $i_K$ is an isomorphism, so is each $i_F$. From Yoneda lemma, it now follows that the isomorphisms $i_F$ are induced by 
  an isomorphism $i:M\overset{\cong}{\longrightarrow} K^m$. 
\end{proof}

\medskip
We also record here the following result that will be useful to us in Section 5.

\medskip
\begin{thm} \label{P4.6dj}  Let $K$ be a field object in $(\mathcal C,\otimes,1)$. Then, if $i:U\longrightarrow Spec(K)$ is a Zariski
open immersion, then either $U=Spec(0)$ or $U=Spec(K)$ and $i$ is an isomorphism. 
\end{thm}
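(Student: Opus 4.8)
The plan is to treat the affine case first and then glue along an affine cover.

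For the affine case, let $Spec(B)\longrightarrow Spec(K)$ be a Zariski open immersion, corresponding to a morphism $f:K\longrightarrow B$ in $Comm(\mathcal C)$; as recorded in the proof of Proposition \ref{P4.2xr}, such an $f$ is an epimorphism in $Comm(\mathcal C)$. Viewing $f$ as a morphism in $K-Mod$, its kernel $Ker(f)\longrightarrow K$ is a monomorphism in $K-Mod$, hence by Definition \ref{D4.2} either $0$ or an isomorphism. In the latter case $f=0$, so the unit $\eta_B=f\circ\eta_K$ of $B$ vanishes, and by the unit axiom of the monoid $B$ this forces $\mathrm{id}_B=0$, i.e. $B=0$ and $Spec(B)=Spec(0)$. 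Otherwise $f$ is a monomorphism in $K-Mod$, hence split by Lemma \ref{Lem4.4qg}, and writing $C=Coker(f)$ we obtain a decomposition $B\cong K\oplus C$ in $K-Mod$.

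Next I would use a second time that $f$ is an epimorphism in $Comm(\mathcal C)$: by Lemma \ref{Read1} the multiplication $\mu:B\otimes_KB\longrightarrow B$ is an isomorphism. Applying the right exact functor $B\otimes_K(-)$ to the exact sequence $K\overset{f}{\longrightarrow}B\longrightarrow C\longrightarrow 0$ in $K-Mod$, and using the canonical isomorphism $B\otimes_KK\cong B$, the map $B\otimes_Kf$ is identified with the composite $B\cong B\otimes_KK\overset{\mathrm{id}_B\otimes f}{\longrightarrow}B\otimes_KB\overset{\mu}{\longrightarrow}B$, which is $\mathrm{id}_B$ because the $K$-module structure on $B$ is $f$ followed by the multiplication of $B$. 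Hence $B\otimes_KC\cong Coker(\mathrm{id}_B)=0$. On the other hand $B\otimes_KC\cong(K\oplus C)\otimes_KC\cong C\oplus(C\otimes_KC)$, so $C$ is a retract of $0$ and therefore $C=0$. Thus $f$ is an isomorphism of $K$-modules, hence of commutative monoid objects, and $Spec(B)\longrightarrow Spec(K)$ is an isomorphism.

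For the general case, let $i:U\longrightarrow Spec(K)$ be an arbitrary Zariski open immersion and choose an affine cover $\{Spec(B_j)\longrightarrow U\}_{j\in J}$. Each composite $Spec(B_j)\longrightarrow U\longrightarrow Spec(K)$ is again a Zariski open immersion (these being stable under composition), so by the affine case either $B_j=0$ or $Spec(B_j)\longrightarrow Spec(K)$ is an isomorphism. If every $B_j=0$, the covering $\coprod_{j\in J}Spec(0)\longrightarrow U$ shows $U=Spec(0)$. If some $B_{j_0}\ne 0$, write $a:Spec(B_{j_0})\longrightarrow U$ for the corresponding member of the cover; then $i\circ a:Spec(B_{j_0})\longrightarrow Spec(K)$ is an isomorphism, so $i$ admits a right inverse and is a split epimorphism. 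Since $i$ is a Zariski open immersion it is also a monomorphism in $Sh(Aff_{\mathcal C})$, and a morphism that is both a monomorphism and a split epimorphism is an isomorphism; hence $U=Spec(K)$ and $i$ is an isomorphism.

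The heart of the argument is the affine step, and in particular the implication $B\otimes_KC=0\Rightarrow C=0$, which works because three facts dovetail: the idempotence $B\otimes_KB\cong B$ of an epimorphism of monoids (Lemma \ref{Read1}), the splitting of the unit monomorphism $K\longrightarrow B$ guaranteed by the field-object hypothesis (Lemma \ref{Lem4.4qg}), and the right exactness of $\otimes_K$. I expect the only points needing care to be the identification of $B\otimes_Kf$ with $\mathrm{id}_B$ under $\mu$ (which is really just the definition of the $K$-algebra structure on $B$) and, in the degenerate branch, the fact that $Spec(0)$ is the initial object of $Sh(Aff_{\mathcal C})$, so that a scheme covered by copies of it is empty.
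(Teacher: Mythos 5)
Your proof is correct and follows essentially the same route as the paper: the kernel dichotomy for $f:K\longrightarrow B$, the splitting $B\cong K\oplus C$ from Lemma \ref{Lem4.4qg}, Lemma \ref{Read1} giving $B\otimes_KB\cong B$ and hence $B\otimes_KC=0$, and then $C=0$ as a summand of $0$; the general case is reduced to the affine one via a cover just as in the paper. The only (harmless) variations are that you use right-exactness of $B\otimes_K(-)$ where the paper invokes flatness, and you conclude the non-affine case by ``monomorphism plus split epimorphism is an isomorphism'' where the paper uses a pullback square.
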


\begin{proof} First, we suppose that $U$ is affine. Let $U=Spec(A)$ and suppose that $A\ne 0$. Since $K$ has no proper subobjects, the induced morphism $K\longrightarrow A$ must be a monomorphism in $K-Mod$. Therefore, we can consider the short exact sequence 
$0\longrightarrow K\longrightarrow A\longrightarrow A/K\longrightarrow 0$ in $K-Mod$. Further, since 
$Spec(A)\longrightarrow Spec(K)$ is a Zariski open immersion, $A$ is a flat $K$-module. It follows that we have the following
short exact sequence in $K-Mod$:
\begin{equation}
0\longrightarrow A\otimes_KK\cong A\longrightarrow A\otimes_KA\longrightarrow A\otimes_K(A/K)\longrightarrow 0
\end{equation} Again, $K\longrightarrow A$ being an epimorphism in $Comm(\mathcal C)$, it follows
from Lemma \ref{Read1} that  $A\otimes_KK\cong A\cong A\otimes_KA$. Hence, $A\otimes_K(A/K)=0$. Using Lemma \ref{Lem4.4qg}, the monomorphism $K\longrightarrow A$ in $K-Mod$ splits and we can write $A\cong K\oplus K'$ for some $K'\in K-Mod$. Then:
\begin{equation}
(K\oplus K')\otimes_KK'\cong A\otimes_K(A/K)=0\qquad\Rightarrow\qquad 0=K\otimes_KK'\cong K'
\end{equation} Hence, $A\cong K$ and $i$ is an isomorphism. 

\medskip
In general, we consider a Zariski open immersion  $i:U\longrightarrow Spec(K)$. If $U=Spec(0)$, we are already done. Otherwise, we can  choose $V\in ZarAff(U)$ such that $V=Spec(A)$ with $A\ne 0$. From the above, it follows that
  $V\longrightarrow U\longrightarrow Spec(K)$ is an isomorphism. Then, the  following pullback square
\begin{equation}
\begin{CD}
V\cong U\times_UV\cong U\times_{Spec(K)}V@>>> V\cong Spec(K) \\
@VVV @V\cong VV \\
U @>i>> Spec(K)\\
\end{CD}
\end{equation} shows that $V\longrightarrow U$ is an isomorphism. This proves the result. 

\end{proof}

\medskip
We now come to the construction of field objects in $(\mathcal C,\otimes,1)$.
In general, for a commutative monoid $A\in Comm(\mathcal C)$, we let $\mathcal E(A)$ be the commutative ring
$\mathcal E(A):=Hom_{A-Mod}(A,A)$. Further, any morphism $f:A\longrightarrow B$ in $Comm(\mathcal C)$ induces a morphism
$\mathcal E(f):\mathcal E(A)\longrightarrow \mathcal E(B)$ of commutative rings. When $\mathcal E(A)$ is an integral domain and $A$ is Noetherian, we will
now localize $A$ with respect to the  non-zero elements
of $\mathcal E(A)$ (in the sense of \cite{AB4}) to obtain a field object $K(A)$. This is an analogue of the construction of the quotient field of an 
ordinary integral domain. 

\medskip
More precisely, for any $0\ne s\in \mathcal E(A)$, consider the localization $A_s$ of $A$ with respect to $s$ as  in 
\cite[(3.1)]{AB4}:
\begin{equation}\label{4.8zq}
A_s:=colim(A\overset{s}{\longrightarrow} A\overset{s}{\longrightarrow} A \overset{s}{\longrightarrow} \dots)
\end{equation} Further, by considering the morphisms $A_s\longrightarrow A_{st}$ for any
$s$, $t\in  \mathcal E(A)\backslash \{0\}$, we define $K(A)$ to be the colimit (see \cite[(3.3)]{AB4}):
\begin{equation}\label{4.8uq}
K(A):=\underset{s\in \mathcal E(A)\backslash \{0\}}{colim}\textrm{ }A_s
\end{equation}

\medskip
\begin{thm}\label{XABX} Let $A\in Comm(\mathcal C)$ be a Noetherian commutative monoid object such that $\mathcal E(A)$ is an integral domain. Let $K(A)$ be as defined in \eqref{4.8uq}. Then, 
$K(A)$ is a Noetherian monoid. \end{thm}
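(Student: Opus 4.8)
The plan is to deduce the statement from Proposition \ref{P4.2xr} by exhibiting the canonical morphism $A\longrightarrow K(A)$ as an epimorphism of commutative monoids with $K(A)$ flat over $A$; since $A$ is Noetherian, Proposition \ref{P4.2xr} then immediately yields that $K(A)$ is Noetherian.

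First I would observe that the colimit \eqref{4.8uq} defining $K(A)$ is \emph{filtered}: the set $\mathcal E(A)\backslash\{0\}$, ordered by divisibility, is nonempty (since $\mathcal E(A)$ is an integral domain we have $1\ne 0$, hence $A\ne 0$) and directed (for $s,t\ne 0$ we have $st\ne 0$, with $s\mid st$ and $t\mid st$). Since $\mathcal C$ is closed, the tensor product preserves colimits in each variable, so filtered colimits both in $Comm(\mathcal C)$ and in $A-Mod$ are computed on underlying objects; thus, as an $A$-module, $K(A)$ is the filtered colimit $\varinjlim_s A_s$. By \eqref{4.8zq} each $A_s$ is in turn a filtered colimit of copies of the free $A$-module $A$, so $K(A)$ is a filtered colimit of free $A$-modules, hence flat (filtered colimits being exact in $A-Mod$).

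It remains to see that $A\longrightarrow K(A)$ is an epimorphism in $Comm(\mathcal C)$. Each localization map $A\longrightarrow A_s$ is an epimorphism of commutative monoids: this is part of the localization formalism of \cite{AB4} (equivalently, $Spec(A_s)\longrightarrow Spec(A)$ is a Zariski open immersion, so it also follows from \cite[D\'{e}finition 2.9]{TV}); a direct verification reduces to $A_s\otimes_A A_s\cong A_s$, which one obtains from \eqref{4.8zq} by a cofinality argument on the index category. Now a filtered colimit in $Comm(\mathcal C)$ of epimorphisms out of a fixed object $A$ is again an epimorphism out of $A$: if $g_1,g_2:K(A)\longrightarrow C$ agree after precomposition with $A\longrightarrow K(A)$, then for every $s$ they agree after precomposition with $A_s\longrightarrow K(A)$ — because precomposing further with the epimorphism $A\longrightarrow A_s$ already equalizes them — and since $K(A)=\varinjlim_s A_s$ we conclude $g_1=g_2$.

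Putting the two halves together, $A\longrightarrow K(A)$ is a flat epimorphism and Proposition \ref{P4.2xr} applies, giving that $K(A)$ is Noetherian. The one step relying on input beyond this excerpt is that each $A\longrightarrow A_s$ is an epimorphism; this is the crux of the argument, but it is already established in the construction of localization in \cite{AB4}, so I expect the remaining work to be bookkeeping — verifying that filtered colimits are computed on underlying objects and that epimorphisms with a common source are preserved by such colimits — rather than a genuine obstacle.
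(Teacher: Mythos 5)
Your proposal is correct and follows the same overall strategy as the paper: exhibit $A\longrightarrow K(A)$ as a flat epimorphism in $Comm(\mathcal C)$ and invoke Proposition \ref{P4.2xr}. The only place the two arguments diverge is in how the epimorphism is verified. The paper does it in one stroke via the universal property of the localization: if $f\circ i=g\circ i=:h$, then $\mathcal E(h)$ inverts every $s\in\mathcal E(A)\backslash\{0\}$, so $h$ factors uniquely through $K(A)$ by \cite[$\S$ 3]{AB4}, forcing $f=g$. You instead show each $A\longrightarrow A_s$ is an epimorphism and pass to the colimit; this works, and your reduction to $A_s\otimes_AA_s\cong A_s$ is legitimate — note only that you need the \emph{converse} of the implication stated in Lemma \ref{Read1} (that $\mu:B\otimes_AB\overset{\cong}{\longrightarrow}B$ forces the two coprojections into the pushout to coincide, hence $A\longrightarrow B$ is an epimorphism), which follows from the same pushout observation used there. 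Your route has the mild advantage of relying only on the explicit colimit formulas \eqref{4.8zq}--\eqref{4.8uq} rather than on the universal property quoted from \cite{AB4}, and you also supply a proof of flatness (filtered colimit of free modules, using that filtered colimits are exact in $A-Mod$) where the paper simply cites \cite[Proposition 3.2--3.3]{AB4}. The bookkeeping points you defer — that the filtered colimits are created by the forgetful functor to $\mathcal C$, and that a colimit of epimorphisms out of a fixed object is an epimorphism — are indeed routine.
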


\begin{proof} In \cite[Proposition 3.2-3.3]{AB4}, it is already shown that the localization $K(A)$ is a commutative monoid
along with a morphism $i:A\longrightarrow K(A)$ in $Comm(\mathcal C)$ making $K(A)$ into a flat $A$-module. 

\medskip We want
to show that $i:A\longrightarrow K(A)$ is also an epimorphism in $Comm(\mathcal C)$. For this, we consider morphisms
$f,g:K(A)\longrightarrow B$ in $Comm(\mathcal C)$ with $h:=f\circ i=g\circ i:A\longrightarrow B$. Then,  we have $\mathcal E(h)
=\mathcal E(f)\circ \mathcal E(i)=\mathcal E(g)\circ \mathcal E(i)$.   Now, for any
$s\in \mathcal E(A)\backslash \{0\}$, $\mathcal E(i)(s)$ is a unit in $\mathcal E(K(A))$. Hence, $\mathcal E(h)(s)$ is a unit
in $\mathcal E(B)$. Then, from \cite[$\S$ 3]{AB4}, it follows that there exists a unique morphism  $j:K(A)\longrightarrow B$
in $Comm(\mathcal C)$ from the localization $K(A)$  such that $h=j\circ i$. Consequently, we have $f=j=g$. 

\medskip
Now since $i:A\longrightarrow K(A)$ is a flat epimorphism of commutative monoids and $A$ is Noetherian, it follows
from Proposition \ref{P4.2xr} that $K(A)$ is Noetherian. 
\end{proof} 

\medskip
\begin{thm}\label{Final4} Let $A\in Comm(\mathcal C)$ be a Noetherian commutative monoid object such that $\mathcal E(A)$ is an integral domain. Let $K(A)$ be as defined in \eqref{4.8uq}. Then, if $A$ is a finitely generated $A$-module, then $K(A)$ is a field object in $(\mathcal C,\otimes,1)$.  \end{thm}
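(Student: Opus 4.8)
We may assume $A\ne 0$ (this is forced by $\mathcal E(A)$ being an integral domain), and write $K:=K(A)$. By Proposition \ref{XABX}, $K$ is Noetherian (and, from the proof of that proposition, the structure map $i:A\longrightarrow K$ is a flat epimorphism in $Comm(\mathcal C)$, so $K\otimes_AK\cong K$ by Lemma \ref{Read1}). In view of Definition \ref{D4.2}, what remains is to verify that $K\ne 0$ and that every monomorphism $I\hookrightarrow K$ in $K-Mod$ with $I\ne 0$ is an isomorphism, and the plan is to deduce all of this from the single assertion that $\mathcal E(K)=Hom_{K-Mod}(K,K)$ is a field. Indeed, suppose this is known and let $0\ne I\hookrightarrow K$ in $K-Mod$. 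By (C1), applied with the monoid $K$, $I$ is the filtered union of its finitely generated $K$-submodules, which form a filtered system by Proposition \ref{P3.2ct}; since $I\ne 0$, some member $0\ne I_0\subseteq I$ is finitely generated, so it suffices to show $I_0=K$. As $K$ is Noetherian, $I_0$ is finitely presented, hence there is an epimorphism $K^n\twoheadrightarrow I_0$; composing with $I_0\hookrightarrow K$ gives a morphism $(c_1,\dots,c_n):K^n\longrightarrow K$ with $c_j\in\mathcal E(K)$ whose image is $I_0$, and by Proposition \ref{P2.6} this image is $\sum_{j}Im(c_j)$. Since $I_0\ne 0$, some $c_j\ne 0$; as $\mathcal E(K)$ is a field, $c_j$ is then invertible, so $Im(c_j)=K$ and therefore $I_0=K$. (The same argument applied to $I_0=K$ itself records that $K\ne 0$, once $\mathcal E(K)$ is known to be a nonzero ring.)

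To prove that $\mathcal E(K)$ is a field I would use the explicit presentation $K=\varinjlim_{s}A_s$ of \eqref{4.8uq}. Since $A$ is the unit of $(A-Mod,\otimes_A,A)$ and $K$ is the unit of $(K-Mod,\otimes_K,K)$, the free--forgetful adjunctions give, for every $K$-module $M$, natural identifications $Hom_{K-Mod}(K,M)\cong Hom(1,M)\cong Hom_{A-Mod}(A,M)$, and likewise $\mathcal E(A_s)\cong Hom(1,A_s)$ for each $s$. Hence $\mathcal E(K)\cong Hom_{A-Mod}(A,\varinjlim_{s}A_s)$. Now the hypothesis that $A$ is finitely generated as an $A$-module says precisely that $Hom_{A-Mod}(A,-)$ sends filtered colimits of monomorphisms in $A-Mod$ to colimits; granting (see the next paragraph) that the transition maps $A_s\longrightarrow A_{st}$ of the system in \eqref{4.8uq} are monomorphisms, we obtain $\mathcal E(K)\cong\varinjlim_{s}\mathcal E(A_s)$. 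By the construction of the localizations in \cite{AB4} one has $\mathcal E(A_s)\cong\mathcal E(A)[s^{-1}]$, so $\mathcal E(K)$ is the localization of the commutative ring $\mathcal E(A)$ at the multiplicative set $\mathcal E(A)\setminus\{0\}$; since $\mathcal E(A)$ is an integral domain this is the field of fractions $\mathrm{Frac}(\mathcal E(A))$, a field (and in particular $\mathcal E(K)\ne 0$, whence $K\ne 0$).

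The step I expect to be the real obstacle is the claim that the transition maps $A_s\longrightarrow A_{st}$ — equivalently the canonical maps $A\longrightarrow A_s$, where $A_s=\varinjlim(A\overset{s}{\longrightarrow}A\overset{s}{\longrightarrow}\cdots)$ as in \eqref{4.8zq} — are monomorphisms. The kernel of $A\longrightarrow A_s$ is the colimit of the kernels $Ker(s^k:A\to A)$, i.e. the ``$s$-torsion'' of $A$, and one must rule this out; this is exactly where the hypothesis that $\mathcal E(A)=Hom_{A-Mod}(A,A)$ is an integral domain has to be brought to bear (a nonzero endomorphism annihilated by the nonzero endomorphism $s$ would violate the domain property), possibly together with the finite generation of $A$. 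If the transition maps failed to be monomorphisms one could instead replace each $A_s$ by its image in $K$ — these do form a filtered system of monomorphisms with colimit $K$ — but then the identification $\mathcal E(A_s)\cong\mathcal E(A)[s^{-1}]$ is no longer available, so controlling this torsion seems unavoidable. Once $\mathcal E(K)\cong\mathrm{Frac}(\mathcal E(A))$ is in hand, the reduction of the first paragraph completes the proof.
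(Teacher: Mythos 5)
Your overall architecture coincides with the paper's: both proofs hinge on identifying $\mathcal E(K(A))\cong Hom_{A-Mod}(A,K(A))$ with the fraction field $Q(\mathcal E(A))$ by pushing $Hom_{A-Mod}(A,-)$ (finite generation of $A$) through the filtered colimits \eqref{4.8zq} and \eqref{4.8uq}, and both reduce the ``no proper subobjects'' condition to $\mathcal E(K)$ being a field. Your first paragraph is actually a nice improvement on the paper's last step: the paper deduces $I'=0$ or $I'\cong K$ by extending the $0$-or-isomorphism dichotomy on $Hom_{K-Mod}(K,-)$ ``as in Proposition \ref{P4.5xpb}'' (a Yoneda-type extension through presentations), whereas your argument via a finitely generated $0\ne I_0\subseteq I$, an epimorphism $K^n\twoheadrightarrow I_0$, and $I_0=\sum_j Im(c_j)$ with some $c_j$ invertible is more direct and self-contained. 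That part is correct.

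The step you flag as ``the real obstacle'' --- that $s:A\longrightarrow A$ is a monomorphism for every $0\ne s\in\mathcal E(A)$, so that the transition maps in \eqref{4.8zq} and \eqref{4.8uq} are monomorphisms --- is indeed where the paper does its work, and your sketch of how to close it is not yet a proof. The issue is that $I:=Ker(s)$ is a subobject of $A$, not an endomorphism: from $I\ne 0$ you cannot directly produce ``a nonzero endomorphism annihilated by $s$,'' because $A$ need not be a generator of $A-Mod$ (think of a structure sheaf with no global sections supported where the torsion lives), so $Hom_{A-Mod}(A,I)=0$ does not by itself force $I=0$. The paper's argument is in two stages: first, for any $f\in Hom_{A-Mod}(A,I)$ one has $s\circ(i\circ f)=0$ with $i:I\hookrightarrow A$, so $i\circ f=0$ in the integral domain $\mathcal E(A)$ and hence $f=0$, giving $Hom_{A-Mod}(A,I)=0$; second --- and this is the ingredient you were missing --- the \emph{Noetherian} hypothesis on $A$ (not merely finite generation of $A$) guarantees that every finitely generated $A$-module is a quotient of some $A^n$, whence $Hom_{A-Mod}(F,I)\hookrightarrow Hom_{A-Mod}(A^n,I)=0$ for every finitely generated $F$, and then by (C1) $Hom_{A-Mod}(M,I)=0$ for all $M$; taking $M=I$ kills $id_I$ and yields $I=0$. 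With that lemma in place your remaining steps (monomorphy of all transition maps, $\mathcal E(A_s)\cong\mathcal E(A)_s$, $\mathcal E(K)\cong Q(\mathcal E(A))$, and the subobject argument) go through exactly as you describe.
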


\begin{proof} We have already checked that $K(A)$ is Noetherian. We will first show that $K(A)\ne 0$. We set $K:=K(A)$ and  
$S:=\mathcal E(A)\backslash \{0\}$. We choose some $s\in S$ and let $i:I\longrightarrow A$ be the kernel of $s:A\longrightarrow A$. Then, if
we consider any $f\in Hom_{A-Mod}(A,I)$, we see that $s\circ (i\circ f)=0$. Since $\mathcal E(A)$ is an integral domain and
$s\ne 0$, it follows that $i\circ f=0$. Further, since $i$ is a monomorphism, it follows that $f=0$. Therefore, we see that
$Hom_{A-Mod}(A,I)=0$. Since $A$ is Noetherian, it now follows as in the proof of Lemma \ref{Lem4.4qg} that 
$I=0$. Hence $s:A\longrightarrow A$ is a monomorphism for any $s\in S$. Now, considering the filtered
colimits appearing in \eqref{4.8zq} and \eqref{4.8uq}, it is clear that we have a monomorphism $A\longrightarrow K=K(A)$. Hence,
$K(A)\ne 0$. 

\medskip Since any $0\ne s:A\longrightarrow A$ is a monomorphism  and $A$ is finitely generated, it follows
from the filtered colimit of monomorphisms defining  $A_s$ in \eqref{4.8zq} that $\mathcal E(A_s)=Hom_{A_s-Mod}(A_s,A_s)\cong Hom_{A-Mod}(A,A_s)=\mathcal E(A)_s$. For any $0\ne t\in \mathcal E(A)$, the monomorphism $t:A\longrightarrow A$ induces
a monomorphism $t:A_s\longrightarrow A_s$ of filtered colimits.  Now the same
reasoning as in the previous paragraph shows that for any $0\ne s,t\in \mathcal E(A)$, 
the morphism $t:A_s\longrightarrow A_{st}$ is a monomorphism. Then, since $A$ is finitely generated, considering the filtered colimit
of monomorphisms defining $K(A)$ in \eqref{4.8uq}, we have $\mathcal E(K(A))=Hom_{K(A)-Mod}(K(A),K(A))\cong Hom_{A-Mod}(A,K(A))
=Q(\mathcal E(A))$ where $Q(\mathcal E(A))$ denotes the field of fractions of the integral domain $\mathcal E(A)$. 

\medskip On the other hand, let $i':I'\longrightarrow K=K(A)$ be a monomorphism
in $K-Mod$. Then, the induced map $i_K':Hom_{K-Mod}(K,I')\longrightarrow Hom_{K-Mod}(K,K)=\mathcal E(K)$ is a monomorphism of vector spaces over the field $\mathcal E(K)=Q(\mathcal E(A))$.  Hence,
$i_K'$ is either $0$ or an isomorphism. 
If $i_K':Hom_{K-Mod}(K,I')\longrightarrow Hom_{K-Mod}(K,K)$ is an isomorphism, it follows as in the proof of Proposition \ref{P4.5xpb}
that $i':I'\longrightarrow K$ is an isomorphism. On the other hand, if $i_K'=0$, it follows similarly that $I'=0$. 
 
\end{proof}

\medskip

\medskip
\section{Points of a Noetherian scheme   over a field object} 

\medskip

\medskip 
For an ordinary scheme $Z$ in usual algebraic geometry over $Spec(R)$, where $R$ is a ring, the  points of $Z$ over a field $K$ are the morphisms
$Spec(K)\longrightarrow Z$. Given a Noetherian, quasi-compact and semi-separated scheme $X$ over $(\mathcal C,\otimes,1)$, we will consider in this section
the   points of $X$ over a field object $K$ in $\mathcal C$. 
Accordingly, a  point of $X$ over $K$  corresponds to a   morphism 
$Spec(K)\longrightarrow X$ and therefore a pullback functor $QCoh(X)\longrightarrow QCoh(Spec(K))= K-Mod$.  In this section, we want
to find out which kinds of symmetric monoidal functors from $QCoh(X)$ to 
$K-Mod$  can be described as pullbacks by some morphism $Spec(K)\longrightarrow X$. 

\medskip
Let $A$ and $B$ be ordinary commutative rings and let $F:A-Mod\longrightarrow B-Mod$ be a cocontinuous  strong tensor functor from the category of $A$-modules to the category of $B$-modules. Then, $F$ induces a morphism $f:A=Hom_{A-Mod}(A,A)
\longrightarrow Hom_{B-Mod}(B,B)=B$ of rings. Then, it may be verified easily that the functor $F$ is given by ``extension of scalars'' 
along the morphism $f:A\longrightarrow B$. However, this argument does not extend directly to the case of commutative
monoid objects $A$ and $B$ in a symmetric monoidal category. In fact, in order to study functors between module categories over 
commutative monoid objects in $(\mathcal C,\otimes,1)$, we need the concept of ``normal functors'' from 
Vitale \cite{Vitale}. 

\medskip
\begin{defn}(see \cite[Definition 4.2]{Vitale}) \label{Normfunc} Let $A$ and $B$ be commutative monoid objects in $(\mathcal C,\otimes,1)$. A normal functor
$(F,\tau):A-Mod\longrightarrow B-Mod$ consists of a functor $F:A-Mod\longrightarrow B-Mod$ along with a family of morphisms:
\begin{equation}
\tau_{M,N}:\underline{Hom}_A(M,N)\longrightarrow \underline{Hom}_B(F(M),F(N)) 
\qquad\forall\textrm{ }M,N\in A-Mod
\end{equation} in $\mathcal C$  satisfying the following three conditions:

\medskip
(1) The family $\{\tau_{M,N}\}_{M,N\in A-Mod}$ is natural in $M$ and $N$. 

\medskip
(2) The family $\{\tau_{M,N}\}_{M,N\in A-Mod}$  is compatible with internal composition, i.e., for $M$, $N$, $P\in A-Mod$, we have a commutative diagram:
\begin{equation}
\begin{CD}
\underline{Hom}_A(M,N)\otimes \underline{Hom}_A(N,P) @>\tau_{M,N}\otimes \tau_{N,P}>> 
\underline{Hom}_B(F(M),F(N))\otimes \underline{Hom}_B(F(N),F(P)) \\ 
@V\circ VV @V\circ VV \\
\underline{Hom}_A(M,P) @>\tau_{M,P}>> \underline{Hom}_B(F(M),F(P))\\ 
\end{CD}
\end{equation} Further, the family is also compatible with the identity $1\longrightarrow A\longrightarrow \underline{Hom}_A(M,M)$
of the monoid $\underline{Hom}_A(M,M)$ for any $M\in A-Mod$.

\medskip
(3) For any $f:M\longrightarrow N$ in $A-Mod$, the following diagram commutes:
 \begin{equation}
\begin{CD}
1@>1>> 1 \\
@V\widetilde{f}VV @V\widetilde{F(f)}VV \\
\underline{Hom}_A(M,N) @>\tau_{M,N}>> \underline{Hom}_B(F(M),F(N))\\ 
\end{CD}
\end{equation} where $\widetilde{f}:1\longrightarrow \underline{Hom}_A(M,N)$ (resp. $\widetilde{F(f)}:1
\longrightarrow  \underline{Hom}_B(F(M),F(N))$) is the factorization of $f$ (resp. $F(f)$) through
$\underline{Hom}_A(M,N)\longrightarrow \underline{Hom}(M,N)$ (resp. $\underline{Hom}_B(F(M),F(N))\longrightarrow \underline{Hom}(F(M),F(N))$). 
\end{defn}

\medskip
\begin{thm}\label{TYpcf} Let $A$, $B$ be commutative monoid objects in $(\mathcal C,\otimes,1)$ and suppose that
$A$ is Noetherian. Let
$(F,\tau):A-Mod\longrightarrow B-Mod$ be a normal functor such that $F:A-Mod\longrightarrow B-Mod$ is a cocontinuous
symmetric monoidal functor. Then, there is a morphism $f:A\longrightarrow B$ in $Comm(\mathcal C)$ such that 
$F(M)=B\otimes_AM$ for each $M\in B-Mod$. 
\end{thm}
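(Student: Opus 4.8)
The plan is to read the morphism $f$ and a comparison natural transformation straight off the normal structure $\tau$, and then to verify that the comparison is an isomorphism by the same induction on the ``size'' of a module that runs through this section. First, since $F$ is symmetric monoidal, its unit constraint provides an isomorphism $u:F(A)\xrightarrow{\cong}B$ of $B$-modules, which I fix. Under the canonical identifications $\underline{Hom}_A(A,A)\cong A$ and, via $u$, $\underline{Hom}_B(F(A),F(A))\cong\underline{Hom}_B(B,B)\cong B$ of monoid objects in $\mathcal C$ (the internal endomorphism monoid of the unit of a closed monoidal category being the unit), the component $\tau_{A,A}$ becomes a morphism $f:A\to B$ in $\mathcal C$; conditions (1) and (2) of Definition \ref{Normfunc} — naturality of $\tau$ and its compatibility with internal composition and with the units of the endomorphism monoids — say precisely that $f$ preserves multiplication and unit, so $f$ is a morphism of monoid objects, hence (as $A$ and $B$ are commutative) of $Comm(\mathcal C)$. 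Specialising condition (3) to $M=N=A$ moreover identifies the ring homomorphism $\mathcal E(A)\to\mathcal E(B)$ induced by $F$ (and $u$) with $\mathcal E(f)$, which is the ``on elements'' shadow of what we want.

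Next, the identifications $\underline{Hom}_A(A,M)\cong M$ and $\underline{Hom}_B(F(A),F(M))\cong\underline{Hom}_B(B,F(M))\cong F(M)$ turn $\tau_{A,M}$ into a morphism $\psi_M:M\to F(M)$ in $\mathcal C$, natural in $M$ by (1). Viewing $F(M)$ as an $A$-module by restriction of scalars $f^{*}$ along $f$, the case $(M,N,P)=(A,A,M)$ of the composition-compatibility (2) — bearing in mind that the $A$-action on $M\cong\underline{Hom}_A(A,M)$ is post-composition with $\underline{Hom}_A(A,A)=A$ — shows that $\psi_M:M\to f^{*}F(M)$ is $A$-linear. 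Since $B\otimes_A(-):A-Mod\to B-Mod$ is left adjoint to $f^{*}$, the family $(\psi_M)_M$ corresponds by adjunction to a natural transformation $\theta:B\otimes_A(-)\Rightarrow F$ of cocontinuous functors $A-Mod\to B-Mod$. Because $\psi_A=f:A\to f^{*}B$, its adjunct $\theta_A$ is the canonical isomorphism $B\otimes_AA\cong B$ followed by $u^{-1}$, hence an isomorphism, and it remains only to see that every $\theta_M$ is invertible.

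Since $F$ is cocontinuous and $\mathbb Z$-linear it preserves finite biproducts, so naturality of $\theta$ forces $\theta_{A^{n}}$ to be an isomorphism for all $n\geq1$ once $\theta_A$ is one. If $M$ is a finitely generated $A$-module, then $A$ being Noetherian (Definition \ref{Def4.1}) presents $M$ as the cokernel of some $q:A^{m}\to A^{n}$; applying the cocontinuous functors $B\otimes_A(-)$ and $F$ yields right-exact sequences $B^{m}\to B^{n}\to B\otimes_AM\to0$ and $F(A^{m})\to F(A^{n})\to F(M)\to0$, and $\theta$ furnishes a morphism between them whose first two vertical components are isomorphisms, whence $\theta_M$ is an isomorphism. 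For general $M$, condition (C1) (equivalently Lemma \ref{L3.3xptA}(b)) writes $M=\varinjlim_{i}M_i$ as the filtered colimit of its finitely generated submodules; cocontinuity of both functors together with naturality of $\theta$ then give $\theta_M=\varinjlim_i\theta_{M_i}$, an isomorphism. Hence $F(M)\cong B\otimes_AM$, naturally in $M$, which is the assertion.

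The only genuinely delicate point is the very first stage: unwinding the three normality axioms to recognise $\tau_{A,A}$ as a morphism of commutative monoids and $\tau_{A,M}$ as an $A$-linear natural map $M\to f^{*}F(M)$. This is exactly where the hypothesis of normality is indispensable — being cocontinuous and symmetric monoidal does not on its own control the behaviour of $F$ on morphisms, and without $\tau$ one cannot even produce $f$ as a morphism in $Comm(\mathcal C)$. After $\theta$ has been built, the remaining passage from finite free modules to finitely generated modules to arbitrary modules is a routine induction modelled on the arguments already used for Lemmas \ref{Lem5.4lg} and \ref{Lem4.4qg} and Proposition \ref{P4.5xpb}.
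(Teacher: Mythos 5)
Your proposal is correct and follows essentially the same route as the paper: read $f$ off $\tau_{A,A}$ using the normality axioms, then bootstrap the identification $F\cong B\otimes_A(-)$ from $A$ to $A^n$ to finitely generated modules (via the Noetherian hypothesis and cocontinuity) to arbitrary modules (via condition (C1)). Your explicit construction of the comparison transformation $\theta:B\otimes_A(-)\Rightarrow F$ from the components $\tau_{A,M}$ makes the naturality needed in the colimit steps more visible than the paper's terser argument, but it is a refinement of the same proof, not a different one.
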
 

\begin{proof} We set $f:=\tau_{A,A}:A=\underline{Hom}_A(A,A)\longrightarrow 
\underline{Hom}_B(B,B)=B$. From the conditions in Definition \ref{Normfunc}, we see that $f$ is a morphism in
$Comm(\mathcal C)$. Since $F$ is a symmetric monoidal functor, we see that $F(A)=B=B\otimes_AA$. Now, if $M$ is a finitely
generated $A$-module, we can express $M$ as a colimit $M=colim(0\longleftarrow A^m \longrightarrow A^n)$. Then, since
$F$ is cocontinuous, we have:
\begin{equation}\label{5.4pmn}
F(M)=colim(0\longleftarrow F(A^m)\longrightarrow F(A^n))=colim(0\longleftarrow B\otimes_AA^m
\longrightarrow B\otimes_AA^n)=B\otimes_AM
\end{equation} Finally, since any module in $A-Mod$ can be expressed as a colimit of finitely generated submodules,
it follows from \eqref{5.4pmn} that $F(M)=B\otimes_AM$ for any $M\in A-Mod$. 
\end{proof}

 \medskip 
Let $X$ be a Noetherian, quasi-compact and semi-separated scheme  over $(\mathcal C,\otimes,1)$ and let $K$ be a 
field object. The result of Proposition \ref{TYpcf} suggests that morphisms $Spec(K)\longrightarrow X$ should correspond to ``normal functors'' from $QCoh(X)$
to $K-Mod$ that are also cocontinuous and symmetric monoidal. However, it is not immediately clear how we can define the notion of a ``normal
functor'' from $QCoh(X)$
to $K-Mod$. In order to introduce this notion, we will make use of the following definition from \cite{BC}, which explains
what it means for a functor from $QCoh(X)$
to $K-Mod$ to be ``local'' with respect to some $U\in ZarAff(X)$. 
 
 \medskip
 \begin{defn}\label{Def4.4} (see \cite[Definition 2.3.5]{BC}) Let $(C,\otimes,1_C)$, $(D,\otimes,1_D)$ and $(E,\otimes,1_E)$ be   preadditive symmetric
 monoidal categories. Let $(i^*,i_*)$ be a pair of adjoint functors between $E$ and $D$, with $i^*:E\longrightarrow D$  a symmetric monoidal
 functor and $i_*:D\longrightarrow E$ a lax tensor functor. Further, suppose that the counit $\varepsilon: i^*i_*\longrightarrow id_D$ is an isomorphism. Let
 $\eta: id_E\longrightarrow i_*i^*$ be the unit natural transformation. 
 Then, a   functor $F:E\longrightarrow C$ 
 is said to be $i$-local if $F\eta :F\longrightarrow Fi_*i^*$ is an isomorphism. 
  \end{defn}

\medskip
From Definition \ref{Def4.4}, it may be easily verified that we have an equivalence of categories:
\begin{equation}
Fun(D,C)\doublerightleftarrow{\circ i^*}{\circ i_*}\{F\in Fun(E,C), \textrm{ }\mbox{$i$-local}\}
\end{equation} Here $Fun(D,C)$ (resp. $Fun(E,C)$) denotes the category of functors from $D$ (resp. from $E$)
to $C$. We will also need the following result from \cite{BC}.

  \medskip
  \begin{thm}\label{P4.6fp} (see \cite[Proposition 2.3.6]{BC}) In the setup of Definition \ref{Def4.4}, one has: 
  
  \medskip
  (a) Let $f$ be any morphism in $E$ such that $i^*(f)$ is an isomorphism in $D$. Then, a  functor $F:E\longrightarrow C$ is $i$-local if and only if $F(f)$ is an isomorphism in $C$ for every such morphism $f$. 
  
  \medskip
  (b) Let $Fun_{c\otimes}(D,C)$ (resp. $Fun_{c\otimes}(E,C)$ ) be the category of all cocontinuous symmetric
  monoidal functors from $D$ (resp. $E$) to $C$. Then, we have an equivalence of categories: 
  \begin{equation}
  Fun_{c\otimes}(D,C)\doublerightleftarrow{\circ i^*}{\circ i_*} \{F\in Fun_{c\otimes}(E,C), \textrm{ }\mbox{$i$-local}\}
  \end{equation} 
  
  \end{thm}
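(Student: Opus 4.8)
The plan is to prove part (a) first and then derive part (b) from it together with the formal properties of the adjunction $(i^*,i_*)$. For part (a) the crucial point is that the triangle identity $\varepsilon_{i^*M}\circ i^*(\eta_M)=\mathrm{id}_{i^*M}$, combined with the standing hypothesis that $\varepsilon:i^*i_*\to \mathrm{id}_D$ is a natural isomorphism, forces $i^*(\eta_M)$ to be an isomorphism (namely $\varepsilon_{i^*M}^{-1}$) for every $M\in E$. Granting this: if $F(f)$ is an isomorphism whenever $i^*(f)$ is, then in particular each $F(\eta_M)$ is an isomorphism, so $F\eta:F\to Fi_*i^*$ is a natural isomorphism, i.e.\ $F$ is $i$-local. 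Conversely, if $F$ is $i$-local and $f:M\to N$ has $i^*(f)$ invertible, apply $F$ to the naturality square $\eta_N\circ f=(i_*i^*f)\circ \eta_M$: since $F(\eta_M)$ and $F(\eta_N)$ are isomorphisms by $i$-locality and $F(i_*i^*f)$ is an isomorphism (as $i_*i^*f$ is one), it follows that $F(f)$ is an isomorphism. This settles (a).

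For part (b) I would argue in three steps. First, precomposition with $i^*$ carries $Fun_{c\otimes}(D,C)$ into the $i$-local part of $Fun_{c\otimes}(E,C)$: if $G:D\to C$ is cocontinuous and symmetric monoidal then $G\circ i^*$ is cocontinuous because $i^*$ is a left adjoint and symmetric monoidal as a composite of such functors, and it is $i$-local by part (a) since $i^*(\eta_M)$, hence $G(i^*(\eta_M))$, is an isomorphism. Second, precomposition with $i_*$ carries an $i$-local, cocontinuous, symmetric monoidal $F:E\to C$ to a cocontinuous symmetric monoidal functor $F\circ i_*:D\to C$; this is the substantive step. For cocontinuity: given a diagram $\{X_j\}$ in $D$, the canonical comparison $\phi:\mathrm{colim}_j i_*X_j\to i_*(\mathrm{colim}_j X_j)$ becomes invertible after applying $i^*$ (because $i^*$ is cocontinuous and $\varepsilon$ is an isomorphism), so $F(\phi)$ is an isomorphism by part (a), and together with cocontinuity of $F$ this shows $F\circ i_*$ preserves the colimit. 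For the monoidal structure: the lax structure morphisms $i_*X\otimes i_*Y\to i_*(X\otimes Y)$ and $1_E\to i_*1_D$ of $i_*$ also become invertible after applying $i^*$ (a short diagram chase, using that $i^*$ is strong monoidal and $\varepsilon$ is an isomorphism), hence applying the strong monoidal functor $F$ and using part (a) yields natural isomorphisms $F i_*X\otimes F i_*Y\cong F i_*(X\otimes Y)$ and $1_C\cong F i_*1_D$, with the coherence axioms inherited from those of $F$ and $i_*$. Third, the two constructions are mutually quasi-inverse: on $Fun_{c\otimes}(D,C)$ the round trip $G\mapsto G\circ i^*\circ i_*=G\circ (i^*i_*)$ is naturally isomorphic to the identity via $G\varepsilon$, and on the $i$-local side the round trip $F\mapsto F\circ i_*\circ i^*=F\circ(i_*i^*)$ is naturally isomorphic to the identity via $F\eta$, which is an isomorphism exactly because $F$ is $i$-local. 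The plain-functor equivalence recorded just before the statement already handles this bookkeeping at the level of underlying functors, so only the preservation of cocontinuity and of the symmetric monoidal structure is genuinely new.

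The step I expect to be the main obstacle is the second one: showing that $F\circ i_*$ remains cocontinuous and symmetric monoidal even though $i_*$ is only continuous and only lax monoidal. The mechanism is uniform — the relevant comparison morphisms for $i_*$ (the colimit comparison and the lax structure maps) fail to be isomorphisms in $D$, but $i^*$ sends them to isomorphisms, and part (a) then upgrades this to $F$ — yet one must pin these comparison morphisms down precisely and check that the resulting isomorphisms are natural and satisfy the monoidal coherence conditions. Everything else is a formal consequence of the triangle identities and of the hypothesis that $\varepsilon$ is an isomorphism.
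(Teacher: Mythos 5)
Your argument is correct. Note that the paper itself gives no proof of this proposition --- it is simply quoted from Brandenburg--Chirvasitu \cite[Proposition 2.3.6]{BC} --- so there is nothing internal to compare against; your reconstruction (part (a) via the triangle identity forcing $i^*(\eta)$ to be invertible, and part (b) by transporting the colimit-comparison and lax-monoidal structure maps of $i_*$ through part (a) to make $F\circ i_*$ cocontinuous and strong monoidal) is exactly the standard argument underlying the cited result, and the one step you flag as delicate is indeed the only place requiring a genuine (but routine) coherence check.
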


\medskip
 In particular, let $X$ be a Noetherian, quasi-compact and semi-separated scheme over $(\mathcal C,\otimes,1)$ and let $i:U\longrightarrow X$
  be a Zariski open immersion with $U$ affine. Then, as mentioned in Section 2, it follows that $i^*:QCoh(X)\longrightarrow QCoh(U)$
  is a symmetric monoidal functor and $i_*:QCoh(U)\longrightarrow QCoh(X)$ is a lax tensor functor. Further, we know from
  Proposition \ref{P2.5qp} that the counit $i^*i_*\longrightarrow id_{QCoh(U)}$ is an isomorphism of functors. Therefore, 
  in particular, we can set $D=QCoh(U)$ and $E=QCoh(X)$ in Definition \ref{Def4.4}. Further, given a commutative monoid object $A$ in $(\mathcal C,\otimes,1)$, we 
  can set $C=A-Mod$ in Definition \ref{Def4.4}. Accordingly, we will say that a   functor 
$F:QCoh(X)\longrightarrow A-Mod$ is $U$-local if the natural transformation $F\longrightarrow  F\circ i_*\circ i^*$ is an isomorphism. 
We are now ready to introduce normal functors from $QCoh(X)$ to $A-Mod$ for some $A\in Comm(\mathcal C)$. 

\medskip
\begin{defn}\label{NDef} Let $X$ be a Noetherian, quasi-compact and semi-separated scheme over $(\mathcal C,\otimes,1)$. Let $A$ be a commutative monoid object in $\mathcal C$. Then, we will say that a functor $F:QCoh(X)\longrightarrow A-Mod$ is  normal if there   exists a  normal functor $(G^U,\tau^U):B-Mod\longrightarrow A-Mod$ with $F\cong G^U\circ i^*$
for each $(i:U=Spec(B)\longrightarrow X)\in ZarAff(X)$ such that $F$ is $i$-local.
\end{defn}

\medskip
We remark that the condition in Definition \ref{NDef} could be satisfied vacuously, i.e., there might not exist 
$U\in ZarAff(X)$ such that $F$ is $U$-local. 

  \medskip 
Let $K$ be a field object in $(\mathcal C,\otimes,1)$. We will now show that morphisms $Spec(K)\longrightarrow X$ correspond to cocontinuous symmetric monoidal normal 
functors from $QCoh(X)$ to $K-Mod$. For this, the first step is to show  that  given a cocontinuous symmetric monoidal functor $F:QCoh(X)\longrightarrow K-Mod$, there exists 
$U\in ZarAff(X)$ such that $F$ is $U$-local. The latter is an analogue of the result of Brandenburg and Chirvasitu \cite[Lemma 3.3.6]{BC}
in the case of usual schemes. 

\medskip
Let $\mathcal M$ be a quasi-coherent sheaf on $X$ and let $\mathcal N$, $\mathcal N'$ be quasi-coherent submodules
of $\mathcal M$. For any $U\in ZarAff(X)$, consider the morphism:
\begin{equation}
\mathcal O_X(U)\longrightarrow \underline{Hom}_{\mathcal O_X(U)}(\mathcal N(U),(\mathcal M/\mathcal N')(U)) 
\end{equation} that corresponds, by adjointness, to the composition 
$\mathcal O_X(U)\otimes_{\mathcal O_X(U)}\mathcal N(U)\longrightarrow \mathcal M(U)\longrightarrow (\mathcal M/\mathcal N')(U)$ in $\mathcal O_X(U)-Mod$. We set:
\begin{equation}\label{4.3gh}
U\mapsto (\mathcal N':\mathcal N)(U):=Ker(\mathcal O_X(U)\longrightarrow \underline{Hom}_{\mathcal O_X(U)}(\mathcal N(U),(\mathcal M/\mathcal N')(U)) )
\end{equation}

\medskip
\begin{thm}\label{A380} Let $X$ be a Noetherian, quasi-compact and semi-separated scheme over $(\mathcal C,\otimes,1)$. 
Let $\mathcal M$ be a quasi-coherent sheaf on $X$ and let $\mathcal N$, $\mathcal N'$ be quasi-coherent submodules
of $\mathcal M$. 
Let $[\mathcal N':\mathcal N]$ be the set of all
quasi-coherent submodules $\mathcal I$ of $\mathcal O_X$ such that the composed morphism:
\begin{equation}\label{4.4hg}
\mathcal I\otimes_{\mathcal O_X} \mathcal N\longrightarrow \mathcal O_X\otimes_{\mathcal O_X}\mathcal N\longrightarrow \mathcal M
\end{equation} factors through $\mathcal N'$. Then,  if $\mathcal N$ is finitely generated, we have:

\medskip
(a) The association $
U\mapsto (\mathcal N':\mathcal N)(U)$ for all $U\in ZarAff(X)
$ defines a quasi-coherent submodule of  $\mathcal O_X$. 

\medskip
(b) The quasi-coherent submodule $(\mathcal N':\mathcal N)$ equals the sum of all the submodules ${\mathcal I\in [\mathcal N':\mathcal N]} $. 
 \end{thm}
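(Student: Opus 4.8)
The plan is to realize $(\mathcal N':\mathcal N)$ as the kernel, in the abelian category $QCoh(X)$, of a single morphism of quasi-coherent sheaves built from an internal hom, and then to read off (a) and (b) from this description together with a flat base-change argument. First I would record that, since $X$ is Noetherian and $\mathcal N$ is finitely generated, each $\mathcal N(U)$ is a finitely \emph{presented} $\mathcal O_X(U)$-module in the sense of Definition \ref{Def4.1}, say $\mathcal N(U)\cong colim(0\longleftarrow \mathcal O_X(U)^m\overset{q}{\longrightarrow}\mathcal O_X(U)^n)$; consequently $\underline{Hom}_{\mathcal O_X(U)}(\mathcal N(U),P)\cong Ker(P^n\longrightarrow P^m)$ is a finite limit for every $\mathcal O_X(U)$-module $P$. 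A Zariski open immersion $W\longrightarrow U$ yields a flat morphism $\mathcal O_X(U)\longrightarrow\mathcal O_X(W)$, and tensoring with a flat module commutes with finite limits; combining this with quasi-coherence of $\mathcal N$ and of $\mathcal M/\mathcal N'$ shows that the assignment $\mathcal H\colon U\mapsto\underline{Hom}_{\mathcal O_X(U)}(\mathcal N(U),(\mathcal M/\mathcal N')(U))$ is itself a quasi-coherent sheaf on $X$. The adjoint maps $\phi_U\colon\mathcal O_X(U)\longrightarrow\mathcal H(U)$ of the composites $\mathcal N(U)\longrightarrow\mathcal M(U)\longrightarrow(\mathcal M/\mathcal N')(U)$ are natural in $U$ by naturality of the internal-hom adjunction, so they glue to a morphism $\phi\colon\mathcal O_X\longrightarrow\mathcal H$ in $QCoh(X)$.

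For (a) I would then set $(\mathcal N':\mathcal N):=Ker(\phi)$, taken in the abelian category $QCoh(X)$ (see \cite[Proposition 2.9]{AB2}). Since Zariski restrictions are flat, kernels in $QCoh(X)$ are computed sectionwise, so $(\mathcal N':\mathcal N)(U)=Ker(\phi_U)$, which is precisely the object described in \eqref{4.3gh}; and since $Ker(\phi_U)$ is a submodule of $\mathcal O_X(U)$, this exhibits $(\mathcal N':\mathcal N)$ as a quasi-coherent submodule of $\mathcal O_X$.

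For (b) the crucial bookkeeping step I would isolate is that, for a quasi-coherent submodule $\mathcal I\subseteq\mathcal O_X$, the composite $\mathcal I\otimes_{\mathcal O_X}\mathcal N\longrightarrow\mathcal O_X\otimes_{\mathcal O_X}\mathcal N\longrightarrow\mathcal M$ factors through $\mathcal N'$ if and only if the restriction of $\phi_U$ to $\mathcal I(U)$ vanishes for every $U\in ZarAff(X)$. Indeed, factoring through the monomorphism $\mathcal N'\hookrightarrow\mathcal M$ is equivalent to the composite into $\mathcal M/\mathcal N'$ being zero, which may be checked on affines, and on sections over $U$ the internal-hom adjunction identifies $\mathcal I(U)\otimes_{\mathcal O_X(U)}\mathcal N(U)\longrightarrow(\mathcal M/\mathcal N')(U)$ with $\phi_U\circ(\mathcal I(U)\hookrightarrow\mathcal O_X(U))$. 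Applying this with $\mathcal I=(\mathcal N':\mathcal N)=Ker(\phi)$ shows $(\mathcal N':\mathcal N)\in[\mathcal N':\mathcal N]$, hence $(\mathcal N':\mathcal N)\subseteq\sum_{\mathcal I\in[\mathcal N':\mathcal N]}\mathcal I$; applying it to an arbitrary $\mathcal I\in[\mathcal N':\mathcal N]$ gives $\mathcal I(U)\subseteq Ker(\phi_U)=(\mathcal N':\mathcal N)(U)$ for all $U$, so $\mathcal I\subseteq(\mathcal N':\mathcal N)$, and the universal property of the sum (Proposition \ref{P2.6}(b)) yields the reverse inclusion, hence equality.

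The step I expect to be the main obstacle is the base-change claim that $\mathcal H$ is quasi-coherent: this is exactly where the Noetherian hypothesis is essential, as it is what permits replacing $\mathcal N(U)$ by a \emph{finite} presentation and thereby writing $\underline{Hom}_{\mathcal O_X(U)}(\mathcal N(U),-)$ as a finite limit that survives flat base change; without finite presentation the internal hom need not commute with the Zariski restriction functors and the assignment $U\mapsto(\mathcal N':\mathcal N)(U)$ would fail to be a sheaf. A secondary point requiring care is the assertion that kernels in $QCoh(X)$ are computed sectionwise, which again rests on flatness of the Zariski restriction maps.
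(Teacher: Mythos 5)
Your proposal is correct and follows essentially the same route as the paper: it uses the Noetherian hypothesis to replace $\mathcal N(U)$ by a finite presentation, writes $\underline{Hom}_{\mathcal O_X(U)}(\mathcal N(U),(\mathcal M/\mathcal N')(U))$ as a finite limit that commutes with the flat Zariski restriction maps (the paper's \eqref{4.6gh}), and then deduces quasi-coherence of the kernel; part (b) is likewise the same adjunction bookkeeping, showing $(\mathcal N':\mathcal N)$ both lies in $[\mathcal N':\mathcal N]$ and contains every member. The only cosmetic difference is that you package the construction as the kernel of a single morphism $\phi:\mathcal O_X\longrightarrow\mathcal H$ in $QCoh(X)$, while the paper verifies the base-change isomorphism sectionwise.
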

 
 \begin{proof} (a) Let $U\in ZarAff(X)$ and let $V\in ZarAff(U)$. We need to show that
 $(\mathcal N':\mathcal N)(V)\cong (\mathcal N':\mathcal N)(U)\otimes_{\mathcal O_X(U)}\mathcal O_X(V)$. Since $\mathcal N$ is finitely
 generated and $X$ is Noetherian, it follows from Definition \ref{Def4.1} that there exists a morphism $\mathcal O_X(U)^m\longrightarrow \mathcal O_X(U)^n$ such that:
 \begin{equation}\label{4.5gh}
 \begin{array}{c}
 \mathcal N(U)=colim(0\longleftarrow \mathcal O_X(U)^m\longrightarrow \mathcal O_X(U)^n)\\
 \mathcal N(V)\cong \mathcal N(U)\otimes_{\mathcal O_X(U)}\mathcal O_X(V)=colim(0\longleftarrow \mathcal O_X(V)^m
 \longrightarrow \mathcal O_X(V)^n)\\
 \end{array}
 \end{equation} From \eqref{4.5gh}, it follows that:
 \begin{equation}\label{4.6gh}
 \begin{array}{l}
 \underline{Hom}_{\mathcal O_X(U)}(\mathcal N(U),(\mathcal M/\mathcal N')(U)))\otimes_{\mathcal O_X(U)}\mathcal O_X(V) \\
 \cong lim(0\longrightarrow (\mathcal M/\mathcal N')(U))^m\longleftarrow (\mathcal M/\mathcal N')(U))^n)\otimes_{\mathcal O_X(U)}\mathcal O_X(V)\\
 \cong lim(0\longrightarrow (\mathcal M/\mathcal N')(V))^m\longleftarrow (\mathcal M/\mathcal N')(V))^n) \\
\cong  \underline{Hom}_{\mathcal O_X(V)}(\mathcal N(V),(\mathcal M/\mathcal N')(V)))
 \end{array}
 \end{equation} Now, since the kernel defining  $(\mathcal N':\mathcal N)(U)$ in \eqref{4.3gh} is a finite limit, it follows from \eqref{4.6gh} that
 we have $(\mathcal N':\mathcal N)(V)\cong (\mathcal N':\mathcal N)(U)\otimes_{\mathcal O_X(U)}\mathcal O_X(V)$. 
 
 \medskip
 (b) From part (a), we know 
that $(\mathcal N':\mathcal N)$ is a quasi-coherent submodule of $\mathcal O_X$. For  a quasi-coherent submodule $\mathcal I$ of $\mathcal O_X$, the morphism in   \eqref{4.4hg} factors through $\mathcal N'$
 if and only if the composition $\mathcal I\otimes_{\mathcal O_X} \mathcal N\longrightarrow \mathcal O_X\otimes_{\mathcal O_X}\mathcal N\longrightarrow \mathcal M
 \longrightarrow \mathcal M/\mathcal N'$ is $0$.  Then, using adjointness, it follows that 
 $\mathcal I\in [\mathcal N':\mathcal N]$ if and only for each $U\in ZarAff(X)$,  the composition $\mathcal I(U)\longrightarrow \mathcal O_X(U)
 \longrightarrow \underline{Hom}_{\mathcal O_X(U)}(\mathcal N(U),
 (\mathcal M/\mathcal N')(U))$ is $0$.   By definition, we have:
 \begin{equation} \label{4.7vy}
  (\mathcal N':\mathcal N)(U):=Ker(\mathcal O_X(U)\longrightarrow \underline{Hom}_{\mathcal O_X(U)}(\mathcal N(U),(\mathcal M/\mathcal N')(U)) )
\end{equation} and hence any $\mathcal I\in [\mathcal N':\mathcal N]$ must be a submodule of $(\mathcal N':\mathcal N)$.  In particular, from   
\eqref{4.7vy} it is also clear that $(\mathcal N':\mathcal N)\in [\mathcal N':\mathcal N]$. Thus, given any quasi-coherent submodule $\mathcal I'$ of 
$\mathcal O_X$ such that $\mathcal I\subseteq \mathcal I'$ $\forall$ $\mathcal I\in [\mathcal N':\mathcal N]$, we have
$(\mathcal N':\mathcal N)\subseteq \mathcal I'$. Hence $(\mathcal N':\mathcal N)$ is the sum of all submodules $\mathcal I\in [\mathcal N':\mathcal N]$. 
 \end{proof}

\medskip
\begin{lem}\label{Lem4.5} Let $X$ be a Noetherian, quasi-compact and semi-separated scheme over $(\mathcal C,\otimes,1)$ and let 
$\mathcal I$ be a quasi-coherent submodule of $\mathcal O_X$. Let $K$ be a field object in $(\mathcal C,\otimes,1)$. 
Let $F$ be a symmetric monoidal functor from
$QCoh(X)$ to $K-Mod$. Then, if the induced map $F(\mathcal I\longrightarrow \mathcal O_X)$ is an epimorphism
in $K-Mod$, it must be an isomorphism. 
\end{lem}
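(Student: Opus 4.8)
The plan is to pass to the quotient $\mathcal Q:=\mathcal O_X/\mathcal I$ and show that $F(\mathcal Q)=0$, from which the isomorphism statement should follow. Since every restriction morphism $\mathcal O_X(U)\to\mathcal O_X(W)$ along a Zariski open immersion $W\to U$ of affines is flat, the sectionwise cokernel $U\mapsto\mathrm{coker}(\mathcal I(U)\to\mathcal O_X(U))$ is again quasi-coherent, so $\mathcal Q\in QCoh(X)$ and we get an exact sequence $0\to\mathcal I\overset{j}{\longrightarrow}\mathcal O_X\overset{\pi}{\longrightarrow}\mathcal Q\to 0$ in the abelian category $QCoh(X)$; moreover the multiplication of $\mathcal O_X$ descends to a commutative monoid structure on $\mathcal Q$ for which $\pi$ is an epimorphism, so by the argument of Lemma \ref{Read1} (applied inside $QCoh(X)$) one has $\mathcal Q\otimes_{\mathcal O_X}\mathcal Q\cong\mathcal Q$. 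The elementary but crucial observation is that the morphism
\[
\mathrm{id}_{\mathcal Q}\otimes_{\mathcal O_X}j:\ \mathcal Q\otimes_{\mathcal O_X}\mathcal I\longrightarrow\mathcal Q\otimes_{\mathcal O_X}\mathcal O_X\cong\mathcal Q
\]
is the zero morphism, since on each affine $U=Spec(A)$ it carries $\bar a\otimes x$ to $\overline{ax}=0$ as $x\in\mathcal I(U)$.

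Applying the symmetric monoidal ($\mathbb Z$-linear) functor $F$, we have $F(\mathcal O_X)\cong K$, $F(\mathcal Q\otimes_{\mathcal O_X}\mathcal I)\cong F(\mathcal Q)\otimes_K F(\mathcal I)$, and $F$ sends the displayed map to $\mathrm{id}_{F(\mathcal Q)}\otimes_K F(j):F(\mathcal Q)\otimes_K F(\mathcal I)\to F(\mathcal Q)\otimes_K K\cong F(\mathcal Q)$, which is therefore zero. On the other hand $F(j)$ is an epimorphism by hypothesis and $-\otimes_K F(\mathcal Q)$ is right exact, so $\mathrm{id}_{F(\mathcal Q)}\otimes_K F(j)$ is also an epimorphism; being simultaneously zero and epic, it forces $F(\mathcal Q)=0$.

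It remains to deduce that the epimorphism $F(j)$ is an isomorphism, and this is the step I expect to be the real obstacle: $F$ need not be left exact, so $F(\mathcal Q)=0$ does not by itself make $F(j)$ a monomorphism. The plan is first to reduce to the case that $\mathcal I$ is finitely generated. By Theorem \ref{Thm3.8} we may write $\mathcal I=\varinjlim_\mu\mathcal I_\mu$ over its finitely generated quasi-coherent submodules, a filtered system of monomorphisms; using (as holds in the applications) that $F$ preserves this filtered colimit, the image of $F(j)$ is the directed union of the images of the composites $F(\mathcal I_\mu\hookrightarrow\mathcal O_X)$, and since $K=K^1$ is a finitely generated $K$-module whose only subobjects are $0$ and $K$, the epimorphism $F(j)$ must already be induced by some $\mathcal I_{\mu_0}$ (and then by every larger $\mathcal I_\mu$ as well). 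In the finitely generated Noetherian case one then combines $F(\mathcal Q)=0$ with the local presentations of Definition \ref{Def4.1}: split $F(j)$ by Lemma \ref{Lem4.4qg} and argue that the summand of $F(\mathcal I)$ complementary to $K$ must vanish. Making this last implication precise — essentially controlling the analogue of $\mathrm{Tor}_1(\mathcal Q,K)$ with only the symmetric monoidal (and, in the applications, cocontinuous) structure of $F$ at one's disposal — is where the bulk of the work lies, and it is exactly the place at which the Noetherian and field-object hypotheses genuinely enter.
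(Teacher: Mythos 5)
There is a genuine gap, and you have in fact flagged it yourself: the entire content of the lemma is the passage from ``$F(\mathcal I\to\mathcal O_X)$ is an epimorphism'' to ``it is a monomorphism,'' and your proposal stops exactly there. The computation that $F(\mathcal O_X/\mathcal I)=0$ is correct (it follows from additivity and right-exactness of $\otimes_K$ as you say), but it is a dead end: since $F$ is only right exact, $F(\mathcal O_X/\mathcal I)=0$ gives no control over $\ker F(j)$ --- in the model case $F=-\otimes_A B$ this kernel is $\mathrm{Tor}_1^A(A/I,B)$, and the classical proof that it vanishes when $IB=B$ goes through localization at primes, a tool with no counterpart here. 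Your proposed repair also quietly strengthens the hypotheses: the reduction of $\mathcal I$ to a finitely generated submodule requires $F$ to preserve a filtered colimit, but the lemma assumes only that $F$ is symmetric monoidal (and $\mathbb Z$-linear), not cocontinuous. Finally, splitting $F(j)$ via Lemma \ref{Lem4.4qg} to write $F(\mathcal I)\cong K\oplus L'$ leaves you needing $L'=0$, which is precisely the Tor-type statement you admit you cannot prove.

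The idea you are missing is to exploit the multiplicative structure of $\mathcal I$ itself rather than of the quotient. Because $\mathcal I$ is a submodule of $\mathcal O_X$, the multiplication gives a commutative (non-unital) monoid structure $\mathcal I\otimes_{\mathcal O_X}\mathcal I\to\mathcal I$ compatible with $\mathcal O_X\otimes_{\mathcal O_X}\mathcal I\to\mathcal I$, so $L:=F(\mathcal I)$ is a non-unital commutative monoid in $K\text{-}Mod$ with a monoid map $f:=F(j):L\to K$. Lemma \ref{Lem5.4lg} (which uses that $\mathcal E(K)$ is a field and that $K$ is Noetherian) produces a section $u:K\to L$ with $f\circ u=\mathrm{id}_K$. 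One then checks from the commutative triangles
\begin{equation*}
\begin{CD}
L\otimes_KL @>f\otimes 1_L>> K\otimes_KL \\
@VVV @VVV \\
L @= L
\end{CD}
\end{equation*}
that $u$ is a two-sided unit for $L$, whence the composite $L\cong L\otimes_KK\xrightarrow{1_L\otimes u}L\otimes_KL\xrightarrow{f\otimes 1_L}K\otimes_KL\cong L$ is the identity; by the interchange law this exhibits $f\otimes_K1_K$, hence $f$ itself, as a split monomorphism. Combined with the hypothesis that $f$ is an epimorphism, this gives the isomorphism directly, with no need for $F(\mathcal O_X/\mathcal I)=0$, for left exactness, or for cocontinuity.
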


\begin{proof} Since $F$ is a symmetric monoidal functor, we know that $F(\mathcal O_X)=K$. We set $L:=F(\mathcal I)$. Then, it 
is clear that $L$ is a (not necessarily unital) commutative monoid object in $K-Mod$ and we have an epimorphism
$f:L\longrightarrow K$ in $K-Mod$. It follows from Lemma \ref{Lem5.4lg} that  
$Hom_{K-Mod}(K,f):Hom_{K-Mod}(K,L)\longrightarrow Hom_{K-Mod}(K,K)$ is a surjection. We choose any 
$u:K\longrightarrow L$ such that  $u$ is mapped to the identity map $K\longrightarrow K$ by this surjection $Hom_{K-Mod}(K,f)$. We now consider the following
two commutative diagrams: 

\begin{equation*}
\begin{tikzpicture}
      \matrix[matrix of math nodes,column sep=5pc,row sep=3em]
      {
                         |(A)| \mathcal I     &          \\
        |(C)| \mathcal I\otimes_{\mathcal O_X} \mathcal I & |(D)| \mathcal O_X\otimes_{\mathcal O_X} \mathcal I     \\
      };
      \begin{scope}[->]
             \draw (C) -- (A);
         \draw (C) -- (D);
         \draw (D) -- (A);
      \end{scope}
   \end{tikzpicture} \qquad\qquad 
 \begin{tikzpicture}
      \matrix[matrix of math nodes,column sep=5pc,row sep=3em]
      {
                         |(A)| L &              \\
        |(C)| L\otimes_K L & |(D)| K\otimes_K L     \\
      };
      \begin{scope}[->]
             \draw (C) -- (A);
         \draw (C) -- (D);
         \draw (D) -- (A);
      \end{scope}
   \end{tikzpicture}
   \end{equation*} where the diagram on the right is obtained by applying the symmetric monoidal functor $F$. Since
   $f\circ u=id$   as mentioned above, the composition
   $\begin{CD}K\otimes_KL@>u\otimes_K 1_L>> L\otimes_KL@>f\otimes_K 1_L>>K\otimes_KL\end{CD}$ is also the identity. Then 
   we have the following commutative diagram:
   \begin{equation}\label{4.8eqp}
  \begin{tikzpicture}[baseline=(current  bounding  box.center)]
      \matrix[matrix of math nodes,column sep=5pc,row sep=3em]
      {
                        & |(A)| L      &         \\
        |(C)| K\otimes_K L & |(D)| L\otimes_K L   & |(E)| K\otimes_KL  \\
      };
      \begin{scope}[->]
             \draw (C) to node {$\cong\qquad$}  (A);
         \draw  (C) edge node [below] {$u\otimes_K1_L$} (D);
         \draw (D) -- (A);
         \draw (D) edge node [below] {$f\otimes_K1_L$}  (E);
         \draw (E) to node {$\qquad\cong$} (A);
      \end{scope}
   \end{tikzpicture}
   \end{equation} From \eqref{4.8eqp}, it follows that $L$ is actually a unital commutative monoid in $K-Mod$ with unit
   morphism $u:K\longrightarrow L$. Therefore,  the following composition is identical to the isomorphism
   $L\otimes_KK\cong L$:
   \begin{equation}\label{4.9qc}
   \begin{CD}
   L\otimes_KK @>1_L\otimes_Ku>> L\otimes_KL@>f\otimes_K1_L>> K\otimes_KL@>\cong>> L
   \end{CD}
   \end{equation} Since $(1_K\otimes_Ku)\circ (f\otimes_K1_K)=(f\otimes_K1_L)\circ (1_L\otimes_Ku)$, it follows from \eqref{4.9qc}
   that $f\otimes_K1_K:L\otimes_KK\longrightarrow K\otimes_KK$ is a monomorphism. Then, the morphism $\begin{CD}f :L\cong L\otimes_KK @>f\otimes_K1_K>>K\otimes_KK\cong K
   \end{CD}$ is also a monomorphism. Since $f$ is already
   an epimorphism, we now know that $f$ is an isomorphism. 
\end{proof} 

\medskip
\begin{thm}\label{thm4.6c} Let $X$ be a Noetherian, quasi-compact and semi-separated scheme over $(\mathcal C,\otimes,1)$ and let 
 $K$ be a field object in $(\mathcal C,\otimes,1)$. 
Let $F$ be  a cocontinuous symmetric monoidal functor from
$QCoh(X)$ to $K-Mod$. Then, there exists $U\in ZarAff(X)$ such that if $\mathcal I$ is a quasi-coherent submodule
of $\mathcal O_X$ with $\mathcal I|_U=\mathcal O_X|_U$, the functor  $F$ maps the monomorphism 
$\mathcal I\longrightarrow \mathcal O_X$ to an isomorphism. 
\end{thm}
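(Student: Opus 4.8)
The plan is to argue by contradiction, using quasi-compactness to pass to a finite affine cover and then the fact that a field object has no proper non-zero subobjects. First I would fix, since $X$ is quasi-compact, a finite affine cover $\{U_i = Spec(A_i)\}_{1\le i\le n}$ of $X$. Suppose no $U \in ZarAff(X)$ has the stated property; applying this to each $U_i$ we obtain, for every $i$, a quasi-coherent submodule $\mathcal I_i$ of $\mathcal O_X$ with $\mathcal I_i|_{U_i} = \mathcal O_X|_{U_i}$ such that $F(\mathcal I_i \longrightarrow \mathcal O_X)$ is \emph{not} an isomorphism in $K-Mod$. By Lemma \ref{Lem4.5} (this is where the Noetherian hypotheses on $X$ and on $K$ enter), $F(\mathcal I_i \longrightarrow \mathcal O_X)$ is then not an epimorphism either.

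Next I would glue these together. By Proposition \ref{P2.6}(b) the sum $\mathcal I := \sum_{i=1}^n \mathcal I_i$ is a quasi-coherent submodule of $\mathcal O_X$, and since $\mathcal I_i|_{U_i} = \mathcal O_X|_{U_i}$ we get $\mathcal I|_{U_i} = \mathcal O_X|_{U_i}$ for every $i$. Hence the cokernel $\mathcal O_X/\mathcal I$ in $QCoh(X)$ restricts to $0$ on each $U_i$; using that $X$ is semi-separated, for any $V \in ZarAff(X)$ the products $U_i \times_X V$ are affine and form a finite affine cover of $V$, with $(\mathcal O_X/\mathcal I)(U_i\times_X V) \cong \mathcal O_X(U_i\times_X V)\otimes_{A_i}(\mathcal O_X/\mathcal I)(U_i) = 0$, so \cite[Corollaire 2.11]{TV} forces $(\mathcal O_X/\mathcal I)(V) = 0$. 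Therefore $\mathcal I = \mathcal O_X$, and consequently the canonical morphism $\bigoplus_{i=1}^n \mathcal I_i \longrightarrow \mathcal O_X$ — which factors as $\bigoplus_i \mathcal I_i \twoheadrightarrow \sum_i \mathcal I_i = \mathcal O_X$ — is an epimorphism in $QCoh(X)$.

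Now I would apply $F$. Being cocontinuous (hence additive, so preserving finite coproducts, cokernels and therefore epimorphisms) and symmetric monoidal (so $F(\mathcal O_X) \cong K$), $F$ sends this to an epimorphism $\bigoplus_{i=1}^n F(\mathcal I_i) \longrightarrow K$ in $K-Mod$ whose $i$-th component is $F(\mathcal I_i \longrightarrow \mathcal O_X)$. For each $i$, the image of $F(\mathcal I_i \longrightarrow \mathcal O_X)$ is a subobject of $K$, i.e. a monomorphism into $K$; since $K$ is a field object it is either $0$ or an isomorphism, and it cannot be an isomorphism because $F(\mathcal I_i \longrightarrow \mathcal O_X)$ is not an epimorphism. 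Thus each $F(\mathcal I_i \longrightarrow \mathcal O_X)$ is $0$, so $\bigoplus_i F(\mathcal I_i) \longrightarrow K$ is the zero morphism; but a zero morphism with target $K \ne 0$ is not an epimorphism, a contradiction. Hence the required $U$ exists, and one may in fact take it to be one of the $U_i$.

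The one genuinely non-formal point is the step $\mathcal I = \mathcal O_X$, i.e. that a quasi-coherent submodule of $\mathcal O_X$ which equals $\mathcal O_X$ on each member of the finite affine cover is all of $\mathcal O_X$; this is precisely where semi-separatedness of $X$ and the descent statement \cite[Corollaire 2.11]{TV} are needed. Everything else — preservation of epimorphisms and coproducts by $F$, the field-object property of $K$, and the conversion of ``not an isomorphism'' into ``not an epimorphism'' via Lemma \ref{Lem4.5} — is routine once these tools are in place, and the argument closely parallels \cite[Lemma 3.3.6]{BC}.
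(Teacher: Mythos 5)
Your proof is correct and follows essentially the same route as the paper's: a finite affine cover, the sum $\sum_i\mathcal I_i=\mathcal O_X$, the factorization of $\bigoplus_i\mathcal I_i\to\mathcal O_X$ through that sum, and the field-object property forcing each $F(\mathcal I_i\to\mathcal O_X)$ to vanish. The only (immaterial) difference is that you invoke Lemma \ref{Lem4.5} at the outset to convert ``not an isomorphism'' into ``not an epimorphism,'' whereas the paper runs the contradiction with ``not an epimorphism'' and applies Lemma \ref{Lem4.5} at the end to upgrade the conclusion.
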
 

\begin{proof} Let $\{U_i\}$, $1\leq i\leq n$ be an affine Zariski cover of $X$. Suppose that for each $i$, there exists a quasi-coherent
submodule $\mathcal I_i$ of $\mathcal O_X$ such that $\mathcal I_i|_{U_i}=\mathcal O_X|_{U_i}$ and $F(\mathcal I_i)\longrightarrow
F(\mathcal O_X)=K$ is not an epimorphism. Since $K$ is a field object, it follows that $F(\mathcal I_i)=0$. We now consider
the direct sum $\bigoplus_{i=1}^n\mathcal I_i$ and the sum $\sum_{i=1}^n\mathcal I_i\hookrightarrow \mathcal O_X$. Since each
$F(\mathcal I_i)=0$ and the morphism $\bigoplus_{i=1}^n\mathcal I_i\longrightarrow 
\mathcal O_X$ factors through $\sum_{i=1}^n\mathcal I_i$, we see that the composition
 $F(\bigoplus_{i=1}^n\mathcal I_i)\longrightarrow F(\sum_{i=1}^n\mathcal I_i)\longrightarrow F(\mathcal O_X)=K$ is $0$.  
Further, from the proof of Proposition \ref{P2.6}, we know that $\bigoplus_{i=1}^n\mathcal I_i\longrightarrow
 \sum_{i=1}^n\mathcal I_i$ is an epimorphism. Since $F$ preserves small colimits (and hence preserves cokernels), 
 $F(\bigoplus_{i=1}^n\mathcal I_i)\longrightarrow F(\sum_{i=1}^n\mathcal I_i)$ is an epimorphism. It now follows that
 $F(\sum_{i=1}^n\mathcal I_i)\longrightarrow F(\mathcal O_X)=K$ is $0$. 
 
 \medskip
 On the other hand, we see that for any given $1\leq j\leq n$, the composition $\mathcal O_X|_{U_j}=\mathcal I_j|_{U_j}\longrightarrow
(\sum_{i=1}^n\mathcal I_i)|_{U_j}\longrightarrow \mathcal O_X|_{U_j}$ is an isomorphism. Hence, $(\sum_{i=1}^n\mathcal I_i)|_{U_j}\longrightarrow \mathcal O_X|_{U_j}$
is an epimorphism for each $j$. Since the $\{U_i\}_{1\leq i\leq n}$ form a cover of $X$, it follows that
$\sum_{i=1}^n\mathcal I_i\longrightarrow \mathcal O_X$ is also an epimorphism and therefore $\sum_{i=1}^n\mathcal I_i\overset{\cong}{\longrightarrow}\mathcal O_X$. 
This contradicts the fact that the morphism $F(\sum_{i=1}^n\mathcal I_i)\longrightarrow F(\mathcal O_X)=K$ is $0$. Therefore, there exists
at least one $1\leq i_0\leq n$ such that for any quasi-coherent submodule $\mathcal I$ of $\mathcal O_X$ with $\mathcal I|_{U_{i_0}}
=\mathcal O_X|_{U_{i_0}}$, $F(\mathcal I\longrightarrow \mathcal O_X)$ is always an epimorphism. Combining with the result of 
Lemma \ref{Lem4.5}, we see that $F(\mathcal I\longrightarrow \mathcal O_X)$ is actually an isomorphism for any such
$\mathcal I$. 

\end{proof}

\begin{lem}\label{Lem4.08} Let $\mathcal N$ be a quasi-coherent sheaf on $X$ and let $\mathcal M$, $\mathcal M_1$ and
$\mathcal M_2$ be  quasi-coherent submodules of $\mathcal N$. Let $\mathcal I$ 
be a quasi-coherent submodule of $\mathcal O_X$ such that the morphisms
$\mathcal I\otimes_{\mathcal O_X}\mathcal M_1\longrightarrow \mathcal N$, $\mathcal I\otimes_{\mathcal O_X}\mathcal M_2
\longrightarrow \mathcal N$ factor through $\mathcal M$. Then, the morphism
$\mathcal I\otimes_{\mathcal O_X} (\mathcal M_1+\mathcal M_2)\longrightarrow \mathcal N$ factors through
$\mathcal M$.  
\end{lem}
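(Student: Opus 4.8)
The plan is to use the explicit description of $\mathcal M_1+\mathcal M_2$ from Proposition \ref{P2.6}(b) as the image of the canonical morphism $q:\mathcal M_1\oplus \mathcal M_2\longrightarrow \mathcal N$ in the abelian category $QCoh(X)$; in particular the induced morphism $q':\mathcal M_1\oplus \mathcal M_2\longrightarrow \mathcal M_1+\mathcal M_2$ is an epimorphism in $QCoh(X)$. First I would apply the functor $\mathcal I\otimes_{\mathcal O_X}-$ to $q'$. Evaluating on each $U\in ZarAff(X)$, where $QCoh(X)$ restricts to the abelian category of $\mathcal O_X(U)$-modules and $\mathcal I(U)\otimes_{\mathcal O_X(U)}-$ is right exact (being a left adjoint), one sees that $\mathcal I\otimes_{\mathcal O_X}-$ preserves epimorphisms and commutes with the finite biproduct, so that we obtain an epimorphism
\begin{equation*}
(\mathcal I\otimes_{\mathcal O_X}\mathcal M_1)\oplus (\mathcal I\otimes_{\mathcal O_X}\mathcal M_2)\;\cong\; \mathcal I\otimes_{\mathcal O_X}(\mathcal M_1\oplus \mathcal M_2)\;\xrightarrow{\;\mathcal I\otimes q'\;}\;\mathcal I\otimes_{\mathcal O_X}(\mathcal M_1+\mathcal M_2).
\end{equation*}
By naturality of the identification $\mathcal O_X\otimes_{\mathcal O_X}(-)\cong \mathrm{id}$, the structure morphism $\mathcal I\otimes_{\mathcal O_X}(\mathcal M_1+\mathcal M_2)\longrightarrow \mathcal N$ precomposed with $\mathcal I\otimes q'$ is exactly the morphism $\mathcal I\otimes_{\mathcal O_X}(\mathcal M_1\oplus \mathcal M_2)\longrightarrow \mathcal N$, which under the biproduct decomposition restricts on the $i$-th summand to the given morphism $\mathcal I\otimes_{\mathcal O_X}\mathcal M_i\longrightarrow \mathcal N$.

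Next I would form the quotient $\mathcal N/\mathcal M\in QCoh(X)$ and consider the composite $\mathcal I\otimes_{\mathcal O_X}(\mathcal M_1+\mathcal M_2)\longrightarrow \mathcal N\longrightarrow \mathcal N/\mathcal M$. Since $\mathcal I\otimes q'$ is an epimorphism, it suffices to show that the composite $(\mathcal I\otimes_{\mathcal O_X}\mathcal M_1)\oplus (\mathcal I\otimes_{\mathcal O_X}\mathcal M_2)\longrightarrow \mathcal N\longrightarrow \mathcal N/\mathcal M$ is zero; but precomposing with each of the two biproduct inclusions recovers the composite $\mathcal I\otimes_{\mathcal O_X}\mathcal M_i\longrightarrow \mathcal N\longrightarrow \mathcal N/\mathcal M$, and each of these vanishes by the hypothesis that $\mathcal I\otimes_{\mathcal O_X}\mathcal M_i\longrightarrow \mathcal N$ factors through $\mathcal M$. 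Hence the map out of the biproduct is zero, and therefore so is $\mathcal I\otimes_{\mathcal O_X}(\mathcal M_1+\mathcal M_2)\longrightarrow \mathcal N\longrightarrow \mathcal N/\mathcal M$. Since $\mathcal M=Ker(\mathcal N\longrightarrow \mathcal N/\mathcal M)$ in the abelian category $QCoh(X)$ (using \cite[Proposition 2.9]{AB2}), this means precisely that $\mathcal I\otimes_{\mathcal O_X}(\mathcal M_1+\mathcal M_2)\longrightarrow \mathcal N$ factors through $\mathcal M$, which is the assertion.

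The only point that requires any care is the right-exactness of $\mathcal I\otimes_{\mathcal O_X}-$ on $QCoh(X)$ together with its compatibility with finite direct sums, which I expect to be the ``main obstacle'' only in a purely formal sense: both properties are checked section-wise over the affines $U\in ZarAff(X)$, where the functor becomes $\mathcal I(U)\otimes_{\mathcal O_X(U)}-$ on $\mathcal O_X(U)-Mod$, and everything else is a diagram chase in the abelian category $QCoh(X)$. I would also remark that, in contrast with the neighbouring results of this section, no finiteness hypothesis on $\mathcal N$ or on $\mathcal I$ is needed here.
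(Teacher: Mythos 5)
Your proof is correct and follows essentially the same route as the paper's: both present $\mathcal M_1+\mathcal M_2$ as a quotient of $\mathcal M_1\oplus\mathcal M_2$, use the right-exactness of $\mathcal I\otimes_{\mathcal O_X}(-)$, and deduce the factorization from the hypothesis on each summand. The only (cosmetic) difference is that the paper descends the map $\mathcal I\otimes(\mathcal M_1\oplus\mathcal M_2)\longrightarrow\mathcal M$ along the cokernel presentation of the sum, whereas you check that the composite into $\mathcal N/\mathcal M$ vanishes and invoke $\mathcal M=Ker(\mathcal N\longrightarrow\mathcal N/\mathcal M)$; these are equivalent diagram chases in the abelian category.
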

\begin{proof} We choose $Spec(A)=U\in ZarAff(X)$ and set:
\begin{equation}
I:=\mathcal I(U)\qquad M_1=\mathcal M_1(U) \qquad M_2:=\mathcal M_2(U) \qquad
M:=\mathcal M(U) \qquad N:=\mathcal N(U)
\end{equation} Then, the morphisms $I\otimes_A M_1\longrightarrow N$, $I\otimes_A M_2\longrightarrow N$ factor through
$M$. We will show that $I\otimes_A (M_1+M_2)\longrightarrow N$ factors through $M$. We now consider the composition
\begin{equation}\label{4.15su}
Ker(I\otimes_A (M_1\oplus M_2)\longrightarrow I\otimes_A N)\longrightarrow I\otimes_A (M_1\oplus M_2)\longrightarrow M\longrightarrow N
\end{equation} Since the composition $I\otimes_A (M_1\oplus M_2)\longrightarrow M\longrightarrow N$ coincides
with $I\otimes_A (M_1\oplus M_2)\longrightarrow I\otimes_A N\longrightarrow N$, the composition in \eqref{4.15su} is $0$. 
Since $M\longrightarrow N$ is a monomorphism, this implies that
\begin{equation}\label{4.16su}
Ker(I\otimes_A (M_1\oplus M_2)\longrightarrow I\otimes_A N)\longrightarrow I\otimes_A (M_1\oplus M_2)\longrightarrow M 
\end{equation} is $0$. Composing with the canonical morphism $I\otimes_A Ker((M_1\oplus M_2)\longrightarrow N)
\longrightarrow Ker(I\otimes_A (M_1\oplus M_2)\longrightarrow I\otimes_A N)$, we see that
\begin{equation}\label{4.17su}
I\otimes_A Ker((M_1\oplus M_2)\longrightarrow N)\longrightarrow I\otimes_A (M_1\oplus M_2)\longrightarrow M
\end{equation} is $0$. Then, \eqref{4.17su} implies that there is a natural morphism
\begin{equation}
\begin{array}{l}
I\otimes_A (M_1+M_2)\cong I\otimes_A Coker(Ker((M_1\oplus M_2)\longrightarrow N)\longrightarrow M_1\oplus M_2)\\
\qquad\qquad \overset{\cong}{\longrightarrow}
Coker(I\otimes_A Ker((M_1\oplus M_2)\longrightarrow N)\longrightarrow I\otimes_A (M_1\oplus M_2))\longrightarrow M \\
\end{array}
\end{equation}
\end{proof}

\medskip
\begin{thm}\label{P4.10x} Let $X$ be a Noetherian, quasi-compact and semi-separated scheme over $(\mathcal C,\otimes,1)$ and let 
 $K$ be a field object. 
Let $F$ be  a cocontinuous symmetric monoidal functor from
$QCoh(X)$ to $K-Mod$.  Let  $U\in ZarAff(X)$ be such that for any  quasi-coherent submodule
$\mathcal I$ of $\mathcal O_X$ with $\mathcal I|_U=\mathcal O_X|_U$,  $F$ maps the monomorphism 
$\mathcal I\longrightarrow \mathcal O_X$ to an isomorphism. Then, if $g:\mathcal M\longrightarrow 
\mathcal N$ is a monomorphism in $QCoh(X)$ such that $g|_U:\mathcal M|_U
\longrightarrow \mathcal N|_U$ is an isomorphism, the functor $F$ maps
$g$ to an isomorphism $F(g):F(\mathcal M)\overset{\cong}{\longrightarrow} F(\mathcal N)$. 
\end{thm}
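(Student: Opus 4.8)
The plan is to reduce to the case where $\mathcal N$ is finitely generated, build the ``conductor'' $(\mathcal M:\mathcal N)\hookrightarrow\mathcal O_X$ supplied by Proposition~\ref{A380}, and use it to write down an explicit two-sided inverse of $F(g)$. For the reduction: using Theorem~\ref{Thm3.8} I would write $\mathcal N=\varinjlim_{\lambda}\mathcal N_\lambda$ as the filtered colimit of its finitely generated quasi-coherent submodules and set $\mathcal M_\lambda:=\mathcal M\times_{\mathcal N}\mathcal N_\lambda$. Since $QCoh(X)$ is abelian with filtered colimits commuting with finite limits (checked on affine sections), one gets $\mathcal M=\varinjlim_\lambda\mathcal M_\lambda$, each $g_\lambda\colon\mathcal M_\lambda\hookrightarrow\mathcal N_\lambda$ is a monomorphism, and — restriction to the affine $U$ being exact — $\mathcal M_\lambda|_U=\mathcal M|_U\cap\mathcal N_\lambda|_U=\mathcal N_\lambda|_U$, so $g_\lambda|_U$ is an isomorphism. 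Because $F$ is cocontinuous, $F(g)=\varinjlim_\lambda F(g_\lambda)$, and a filtered colimit of isomorphisms is an isomorphism; hence it suffices to treat each $g_\lambda$, so I may assume $\mathcal N$ is finitely generated.

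Next, the conductor. Applying Proposition~\ref{A380} with ambient sheaf $\mathcal N$ and submodules $\mathcal N\supseteq\mathcal M$ produces a quasi-coherent submodule $\mathcal I:=(\mathcal M:\mathcal N)$ of $\mathcal O_X$ lying in $[\mathcal M:\mathcal N]$; thus the canonical map $\mathcal I\otimes_{\mathcal O_X}\mathcal N\to\mathcal O_X\otimes_{\mathcal O_X}\mathcal N\cong\mathcal N$ factors as $g\circ c$ for some $c\colon\mathcal I\otimes_{\mathcal O_X}\mathcal N\to\mathcal M$. I then check $\mathcal I|_U=\mathcal O_X|_U$: for $V\in ZarAff(U)$ one has $(\mathcal N/\mathcal M)(V)=\mathcal N(V)/\mathcal M(V)=0$ since $\mathcal M|_U=\mathcal N|_U$, so the internal hom in the kernel \eqref{4.3gh} defining $\mathcal I(V)$ vanishes and $\mathcal I(V)=\mathcal O_X(V)$. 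By the hypothesis on $U$, $F$ sends $\mathcal I\hookrightarrow\mathcal O_X$ to an isomorphism $\iota\colon F(\mathcal I)\xrightarrow{\ \cong\ }F(\mathcal O_X)=K$.

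Finally, I would assemble the inverse. Let $\mu_{\mathcal P}\colon\mathcal I\otimes_{\mathcal O_X}\mathcal P\to\mathcal P$ denote $(\mathcal I\hookrightarrow\mathcal O_X)\otimes\mathrm{id}_{\mathcal P}$ followed by $\mathcal O_X\otimes_{\mathcal O_X}\mathcal P\cong\mathcal P$. One has $g\circ c=\mu_{\mathcal N}$ by construction; and naturality gives $g\circ\mu_{\mathcal M}=\mu_{\mathcal N}\circ(\mathrm{id}_{\mathcal I}\otimes g)$, whence, $g$ being a monomorphism in $QCoh(X)$, also $c\circ(\mathrm{id}_{\mathcal I}\otimes g)=\mu_{\mathcal M}$. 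Applying the cocontinuous symmetric monoidal functor $F$ and using that $\iota$ is an isomorphism, the maps $a_{\mathcal P}:=F(\mu_{\mathcal P})\colon F(\mathcal I)\otimes_K F(\mathcal P)\to F(\mathcal P)$ (for $\mathcal P=\mathcal M,\mathcal N$) are isomorphisms, and $F(g)\circ a_{\mathcal M}=a_{\mathcal N}\circ(\mathrm{id}\otimes F(g))$, $F(g)\circ F(c)=a_{\mathcal N}$, $F(c)\circ(\mathrm{id}\otimes F(g))=a_{\mathcal M}$. Setting $\sigma:=F(c)\circ a_{\mathcal N}^{-1}\colon F(\mathcal N)\to F(\mathcal M)$ yields $F(g)\circ\sigma=\mathrm{id}$ and $\sigma\circ F(g)=F(c)\circ(\mathrm{id}\otimes F(g))\circ a_{\mathcal M}^{-1}=a_{\mathcal M}\circ a_{\mathcal M}^{-1}=\mathrm{id}$, so $F(g)$ is an isomorphism.

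The main obstacle I expect is keeping the bookkeeping honest: making sure the reduction step genuinely commutes with both the colimit and with $i^*$, and — in the last step — that $a_{\mathcal M},a_{\mathcal N}$ really are natural isomorphisms and that the three displayed identities survive applying $F$. The one genuinely non-formal input is the identity $c\circ(\mathrm{id}_{\mathcal I}\otimes g)=\mu_{\mathcal M}$, which uses that $g$ is a monomorphism in $QCoh(X)$ — a property $F$ need not preserve, so it must be exploited \emph{before} passing to $K$-Mod.
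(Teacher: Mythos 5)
Your proof is correct, and it rests on the same two pillars as the paper's argument: the conductor $(\mathcal M:\mathcal N_\bullet)$ supplied by Proposition \ref{A380}, which restricts to $\mathcal O_X|_U$ because $\mathcal N/\mathcal M$ vanishes on $U$ and is therefore sent by $F$ to an isomorphism onto $K$, together with the factorization of $\mathcal I\otimes_{\mathcal O_X}\mathcal N\to\mathcal N$ through $\mathcal M$. The implementation, however, differs in two genuine ways. First, you reduce to $\mathcal N$ finitely generated by intersecting, setting $\mathcal M_\lambda=\mathcal M\times_{\mathcal N}\mathcal N_\lambda$; the paper instead keeps $\mathcal M$ fixed and writes $\mathcal N=\varinjlim_i(\mathcal M+\mathcal N_i)$. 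Since $\mathcal M+\mathcal N_i$ is not finitely generated, the paper cannot form its conductor directly and must invoke Lemma \ref{Lem4.08} to transport the factorization from $\mathcal I_i\otimes\mathcal N_i$ to $\mathcal I_i\otimes(\mathcal M+\mathcal N_i)$; your intersection-based reduction makes that lemma unnecessary, at the price of having to justify that $\varinjlim_\lambda(\mathcal M\cap\mathcal N_\lambda)=\mathcal M$ and that each $\mathcal M\cap\mathcal N_\lambda$ is quasi-coherent with $\mathcal M_\lambda|_U=\mathcal N_\lambda|_U$ --- all of which follow from the standing assumption that filtered colimits commute with finite limits together with flatness of restrictions, so this is a real but fillable bookkeeping obligation. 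Second, where the paper deduces from its two commutative squares \eqref{4.15a380} and \eqref{4.16a380} that $F(\mathcal M)\to F(\mathcal M+\mathcal N_i)$ is simultaneously a (split) monomorphism and a (split) epimorphism, you write down the two-sided inverse $\sigma=F(c)\circ a_{\mathcal N}^{-1}$ explicitly; these are essentially the same computation packaged differently, and your observation that the identity $c\circ(\mathrm{id}_{\mathcal I}\otimes g)=\mu_{\mathcal M}$ must be extracted \emph{before} applying $F$ (because $F$ need not preserve monomorphisms) is exactly the point that makes the argument work in both versions.
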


\begin{proof} From Theorem \ref{Thm3.8}, it follows that $\mathcal N$ can be expressed as a 
filtered direct limit of its finitely generated quasi-coherent submodules $\{\mathcal N_i\}_{i\in I}$. 
 Since each $\mathcal N_i$ is finitely generated, it follows from 
Proposition \ref{A380}(a) that we can consider  
the quasi-coherent submodule $\mathcal I_i:=(\mathcal M:\mathcal N_i)$ of $\mathcal O_X$ for each $i$. Further, for any quasi-coherent submodule $\mathcal I$ of $\mathcal O_X$, it  follows from Lemma \ref{Lem4.08}
that the morphism
$\mathcal I\otimes_{\mathcal O_X}\mathcal N_i\longrightarrow \mathcal N$ factors through $\mathcal M$ if and only if
$\mathcal I\otimes_{\mathcal O_X} (\mathcal M+\mathcal N_i)\longrightarrow \mathcal N$ also factors through $\mathcal M$. In particular, this means
that $\mathcal I_i\otimes_{\mathcal O_X}(\mathcal M+\mathcal N_i)\longrightarrow \mathcal N$ factors through
$\mathcal M$.  

\medskip  For the sake of convenience, 
we set $\mathcal N_i':=\mathcal M+\mathcal N_i$. It is clear that $\underset{i\in I}{\varinjlim}\textrm{ }
\mathcal N_i'=\mathcal N$. Further, since $g|_U:\mathcal M|_U\longrightarrow \mathcal N|_U$ is an isomorphism, 
we have $\mathcal I_i|_U=(\mathcal M|_U:\mathcal N_i|_U)=\mathcal O_X|_U$.  Hence $F(\mathcal I_i)\cong 
F(\mathcal O_X)=K\in K-Mod$. We have noted before that  $\mathcal I_i\otimes_{\mathcal O_X}
\mathcal N_i'=\mathcal I_i\otimes_{\mathcal O_X}(\mathcal M+\mathcal N_i)\longrightarrow \mathcal N$ factors through
$\mathcal M$. Since $F$ is a
symmetric monoidal functor, we now have the following commutative diagrams: 
\begin{equation}\label{4.15a380}
\begin{CD}
\mathcal I_i\otimes_{\mathcal O_X} \mathcal M @>>> \mathcal O_X\otimes_{\mathcal O_X} \mathcal M \\
@VVV @V\cong VV \\
\mathcal I_i\otimes_{\mathcal O_X} \mathcal N_i' @>>> \mathcal M \\
\end{CD} \qquad \Rightarrow \qquad
\begin{CD}
F(\mathcal I_i)\otimes_K F(\mathcal M) 
\cong F(\mathcal M) @>\cong >>  F(\mathcal M) \\
@VVV @Vid VV \\
F(\mathcal I_i)\otimes_KF(\mathcal N_i')\cong F(\mathcal N_i') @>>> F(\mathcal M) \\
\end{CD}
\end{equation} From the right hand side diagram in \eqref{4.15a380}, it follows that
$F(\mathcal M)\longrightarrow F(\mathcal N_i')$ is a monomorphism for each $i\in I$. On the other
hand, we also have:
\begin{equation}\label{4.16a380}
\begin{CD}
\mathcal I_i\otimes_{\mathcal O_X}\mathcal N_i' @>>> \mathcal O_X\otimes_{\mathcal O_X} \mathcal N_i'\\
@VVV @V\cong VV\\
\mathcal M @>>> \mathcal N_i' \\
\end{CD} \qquad\Rightarrow\qquad
\begin{CD}
F(\mathcal I_i)\otimes_KF(\mathcal N_i')\cong F(\mathcal N_i') @>\cong >> F(\mathcal N_i') \\
@VVV @Vid VV\\
F(\mathcal M)@>>> F(\mathcal N_i') \\
\end{CD}
\end{equation} From the right hand side diagram in \eqref{4.16a380}, it follows that
$F(\mathcal M)\longrightarrow F(\mathcal N_i')$ is also an epimorphism for each $i\in I$. Consequently,
we have an isomorphism $F(\mathcal M)\overset{\cong}{\longrightarrow} F(\mathcal N_i')$  for each $i\in I$. 
Since $F$ preserves colimits, it now follows that $F(\mathcal M)\cong \underset{i\in I}{\varinjlim}\textrm{ }
F(\mathcal N_i')\cong F( \underset{i\in I}{\varinjlim}\textrm{ }\mathcal N_i')\cong F(\mathcal N)$. 

\end{proof}

\medskip
\begin{thm}\label{P5.12wvo} Let $X$ be a quasi-compact, semi-separated and Noetherian scheme over $(\mathcal C,\otimes,1)$. 
Let $K$ be a field object of $(\mathcal C,\otimes,1)$ and $F:QCoh(X)\longrightarrow K-Mod$ be a cocontinuous
symmetric monoidal functor. Then, there exists a Zariski open immersion $i:U\longrightarrow X$ with $U$ affine such that
$F$ is $i$-local.
\end{thm}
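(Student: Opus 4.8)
The plan is to reduce the assertion to the results already established in this section via the abstract criterion of Proposition \ref{P4.6fp}(a). Fix a Zariski open immersion $i\colon U\longrightarrow X$ with $U$ affine. Since $X$ is semi-separated, $i$ is quasi-compact and semi-separated, so by Proposition \ref{P2.5qp} the counit $i^*i_*\longrightarrow \mathrm{id}_{QCoh(U)}$ is an isomorphism; hence the hypotheses of Definition \ref{Def4.4} hold with $E=QCoh(X)$, $D=QCoh(U)$ and $C=K-Mod$, and Proposition \ref{P4.6fp}(a) applies. Thus it suffices to exhibit one such $U$ for which $F(f)$ is an isomorphism in $K-Mod$ for every morphism $f$ of $QCoh(X)$ with $i^*(f)=f|_U$ an isomorphism in $QCoh(U)$.

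First I would take $U\in ZarAff(X)$ as produced by Theorem \ref{thm4.6c}, so that $F$ sends every monomorphism $\mathcal I\longrightarrow \mathcal O_X$ with $\mathcal I|_U=\mathcal O_X|_U$ to an isomorphism. By Theorem \ref{P4.10x}, this same $U$ has the stronger property that $F(g)$ is an isomorphism for \emph{every} monomorphism $g\colon \mathcal M\longrightarrow \mathcal N$ in $QCoh(X)$ whose restriction $g|_U$ is an isomorphism. It remains to drop the word ``monomorphism''.

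So let $f\colon \mathcal M\longrightarrow \mathcal N$ in $QCoh(X)$ satisfy that $f|_U$ is an isomorphism, and factor $f$ in the abelian category $QCoh(X)$ as $\mathcal M\overset{p}{\longrightarrow}\mathcal L\overset{m}{\longrightarrow}\mathcal N$, where $p$ is an epimorphism, $m$ is a monomorphism and $\mathcal L=\mathrm{im}(f)$. Because kernels and cokernels in $QCoh$ are computed objectwise over $ZarAff(X)$ and $ZarAff(U)\subseteq ZarAff(X)$, restriction to $U$ is exact, so $f|_U=m|_U\circ p|_U$ is again an image factorization and both $p|_U$ and $m|_U$ are isomorphisms. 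The monomorphism $m$ is then handled directly by Theorem \ref{P4.10x}, giving that $F(m)$ is an isomorphism. For $p$, put $\mathcal K:=\ker(p)\hookrightarrow\mathcal M$; computing kernels objectwise gives $\mathcal K|_U=\ker(p|_U)=0$, so the monomorphism $0\longrightarrow\mathcal K$ restricts to an isomorphism over $U$, whence Theorem \ref{P4.10x} yields $F(\mathcal K)\cong F(0)=0$ (the cocontinuous functor $F$ preserves the zero object). Since $p$ presents $\mathcal L$ as the cokernel of $\mathcal K\hookrightarrow\mathcal M$ and $F$ preserves cokernels, $F(p)\colon F(\mathcal M)\longrightarrow \mathrm{coker}\bigl(0\longrightarrow F(\mathcal M)\bigr)=F(\mathcal L)$ is an isomorphism. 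Therefore $F(f)=F(m)\circ F(p)$ is an isomorphism, and by the criterion of the first paragraph $F$ is $i$-local.

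The only subtle point is the passage from monomorphisms to arbitrary morphisms: one must know that restriction to $U$ commutes with the image factorization (so that $p|_U$ and $m|_U$ are isomorphisms) and that $F$ annihilates $\ker(p)$. Both follow from the objectwise description of (co)kernels in $QCoh$ together with the cocontinuity of $F$; all the remaining force of the argument is imported wholesale from Theorems \ref{thm4.6c} and \ref{P4.10x} and Proposition \ref{P4.6fp}.
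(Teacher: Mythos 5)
Your proposal is correct and follows the same overall strategy as the paper: choose $U$ via Theorem \ref{thm4.6c}, invoke Proposition \ref{P4.10x} for monomorphisms that restrict to isomorphisms over $U$, reduce the general case by the epi--mono factorization, and conclude with the criterion of Proposition \ref{P4.6fp}(a). The only point of divergence is the treatment of the epimorphism $p$. The paper forms the kernel pair $\mathcal M'=\lim(\mathcal M\overset{g}{\longrightarrow}\mathcal N\overset{g}{\longleftarrow}\mathcal M)$, uses the diagonal monomorphism $\mathcal M\longrightarrow\mathcal M'$ (which restricts to an isomorphism over $U$) to see that $F$ inverts both projections, and then exploits the fact that for an epimorphism in an abelian category the cartesian square is also cocartesian, so cocontinuity of $F$ forces $F(g)$ to be an isomorphism. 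You instead take $\mathcal K=\ker(p)$, note $\mathcal K|_U=0$ by exactness of restriction, apply Proposition \ref{P4.10x} to the monomorphism $0\longrightarrow\mathcal K$ to get $F(\mathcal K)=0$, and use that $F$ preserves cokernels. Both arguments are sound and rest on the same two inputs (Proposition \ref{P4.10x} plus cocontinuity); yours is marginally more direct, while the paper's kernel-pair version avoids any explicit appeal to the objectwise computation of kernels, which you do need in order to assert that $\ker(p)|_U=\ker(p|_U)$ (this is justified by the flatness of $\mathcal O_X(W)$ over $\mathcal O_X(V)$ for $W\in ZarAff(V)$, as used in Proposition \ref{P2.6}(b)).
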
 

\begin{proof} Using Proposition \ref{thm4.6c}, we can choose  $i:U\longrightarrow X$ in $ZarAff(X)$ such that if $\mathcal I$ is a quasi-coherent submodule
of $\mathcal O_X$ with $\mathcal I|_U=\mathcal O_X|_U$, then $F$ maps the monomorphism 
$\mathcal I\longrightarrow \mathcal O_X$ to an isomorphism. We now consider a morphism $g:\mathcal M\longrightarrow \mathcal N$ in 
$QCoh(X)$ such that $g|_U:\mathcal M|_U\longrightarrow \mathcal N|_U$ is an isomorphism. Then, in particular, if $g:\mathcal M
\longrightarrow \mathcal N$ is a monomorphism, $F(g):F(\mathcal M)\longrightarrow F(\mathcal N)$ is an isomorphism as shown
in Proposition \ref{P4.10x}

\medskip
Alternatively, suppose that $g:\mathcal M\longrightarrow \mathcal N$ is an epimorphism such that $g|_U$ is an isomorphism. 
We now consider the commutative diagram
\begin{equation}
 \begin{tikzpicture}[baseline=(current  bounding  box.center)]
      \matrix[matrix of math nodes,column sep=5pc,row sep=3em]
      {
                        & |(A)| \mathcal M      &         \\
        |(C)| \mathcal M & |(D)| \mathcal M':=lim(\mathcal M\overset{g}{\longrightarrow}\mathcal N\overset{g}{\longleftarrow}
\mathcal M)   & |(E)| \mathcal M  \\
      };
      \begin{scope}[->]
             \draw (A) to node {$id\qquad\qquad$}  (C);
         \draw  (D) edge node [below] {$p_1$} (C);
         \draw (A) edge node [right] {$i$} (D);
         \draw (D) edge node [below] {$p_2$} (E);
         \draw (A) to node {$\qquad\qquad id$} (E);
      \end{scope}
   \end{tikzpicture}
\end{equation} where  $p_1$, $p_2$ are the two canonical morphisms from the limit $
\mathcal M'$  to $\mathcal M$. Since $p_1\circ i=p_2\circ i=id$, $i:\mathcal M\longrightarrow \mathcal M'$ is a
monomorphism. When restricted to $U$, $g$ becomes an isomorphism and hence the restriction $i|_U$ 
is an isomorphism. From Proposition \ref{P4.10x}, it follows that $F(i):F(\mathcal M)
\longrightarrow F(\mathcal M')$  
is actually an isomorphism. Then, since $p_1\circ i=p_2\circ i=id$, $F(p_1)$ and 
$F(p_2)$ are isomorphisms. We now consider the commutative diagrams:
\begin{equation}\label{4.24e}
\begin{CD}
\mathcal M' @>p_2>> \mathcal M \\
@Vp_1VV @VgVV \\
\mathcal M @>g>> \mathcal N \\
\end{CD} \qquad\Rightarrow \qquad 
\begin{CD}
F(\mathcal M)\cong  F(\mathcal M' )@>F(p_2)>> F(\mathcal M) \\
@VF(p_1)VV @VF(g)VV \\
F(\mathcal M) @>F(g)>> F(\mathcal N) \\
\end{CD}
\end{equation} Since $g$ is an epimorphism in the abelian category $QCoh(X)$, the cartesian
square on the left is also cocartesian. Then, $F$ being cocontinuous, the right hand square 
in \eqref{4.24e} is also cocartesian and hence $F(g)$ is an isomorphism. 

\medskip
Finally, since any morphism $g:\mathcal M\longrightarrow \mathcal N$ can be factorized
as an epimorphism followed by a monomorphism, it follows from the above that
if $g|_U$ is an isomorphism, $F(g)$ is an isomorphism in $K-Mod$. From the result recalled in Proposition \ref{P4.6fp}(a), 
  it follows that $F:QCoh(X)\longrightarrow K-Mod$ is $i$-local. 

\end{proof} 

\medskip
\begin{Thm}\label{P5.13wvo} Let $X$ be a quasi-compact, semi-separated and Noetherian scheme over $(\mathcal C,\otimes,1)$. 
Let $K$ be a field object of $(\mathcal C,\otimes,1)$ and $F:QCoh(X)\longrightarrow K-Mod$ be a cocontinuous
symmetric monoidal functor that is also normal. Then, there exists a morphism $f:Spec(K)\longrightarrow X$  such that 
$F\cong f^*$. 
\end{Thm}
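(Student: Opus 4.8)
The plan is to reduce the statement to the affine case that has already been settled in Proposition \ref{TYpcf}. First I would apply Proposition \ref{P5.12wvo}: since $F$ is cocontinuous and symmetric monoidal, there is a Zariski open immersion $i\colon U=Spec(B)\longrightarrow X$ with $U$ affine such that $F$ is $i$-local. Because $X$ is Noetherian and $U\in ZarAff(X)$, the monoid $B$ is itself Noetherian by Definition \ref{Def4.1}. Since $F$ is normal in the sense of Definition \ref{NDef} and is $i$-local for this particular $U$, there is a normal functor $(G^U,\tau^U)\colon B\text{-}Mod\longrightarrow K\text{-}Mod$ with $F\cong G^U\circ i^*$. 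The goal is then to produce a morphism $\phi\colon B\longrightarrow K$ in $Comm(\mathcal C)$ such that $G^U$ is extension of scalars along $\phi$; then $f:=i\circ Spec(\phi)\colon Spec(K)\longrightarrow X$ will satisfy $f^*\cong G^U\circ i^*\cong F$.

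The first real step is to verify that $G^U$ is cocontinuous and symmetric monoidal, which is exactly what is needed to invoke Proposition \ref{TYpcf}. Recall that $i^*\colon QCoh(X)\longrightarrow QCoh(U)$ is cocontinuous and strong symmetric monoidal, and by Proposition \ref{P2.5qp} the counit $i^*i_*\longrightarrow \mathrm{id}$ is an isomorphism, so the hypotheses of Definition \ref{Def4.4} hold with $D=QCoh(U)$, $E=QCoh(X)$, $C=K\text{-}Mod$. Using the equivalence of Proposition \ref{P4.6fp}(b) (whose quasi-inverse is precomposition with $i_*$) together with $i^*i_*\cong\mathrm{id}$, the relation $G^U\circ i^*\cong F\cong (F\circ i_*)\circ i^*$ forces $G^U\cong F\circ i_*$; and since $F$ lies in $Fun_{c\otimes}(QCoh(X),K\text{-}Mod)$ and is $i$-local, its image $F\circ i_*$ under the quasi-inverse lies in $Fun_{c\otimes}(QCoh(U),K\text{-}Mod)$. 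Hence $G^U$ is cocontinuous and symmetric monoidal.

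Now Proposition \ref{TYpcf} applies directly: $B$ is Noetherian and $(G^U,\tau^U)\colon B\text{-}Mod\longrightarrow K\text{-}Mod$ is a normal, cocontinuous, symmetric monoidal functor, so there is $\phi\colon B\longrightarrow K$ in $Comm(\mathcal C)$ with $G^U(M)\cong K\otimes_B M$ naturally in $M$. Set $g:=Spec(\phi)\colon Spec(K)\longrightarrow U$ and $f:=i\circ g\colon Spec(K)\longrightarrow X$. By the description of the pullback in Proposition \ref{P2.4}(a) for affine schemes, $g^*\colon QCoh(U)=B\text{-}Mod\longrightarrow K\text{-}Mod$ is exactly the extension of scalars $M\mapsto K\otimes_B M$, so $g^*\cong G^U$; and $f^*\cong g^*\circ i^*$ by the functoriality of the pullback under composition (see \cite{AB2}). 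Therefore $f^*\cong G^U\circ i^*\cong F$, which proves the theorem. (The converse assertion in Theorem \ref{finres}, that a pullback $f^*$ along $f\colon Spec(K)\longrightarrow X$ is cocontinuous, symmetric monoidal and normal, I would handle separately: cocontinuity and the symmetric monoidal property are Proposition \ref{P2.4}(a), and normality is checked by exhibiting, for each affine $U=Spec(B)$ through which $f$ factors, the associated normal functor given by extension of scalars.)

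The step I expect to be the main obstacle is the second one: showing that the abstractly postulated functor $G^U$ from Definition \ref{NDef} is genuinely cocontinuous and symmetric monoidal, i.e.\ threading the $i$-locality equivalence of Proposition \ref{P4.6fp} cleanly so as to identify $G^U$ with $F\circ i_*$ inside $Fun_{c\otimes}$. Once that identification is secured, the appeal to Proposition \ref{TYpcf} and the unwinding of $f^*$ along $i\circ Spec(\phi)$ are routine.
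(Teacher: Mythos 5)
Your proposal is correct and follows essentially the same route as the paper's own proof: invoke Proposition \ref{P5.12wvo} to obtain an affine $U$ for which $F$ is $i$-local, use normality to extract $G^U$ with $F\cong G^U\circ i^*$, identify $G^U\cong F\circ i_*$ via the counit isomorphism and Proposition \ref{P4.6fp}(b) so that $G^U$ is cocontinuous and symmetric monoidal, and then apply Proposition \ref{TYpcf} to realize $G^U$ as extension of scalars along some $B\longrightarrow K$. The step you flag as the main obstacle is handled in the paper exactly as you propose.
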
 

\begin{proof} From Proposition \ref{P5.12wvo}, we know that there exists $(i:U=Spec(A)\longrightarrow X)\in ZarAff(X)$ such that
$F$ is $i$-local. Since $F$ is also normal, it follows from
Definition \ref{NDef} that there is a normal functor $(G^U,\tau^U):A-Mod\longrightarrow K-Mod$ with 
$F\cong G^U\circ i^*$. Then $G^U\cong G^U\circ i^*\circ i_*\cong F\circ i_*$. From Proposition \ref{P4.6fp}(b),
we know that  
\begin{equation}
Fun_{c\otimes}(QCoh(U),K-Mod) \doublerightleftarrow{\circ i^*}{\circ i_*} \{F'\in Fun_{c\otimes}(QCoh(X),K-Mod), \textrm{ }\mbox{$i$-local}\}
\end{equation}  is an equivalence of categories. Therefore, we have
 $G^U\in Fun_{c\otimes}(QCoh(U),K-Mod)$.  Further, since $A$ is Noetherian, it follows from Proposition \ref{TYpcf} that there exists a morphism
$g:A\longrightarrow K$ in $Comm(\mathcal C)$ such that $G^U(M)=M\otimes_AK$ for any 
$M\in A-Mod$. If we continue to denote by $g$ the opposite morphism 
$g:Spec(K)\longrightarrow Spec(A)$, we see that $G^U=g^*:QCoh(Spec(A))=A-Mod\longrightarrow K-Mod=QCoh(Spec(K))$.  Hence, $F\cong G^U\circ i^*=g^*\circ i^*\cong (i\circ g)^*$. 

\end{proof}

\medskip
Conversely, given a morphism $f:Spec(K)\longrightarrow X$, it is clear that the pullback functor $f^*:QCoh(X)
\longrightarrow K-Mod$ is cocontinuous and symmetric monoidal. We conclude by showing that the functor $f^*$ is also normal.

\medskip
\begin{Thm}\label{finres} Let $X$ be a quasi-compact, semi-separated and Noetherian scheme over $(\mathcal C,\otimes,1)$. 
Let $K$ be a field object of $(\mathcal C,\otimes,1)$ and let $f:Spec(K)\longrightarrow X$ be a morphism of schemes.
Then, the functor $f^*:QCoh(X)\longrightarrow K-Mod$ is a cocontinuous, symmetric monoidal and normal functor.
\end{Thm}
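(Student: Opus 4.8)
The plan is to treat the three properties separately, the first two being essentially immediate. That $f^*$ is cocontinuous and symmetric monoidal is already recorded after Proposition \ref{P2.4}: $f^*$ is a left adjoint, hence preserves all colimits, and it is a strong tensor functor by Proposition \ref{P2.4}(a). So the work is to establish normality in the sense of Definition \ref{NDef}. The key point I would prove is that whenever $f^*$ is $i$-local for some $(i\colon U=Spec(B)\longrightarrow X)\in ZarAff(X)$, the morphism $f$ necessarily factors through $i$; once this is known, the required normal functor $(G^U,\tau^U)$ will just be extension of scalars along the resulting morphism $B\longrightarrow K$.

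First I would produce an affine open of $X$ through which $f$ factors. Fixing a finite affine cover $\{U_1,\dots,U_n\}$ of $X$ and pulling it back along $f$ yields a jointly epimorphic family $\{Spec(K)\times_X U_\ell\longrightarrow Spec(K)\}$ of Zariski open immersions. By Proposition \ref{P4.6dj} each $Spec(K)\times_X U_\ell$ is either $Spec(0)$ or isomorphic to $Spec(K)$, and since $Spec(K)\neq Spec(0)$ they cannot all be empty; hence for some index $\ell_0$ the projection $Spec(K)\times_X U_{\ell_0}\longrightarrow Spec(K)$ is an isomorphism, which by the defining pullback square means $f=\iota\circ g'$ for the inclusion $\iota\colon U':=U_{\ell_0}=Spec(B')\longrightarrow X$ and a morphism $g'\colon Spec(K)\longrightarrow U'$, equivalently a morphism $B'\longrightarrow K$ in $Comm(\mathcal C)$ still denoted $g'$.

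Next I would compute $f^*i_*i^*\mathcal O_X$. Since $i^*$ is symmetric monoidal, $i^*\mathcal O_X=\mathcal O_U$, and since $X$ is semi-separated every fibre product $W\times_X U$ with $W\in ZarAff(U')$ is affine; feeding the corresponding one-element affine covers into the limit formula \eqref{Eq2.8rt} for $i_*$ shows that $(i_*\mathcal O_U)|_{U'}$ is the quasi-coherent sheaf on $U'$ associated to the $B'$-module $C:=\mathcal O_U(U\times_X U')$. Therefore $f^*i_*i^*\mathcal O_X\cong (g')^*\big((i_*\mathcal O_U)|_{U'}\big)\cong K\otimes_{B'}C$, and pasting pullback squares gives $Spec(K\otimes_{B'}C)\cong Spec(K)\times_{U'}(U\times_X U')\cong Spec(K)\times_X U$. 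If $f^*$ is $i$-local, evaluating the unit at $\mathcal O_X$ yields an isomorphism $K=f^*\mathcal O_X\overset{\cong}{\longrightarrow}f^*i_*i^*\mathcal O_X\cong K\otimes_{B'}C$, so $K\otimes_{B'}C\neq 0$ and hence $Spec(K)\times_X U\neq Spec(0)$; by Proposition \ref{P4.6dj} applied to $Spec(K)\times_X U\longrightarrow Spec(K)$ this forces $Spec(K)\times_X U\cong Spec(K)$, and as before $f=i\circ g$ for some $g\colon Spec(K)\longrightarrow U=Spec(B)$, i.e. a morphism $g\colon B\longrightarrow K$ in $Comm(\mathcal C)$.

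Finally, $f\cong i\circ g$ gives $f^*\cong g^*\circ i^*$ by functoriality of pullbacks, so I would set $G^U:=g^*\colon QCoh(U)=B-Mod\longrightarrow K-Mod$, which is the extension-of-scalars functor $M\mapsto K\otimes_B M$, together with the canonical structure maps $\tau^U_{M,N}\colon\underline{Hom}_B(M,N)\longrightarrow\underline{Hom}_K(K\otimes_B M,K\otimes_B N)$. Checking that $(G^U,\tau^U)$ satisfies conditions (1)--(3) of Definition \ref{Normfunc} is routine --- these are exactly the compatibilities of the canonical maps with naturality, with internal composition and units, and with the factorizations through the internal homs --- and is the motivating example of a normal functor in Vitale \cite[$\S$4]{Vitale}; since $f^*\cong G^U\circ i^*$, Definition \ref{NDef} is verified. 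I expect the only real subtlety to be the identification of $(i_*\mathcal O_U)|_{U'}$ in the third paragraph: this is where semi-separatedness of $X$ is used, to make all the relevant fibre products affine so that the limit defining $i_*$ collapses; alternatively one could invoke a general flat base-change statement for quasi-coherent pushforward along Zariski open immersions, proved just as Proposition \ref{P2.5qp} by reducing to \cite[Corollaire 2.11]{TV}.
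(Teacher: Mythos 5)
Your proposal is correct and follows essentially the same route as the paper: both arguments reduce normality to showing that $i$-locality of $f^*$ forces the fibre product $Spec(K)\times_XU$ to be non-empty (via $f^*i_*\mathcal O_U\cong f^*\mathcal O_X\cong K\neq 0$, computed through an affine chart containing the image of $f$), then invoke Proposition \ref{P4.6dj} to make $Spec(K)\times_XU\longrightarrow Spec(K)$ an isomorphism, factor $f=i\circ g$, and take $G^U=g^*$ with its canonical normal structure. The only difference is presentational --- the paper phrases the non-emptiness step as a contradiction in a two-case analysis, while you run the same computation directly.
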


\begin{proof}  We have mentioned before that $f^*$ is cocontinuous and symmetric monoidal. Let $(i:U=Spec(A)
\longrightarrow X)\in ZarAff(X)$ be such that $f^*$ is $U$-local. We now consider the fiber square:
\begin{equation}\label{5.26eq}
\begin{CD}
Y @>g>> U=Spec(A) \\
@VhVV @ViVV \\
Spec(K) @>f>> X \\
\end{CD}
\end{equation} In \eqref{5.26eq}, $h:Y\longrightarrow Spec(K)$ is a Zariski open immersion. Since $K$ is a field
object, it follows from Proposition \ref{P4.6dj} that either $Y=Spec(0)$ or $h:Y\longrightarrow Spec(K)$ is 
an isomorphism. 

\medskip
\emph{Case 1: $Y=Spec(0)$:} Let $\mathcal O_X$ (resp. $\mathcal O_U$) be the structure sheaf of 
$X$ (resp. $U$) as defined in \eqref{2.3pje}.  Then, since $f^*$ is $U$-local and the morphism $\mathcal O_X\longrightarrow i_*\mathcal O_U$
is an isomorphism when restricted to $U$, $f^*(\mathcal O_X\longrightarrow i_*\mathcal O_U)$ must be an isomorphism 
in $K-Mod$. Then, $f^*i_*\mathcal O_U\cong f^*\mathcal O_X\cong K$.  We now note that
the family
\begin{equation}
\{W\times_XSpec(K)\longrightarrow Spec(K)| W\in ZarAff(X)\} 
\end{equation} is a covering of $Spec(K)$. Hence, we can choose $W\in ZarAff(X)$ such that $W\times_XSpec(K)\ne Spec(0)$. Then, since $K$ is a field object, we must have $Z:=W\times_XSpec(K)\overset{\cong}{\longrightarrow}Spec(K)$. 
Then, the pullback $f^*(i_*\mathcal O_U)$ of $i_*\mathcal O_U$ can be described as:
\begin{equation}\label{5.28EQ}
f^*(i_*\mathcal O_U)\cong (i_*\mathcal O_U)(W)\otimes_{\mathcal O_X(W)}K \cong \mathcal O_X(W\times_XU)\otimes_{\mathcal O_X(W)}K
\end{equation} On the other hand, we notice that:
\begin{equation}\label{5.29EQ}
\begin{array}{ll}
(W\times_XU)\times_WZ&=(W\times_XU)\times_W(W\times_XSpec(K))\\
&=(U\times_XSpec(K))\times_{Spec(K)}(W\times_X
Spec(K))\\ 
&=Spec(0)\times_{Spec(K)}Spec(K)=Spec(0)\\
\end{array}
\end{equation} Now since $Z=Spec(K)$, combining \eqref{5.28EQ} and \eqref{5.29EQ}, we see that 
\begin{equation}
f^*(i_*\mathcal O_U)\cong \mathcal O_X(W\times_XU)\otimes_{\mathcal O_X(W)}K=0
\end{equation} which contradicts the fact that $f^*i_*\mathcal O_U\cong K$. Hence, it is not possible to have
$Y=U\times_XSpec(K)=Spec(0)$.

\medskip
\emph{Case 2: $h:Y\longrightarrow Spec(K)$ is an isomorphism:} We may suppose that $h$ is actually
the identity. Then, from \eqref{5.26eq}, we see that $f=i\circ g$ and hence $f^*\cong g^*\circ i^*$. Since $g:Y=Spec(K)
\longrightarrow Spec(A)$ corresponds to a morphism in $Comm(\mathcal C)$, it is clear that $g$ induces a normal functor
$(g^*,\tau):A-Mod\longrightarrow K-Mod$.  Hence, the result follows.

\end{proof}

\medskip

\medskip

\small

\end{document}